\newtheoremstyle{plain2}{\topsep}{\topsep}%
     {\itshape}
     {}
     {\bfseries}
     {.}
     {.5em}
     {\thmnumber{(#2)}\thmname{ #1}\thmnote{ #3}}
\theoremstyle{plain2}
\newtheorem{teo}{Theorem}[section]
\newtheorem{prop}[teo]{Proposition}
\newtheorem{coro}[teo]{Corollary}
\newtheorem{lemma}[teo]{Lemma}
\newtheorem*{quest}{Question}
\newtheoremstyle{definition2}{\topsep}{\topsep}%
     {}
     {}
     {\bfseries}
     {.}
     {.5em}
     {\thmnumber{(#2)}\thmname{ #1}\thmnote{ #3}}
\theoremstyle{definition2}
\newtheorem{rem}[teo]{Remark}
\newtheorem{defi}[teo]{Definition}
\def\N{\mathbb{N}}
\def\R{\mathbb{R}}
\def\C{\mathbb{C}}
\def\b{\beta}
\def\g{\gamma}
\def\d{\delta}
\def\l{\lambda}
\def\s{\sigma}
\def\D{\Delta}
\def\O{\Omega}
\def\invD{\mathcal{L}}
\def\tl{{\tilde\lambda}}
\def\tmu{{\tilde\mu}}
\def\tS{{S_\infty}}
\def\Cah{\mathcal{C}}
\def\Kah{\mathcal{K}}
\def\Feh{\mathcal{F}}
\def\Mah{\mathcal{M}}
\def\spc{H^1_0(\O)}
\def\dist{\mathrm{dist}}
\title{\sc Convergence of minimax and\\ continuation of critical points\\ for singularly perturbed systems.}
\author{%
Benedetta Noris, Hugo Tavares, Susanna Terracini and Gianmaria Verzini}
\begin{document}

\maketitle

\begin{abstract}
We consider the system of stationary Gross--Pitaevskii equations
\[
\left\{
  \begin{array}{l}
   -\D u + u^3+\b uv^2=\l u\smallskip\\
   -\D v + v^3+\b u^2v=\mu v\smallskip\\
   u,v\in\spc,\quad u,v>0,
 \end{array}
\right.
\]
arising in the theory of Bose--Einstein condensation, and the related scalar equation
\[
 -\D w + w^3=\l w^+-\mu w^-.
\]
We address the following
\begin{quest}
Is it true that every bounded family $(u_\beta,v_\beta)$ of solutions of the system converges, as $\b\to+\infty$, up to a subsequence, to a pair $(u_\infty,v_\infty)$, where $u_\infty-v_\infty$ solves the scalar equation?
\end{quest}
We discuss this question in the case when the solutions to the system are obtained as  minimax critical points via (weak) $L^2$ Krasnoselskii genus theory. Our results, though still partial, give a strong indication of a positive answer.
\end{abstract}

\section{Introduction}

\subsection{Motivations}

The commonly proposed mathematical model for binary
mixtures of Bose--Einstein condensates is a system of two coupled
nonlinear Schr\"odinger equations, known in the literature as Gross--Pitaevskii equations, which writes
\[
\left\{
 \begin{array}{l}
  \displaystyle -\imath \partial_t\phi=\Delta \phi-V_1(x)\phi- \mu_1|\phi|^2\phi-
  \b_{12}|\psi|^2\phi\smallskip\\
  \displaystyle -\imath \partial_t\psi=\Delta \psi-V_2(x)\psi- \mu_2|\psi|^2\psi-
  \b_{21}|\phi|^2\psi\smallskip\\
  \phi,\,\psi \in H^1_0(\O;\C),
 \end{array}
\right.
\]
with $\O$ a smooth bounded domain of $\R^N$, $N=2,3$
(see \cite{clll} and references therein). Here the complex valued functions $\phi,\psi$ are the
wave functions of the two condensates, the real functions $V_i$ ($i=1,2$)
represent the trapping magnetic potentials, and the positive
constants $\b_{ij}$ and $\mu_i$  are the interspecies and the
intraspecies scattering lengths, respectively. With this choice both
the interactions between unlike particles and the interactions between
the like ones are repulsive (the so called \emph{defocusing} case, opposed
to the \emph{focusing} one, where the $\mu_i$'s are negative). When searching for solitary wave solutions
of the above system, one makes the ansatz
\[
\phi(t,x)=e^{-\imath\l t}u(x),\quad \psi(t,x)=e^{-\imath\mu t}v(x),
\]
obtaining that the real functions $u$, $v$ satisfy the system
\begin{equation}\label{eq:sys}
\left\{
 \begin{array}{l}
  S_{\b}(u,v;\l,\mu)=\left(
  \begin{array}{l}
   -\D u + u^3+\b uv^2-\l u\smallskip\\
   -\D v + v^3+\b u^2v-\mu v\smallskip\\
  \end{array}
   \right) =
  0\\
   u,v\in\spc,\quad u,v>0,
 \end{array}
\right.
\end{equation}
at least when $V_i(x)\equiv0$, $\mu_1=\mu_2=1$, and $\b_{12}=\b_{21}=\b$
(in fact, we can treat more general systems, provided
the problem is symmetric with respect to $u$ and $v$ and it is of variational type).
In this paper we are interested in the relation between suitable solutions
$(u_\b,v_\b)$ of \eqref{eq:sys}, for $\b$ large, and the
pairs\footnote{Here, as usual, $w^\pm(x)=\max\left\{\pm
w(x),0\right\}$ denote the positive and negative part of a function
$w$.} $(w^+,w^-)$, where $w$ solves
\begin{equation}\label{eq:eq}
-\D w + w^3=\l w^+-\mu w^-, \qquad w\in\spc
\end{equation}
(for suitable $\l$, $\mu$), which represents, as we shall see, a limiting problem for \eqref{eq:sys} as $\b\to+\infty$.

The standard mass conservation law gives $\int_\O u^2=m_1$,
$\int_\O v^2=m_2$. Assuming $m_{1}=m_{2}=1$ we are led to study solutions of \eqref{eq:sys} as (nonnegative) critical points
of the coercive energy functional
\begin{equation*}
J_\b(u,v)=\frac12\left(\|u\|^2+\|v\|^2\right)+\frac14\int_{\O}
\left(u^4+v^4\right)\,dx+\frac\b2\int_{\O}u^2v^2\,dx
\end{equation*}
constrained to the manifold
\[
M=\left\{(u,v)\in\spc\times\spc:\,\int_\O u^2 dx=\int_\O v^2 dx=1\right\},
\]
so that $\l$ and $\mu$ in \eqref{eq:sys} can be understood as Lagrange multipliers. The natural
limiting energy, at least for the minimization purposes, is of course the $\Gamma$--limit of the family $J_\b$, that is,
the extended valued functional defined as\footnote{In this definition, of course, one can equivalently use any $J_\b$, $0<\b<+\infty$, instead of $J_0$.}
\[
J_\infty(u,v)=\sup_{\b>0}J_\b(u,v)=
\begin{cases}
J_0(u,v) & \text{when }\int_{\O}u^2v^2\,dx=0\smallskip\\
+\infty  & \text{otherwise}
\end{cases}
\]
(for the definition of $\Gamma$--convergence we refer, for instance, to the book by Braides \cite{braides}). This
functional is far from being $C^1$, indeed it is finite only on a non smooth domain. On the other hand, when finite, we have that
\begin{equation}\label{eq:J*}
J_\infty(u,v)=J^*(u-v),\qquad\text{where }J^*(w)=\frac12\|w\|^2+\frac14\int_{\O}w^4\,dx
\end{equation}
(with $\int_\O (w^+)^2=\int_\O (w^-)^2=1$). This suggests to understand the critical points of $J_\infty$ constrained to $M$ as the (nonnegative)
pairs $(u,v)$, such that
$u\cdot v\equiv0$ and $u-v$ satisfies equation \eqref{eq:eq} (and again $\l$ and $\mu$ play the role
of Lagrange multipliers).

One can easily find the so called ground state solutions of the functionals (both for $\beta$ finite and infinite), that is, the constrained minimizers of the energies (moreover, each component of the ground state can be chosen to be nonnegative). For such solutions it is an easy exercise, at least in the defocusing case, to  prove convergence. Indeed, for any sequences
$\b_n \to +\infty$ and $(u_n,v_n)$ minimizers of $J_{\b_n}$, there exists
a pair $(u_\infty,v_\infty)$, minimizer of $J_\infty$, such that, up to subsequences,
\(
(u_n,v_n)\to(u_\infty,v_\infty)
\),
strongly in $H^1_0$ (this fact can be easily read in the framework of $\Gamma$--convergence theory).
Moreover, the convergence can be proven to be more regular, in particular $C^{0,\alpha}(\overline{\O})$, see \cite{nttv1}.
The remarkable fact is that any $(u_\infty,v_\infty)$, minimizer of $J_\infty$, corresponds to a minimizer
$w=u_\infty-v_\infty$ of the smooth functional $J^*$ (with the appropriate constraint); this provides a suitable differential extremality condition
for the minimizers of the non--smooth functional $J_\infty$, in the form of equation \eqref{eq:eq}.
\begin{teo}\label{teo:ground_states}
Let $(u_\b,v_\b)\in M$, for $\b\in(0,+\infty)$, be a minimizer of $J_\b$ constrained to $M$. Then, up to subsequences, $(u_\b,v_\b)\to(u_\infty,v_\infty)$, strongly in $H^1\cap C^{0,\alpha}$, minimizer of $J_\infty$
constrained to $M$. Moreover $u_\infty-v_\infty$ solves \eqref{eq:eq}.
\end{teo}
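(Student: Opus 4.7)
The plan is to follow the $\Gamma$--convergence philosophy already sketched above, reducing the proof to three ingredients: a uniform competitor bound, weak lower semicontinuity, and the smooth reformulation $J_\infty(u,v)=J^*(u-v)$. First I would produce uniform $H^1\times L^4$ estimates on $(u_\b,v_\b)$. Since $M$ contains admissible pairs $(\phi,\psi)$ with $\phi\psi\equiv 0$ (take, for instance, normalized first Dirichlet eigenfunctions on two disjoint balls of $\O$), the minimality of $(u_\b,v_\b)$ gives
\[
J_0(u_\b,v_\b)+\tfrac{\b}{2}\int_{\O}u_\b^2v_\b^2 \;=\; J_\b(u_\b,v_\b)\;\le\; J_\b(\phi,\psi)\;=\;J_0(\phi,\psi),
\]
which is uniform in $\b$ and already forces $\int_\O u_\b^2v_\b^2=O(1/\b)$. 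Up to a subsequence, $(u_\b,v_\b)\rightharpoonup(u_\infty,v_\infty)$ weakly in $\spc\times\spc$ and strongly in $L^p$ for $p<2^*$; the $L^2$ normalizations pass to the strong $L^2$ limit, and Fatou gives $\int_\O u_\infty^2 v_\infty^2=0$, so $u_\infty v_\infty\equiv 0$ and $(u_\infty,v_\infty)\in M$ is admissible for $J_\infty$.

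Minimality and strong $H^1$ convergence are then obtained together. For any competitor $(\phi,\psi)\in M$ with $\phi\psi\equiv 0$, weak lower semicontinuity of $J_0$ combined with the inequality above yields
\[
J_\infty(u_\infty,v_\infty)\;\le\;\liminf_{\b\to\infty}J_0(u_\b,v_\b)\;\le\;\liminf_{\b\to\infty}J_\b(u_\b,v_\b)\;\le\; J_\b(\phi,\psi)\;=\;J_\infty(\phi,\psi),
\]
so $(u_\infty,v_\infty)$ minimizes $J_\infty$ on $M$. Specializing to $(\phi,\psi)=(u_\infty,v_\infty)$ forces $J_0(u_\b,v_\b)\to J_0(u_\infty,v_\infty)$ and $\b\int_\O u_\b^2v_\b^2\to 0$; combined with strong $L^4$ convergence, this transfers to convergence of the $H^1$ norms and, via the weak limit, to strong $H^1$ convergence of the pair.

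For the Euler--Lagrange identification I would use the bijection \eqref{eq:J*} between admissible $(u,v)\in M$ for $J_\infty$ and $w\in\spc$ with $\int_\O(w^+)^2=\int_\O(w^-)^2=1$. A pointwise computation shows that $w\mapsto\int_\O(w^\pm)^2$ are $C^1$ on $\spc$ with differentials $\varphi\mapsto\pm 2\int_\O w^\pm\varphi$; at $w_\infty:=u_\infty-v_\infty$ these gradients are linearly independent since both $w_\infty^\pm$ are nonzero, and the Lagrange multiplier rule applied to the smooth functional $J^*$ produces $\l,\mu\in\R$ with $-\D w_\infty+w_\infty^3=\l w_\infty^+-\mu w_\infty^-$, which is \eqref{eq:eq}. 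The $C^{0,\alpha}$ convergence, on the other hand, does not follow from the $H^1$ argument: here one first tests \eqref{eq:sys} against $u_\b$ and $v_\b$ to bound the Lagrange multipliers $\l_\b,\mu_\b$ via the competitor inequality, derives a uniform $L^\infty$ bound by Moser iteration on the resulting subsolution inequalities $-\D u_\b\le\l_\b u_\b$ and $-\D v_\b\le\mu_\b v_\b$, and then invokes the uniform Hölder estimates for competition--diffusion systems of \cite{nttv1}. In my view this last step is the main technical hurdle, since it relies on the delicate blow-up and monotonicity analysis of that reference rather than on soft compactness arguments.
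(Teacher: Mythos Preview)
Your argument is correct and self--contained, but it is not the route the paper takes. The paper derives Theorem~\ref{teo:ground_states} as a special case of its abstract minimax machinery: one observes that $c^1_\b=\inf_{\Mah}J_\b$ and that any minimizer $(u_\b,v_\b)$ gives rise to the two--point optimal set $A_\b=\{(u_\b,v_\b),(v_\b,u_\b)\}\in\Feh_1$; Theorem~\ref{thm:main2} then yields $L^2$--convergence of the minima via convergence of optimal sets, and the upgrade to $H^1\cap C^{0,\alpha}$ is obtained exactly as in the proof of Corollary~\ref{coro:intro_last} (Brezis--Kato for the $L^\infty$ bound, then \cite{nttv1}). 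The fact that $u_\infty-v_\infty$ solves \eqref{eq:eq} is encoded in the description of $\Kah^1_\infty$ coming from Theorem~\ref{thm:main1}.

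Your direct $\Gamma$--convergence proof is precisely the ``more elementary proof'' the paper mentions but does not spell out. It has the advantage of being independent of the genus framework and of the flows $\eta_\b$, and your chain of inequalities cleanly gives both minimality of the limit and strong $H^1$ convergence in one stroke. The paper's route, by contrast, buys uniformity: the same argument covers all genus levels $k$, so ground states require no separate treatment. For the $C^{0,\alpha}$ step the two approaches coincide in substance --- your Moser iteration on $-\D u_\b\le\l_\b u_\b$ is the Brezis--Kato argument the paper carries out, and both end by invoking \cite{nttv1}.
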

We shall obtain a proof of this result, as a byproduct of a more general one, at the end of Section \ref{sec:main1_beta}. The situation becomes much more delicate when coming to the excited states, i.e. solutions that are not minimal.
The existence of such kind of solutions has been recently proved, mainly in the focusing case, in \cite{dww,nr,TV,ww}.
Moreover, in the aforementioned \cite{nttv1}, the authors have obtained  estimates in H\"older spaces, uniform with respect to $\beta$. In light of the analysis for the ground states, we formulate the following conjecture.

\noindent\textbf{Conjecture. }
\emph{Every bounded family $(u_\beta,v_\beta)$ of solutions to \eqref{eq:sys} converges, as $\b\to+\infty$, up to a subsequence, to a pair $(u_\infty,v_\infty)$, where $u_\infty-v_\infty$ solves \eqref{eq:eq}.}

Unfortunately, we are still not able to prove this conjecture. However, in this paper we establish some limiting relations between critical levels and optimal sets as $\b\to+\infty$, with respect to a common minimax variational framework (see the theorems below). When dealing with minimax critical points, a key role is played by the gradient flow; one of the main difficulties in our situation relies on the fact that we will have to compare a family of gradient flows (at $\b<+\infty$) with a limiting one, induced on the functional $J_\infty$ by $J^*$. These flows do not seem to be related, nor the family of flows seems to converge in a sufficiently strong way (see \cite{caflin2}). This prevents us to apply the recent theory of $\Gamma$--convergence of gradient flows developed in \cite{sandserfaty}.

\subsection{A class of minimax problems}

To proceed with the exposition of our main results, we need to introduce some suitable minimax framework which is admissible for the whole family of functionals. In doing this, we are inspired by a recent work by Dancer, Wei and Weth \cite{dww}, where infinitely many
critical levels are found, in the focusing case, by exploiting the Krasnoselskii genus technique (see, for instance, the book by Struwe \cite{struwe}) associated with the invariance of the problem when interchanging the role of $u$ and $v$.

In carrying on our asymptotic analysis, we shall take advantage of a strong compactness property that goes beyond the usual Palais--Smale condition; to this aim we are lead to set the genus theory in the $L^{2}$--topology. This is the main reason why we are addressing here the defocusing case: in the focusing one, indeed, the fact that the associated Nehari manifold is not $L^2$--closed seems to prevent us to perform an analogous analysis. Let us consider the involution
\[
\s: \spc\times\spc \to \spc\times\spc,\quad (u,v)\mapsto \s(u,v)=(v,u),
\]
and the class of sets
\[
\mathcal{F}_0=\left\{A\subset M:\,\begin{array}{l}
\bullet\ A \text{ is closed in the $L^2$--topology}\\
\bullet\ (u,v)\in A\implies u\geq0,v\geq0\\
\bullet\ \sigma(A)=A
\end{array}  \right\}
\]
(observe that $M$ is $L^2$--closed and that $\s(M)=M$). We can define the Krasnoselskii
$L^2$--genus in $\mathcal{F}_0$ in the following way.
\begin{defi}\label{defi:L2_genus}
Let $A\in\mathcal{F}_0$. The \emph{$L^2$--genus} of $A$, denoted by
$\g_2(A)$, is defined as
\[
\g_2(A)=\inf\left\{m\in\N:\,
\begin{array}{l}
 \text{there exists }f:A\to\R^m\setminus\{0\}\text{ such that}\\
 \quad \bullet\ f\text{ is continuous in the $L^2$--topology and}\\
 \quad \bullet\ f(\sigma(u,v))=-f(u,v)\text{ for every }(u,v)\in A
\end{array}\right\}.
\]
If no $f$ as above exists, then $\g_2(A)=+\infty$, while $\g_2(\emptyset)=0$. The set of subsets
with $L^2$--genus at least $k$ will be denoted by
\[
\mathcal{F}_k=\left\{A\in\mathcal{F}_0:\,\g_2(A)\geq k\right\}.
\]
\end{defi}
Under the previous notations we define, for $0<\b\leq+\infty$, the (candidate) critical levels
\begin{equation}\label{eq:lev_set}
c^k_\b=\inf_{A\in\mathcal{F}_k}\sup_{(u,v)\in A} J_\b(u,v).
\end{equation}
When $\b<+\infty$, the (candidate) critical set is defined in the standard way:
\[
 \mathcal{K}^k_{\b}=\left\{(u,v)\in M:
 \begin{array}{l}
   u,v\geq0,\\
   J_\b(u,v)=c^k_\b,\text{ and}\\
   \text{there exist }\l,\mu\text{ such that }S_\b(u,v;\l,\mu)=(0,0)
 \end{array}
 \right\}.
\]
Coming to the limiting problem, inspired by Theorem \ref{teo:ground_states}, we define the critical set as
\[
\mathcal{K}^k_{\infty}=\left\{(u,v)\in M:
 \begin{array}{l}
   u,v\geq0,\\
   J_\infty(u,v)=c^k_\infty,\text{ and}\\
   \text{there exist }\l,\mu\text{ such that }u-v\text{ solves \eqref{eq:eq}}
 \end{array}
 \right\}.
\]
Our first main result is the following.
\begin{teo}\label{thm:main1}
Let $k\in\N^+$ and $0<\b\leq+\infty$ be fixed. Then:
\begin{enumerate}
 \item $ \mathcal{K}^k_\b$  is non empty and compact (with respect to the $H^1_0$--topology);
 \item there exists $A^{k}_\b\in\mathcal{F}_k$ and $(u^{k}_\b,v^{k}_\b)\in A^{k}_\b\cap\mathcal{K}^k_\b$ such that
 \[
 c^k_\b=\max_{A^{k}_\b}J_\b=J_\b(u^{k}_\b,v^{k}_\b).
 \]
\end{enumerate}
\end{teo}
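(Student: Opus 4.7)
The plan is to run, for each $\b\in(0,+\infty]$, a standard Krasnoselskii-genus minimax scheme, specialized to the class $\mathcal{F}_k$ with its $L^2$-topology. The crucial compatibility observation is that, since $\spc\hookrightarrow L^2(\O)$ continuously, any $H^1_0$-continuous, $\s$-equivariant, positivity-preserving flow is automatically $L^2$-continuous and therefore maps $\mathcal{F}_k$ into itself; the non-standard topology in the genus only raises the minimax levels without obstructing the construction of deformations. A preliminary step is $0<c^k_\b<+\infty$: the upper bound follows from an explicit antipodal map $S^{k-1}\to M$ built with $2k$ bump functions in $\spc$ having pairwise disjoint supports, giving a test set in $\mathcal{F}_k$ on which $J_\b$ is bounded uniformly in $\b$ (the two components have disjoint supports, hence $\int u^2 v^2=0$); the lower bound is immediate from $J_\b\geq J_0>0$ on $M$.

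For finite $\b$, I would first verify the constrained Palais--Smale condition for $J_\b$ on $M$: PS sequences are $H^1_0$-bounded by coercivity, admit weak $H^1_0$/strong $L^4$ limits by Rellich, the Lagrange multipliers $\l_n,\mu_n$ are controlled by testing \eqref{eq:sys} against $(u_n,0)$ and $(0,v_n)$, and a standard bootstrap gives strong $H^1_0$-convergence; this same reasoning delivers the $H^1_0$-compactness of $\mathcal{K}^k_\b$. Next I would build the deformation as the negative constrained pseudo-gradient flow of $J_\b|_M$, $\s$-symmetrized (by averaging any pseudo-gradient with its $\s$-conjugate) and truncated in a small $L^2$-neighbourhood of $\mathcal{K}^k_\b$; positivity is preserved by post-composing the flow with componentwise absolute value, which is $L^2$-continuous and leaves $J_\b$ and $M$ unchanged. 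The standard minimax argument (\cite{struwe}) then yields $\mathcal{K}^k_\b\neq\emptyset$. For the attainment claim (2), take a minimizing sequence $A_n\in\mathcal{F}_k$ with $\sup_{A_n}J_\b\searrow c^k_\b$, deform it so that $J_\b$ drops strictly below $c^k_\b$ outside a prescribed small $L^2$-neighbourhood of $\mathcal{K}^k_\b$, and construct $A^k_\b$ by gluing the deformed set to a $\s$-symmetric pair $\{(u^k_\b,v^k_\b),(v^k_\b,u^k_\b)\}\subset\mathcal{K}^k_\b$, arranged so that the maximum of $J_\b$ on $A^k_\b$ is attained precisely at $(u^k_\b,v^k_\b)$.

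For $\b=+\infty$, the idea is to transport the minimax problem along the $L^2$-homeomorphism $\Psi:(u,v)\mapsto u-v$, which maps $\{(u,v)\in M:u,v\geq0,\,uv\equiv0\}$ onto $\{w\in\spc:\int_\O(w^+)^2=\int_\O(w^-)^2=1\}$ and conjugates $\s$ with $w\mapsto-w$. By \eqref{eq:J*} we have $J_\infty=J^*\circ\Psi$, and the $L^2$-closedness of the positive cone (which is why we restrict to the defocusing regime) guarantees that $\Psi$ places $\mathcal{F}_k$ in bijection with a class of $L^2$-closed, $\Z_2$-invariant subsets of the target. Since $J^*$ is $C^1$ and coercive on $\spc$, the same PS-plus-deformation scheme as above applies with the standard $\Z_2$-action, and pulling back through $\Psi$ yields assertions (1) and (2) for $\b=+\infty$.

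The main technical difficulty I anticipate is the construction of the pseudo-gradient deformation respecting simultaneously the mass constraint $M$, the positivity of both components, the $\s$-equivariance, and the $L^2$-continuity required to preserve membership in $\mathcal{F}_k$; the attainment step (2) then requires a careful gluing of the deformed minimizing set with a small symmetric neighbourhood of a chosen critical point without destroying the genus lower bound. The limit case $\b=+\infty$ adds the further subtlety that $J_\infty$ is non-smooth, which is bypassed by the $J^*$ reformulation but demands control on how the push-forward of the flow interacts with the (non-smooth) boundary of the positive cone.
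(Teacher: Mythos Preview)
Your overall architecture (genus scheme, constrained Palais--Smale, the $J^*$ reduction for $\b=+\infty$) matches the paper's, but there is a genuine gap in the deformation step. Your key compatibility claim --- that an $H^1_0$--continuous, $\s$--equivariant flow is automatically $L^2$--continuous because $\spc\hookrightarrow L^2$ --- is false in the direction you need. The embedding tells you that $H^1_0$--convergence implies $L^2$--convergence, not the converse; hence an $H^1_0$--continuous pseudo-gradient flow is in general \emph{not} $L^2$--continuous, and Proposition~\ref{prop:genus}(ii) does not apply to it. Without $L^2$--continuity you cannot conclude that $\eta(A)\in\mathcal{F}_k$, and the whole minimax machinery collapses. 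This is exactly why the paper does not use a pseudo-gradient: for $\b<+\infty$ it runs the \emph{parabolic} flow $\partial_t(U,V)=-S_\b(U,V)$ and proves $L^2$--continuity by direct energy estimates exploiting the smoothing of the heat semigroup (Corollary~\ref{coro:L^2_cont_1}); for $\b=+\infty$ it uses the constrained $H^1_0$--gradient flow of $J^*$ and obtains $L^2$--Lipschitz continuity from the regularizing effect of $(-\Delta)^{-1}$ (Remark~\ref{rem:lip_cont_s_tild}, Proposition~\ref{prop:flux_beta_infinite}). Positivity is likewise obtained from a maximum--principle argument for the parabolic flow (Lemma~\ref{lemma:flux_sign_preserving_beta_finite}), not by post-composing with $|\cdot|$.

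Your attainment argument for (2) is also different and shakier than the paper's. Rather than gluing a deformed minimizing set to a symmetric pair of critical points, the paper exploits the $L^2$--compactness of the sublevel sets (Remark~\ref{rem:compact_level_sets}) to show directly that the Kuratowski $\limsup$ of any minimizing sequence $(A_n)_n\subset\mathcal{F}_k$ is again in $\mathcal{F}_k$ (Proposition~\ref{prop:genus}(iv)) and, by lower semicontinuity, realises the level $c^k_\b$ (Proposition~\ref{prop:abstr_1}); then a short argument with the deformation $\eta_\b$ and the (PS) condition shows this optimal set must meet $\mathcal{K}^k_\b$ (Theorem~\ref{thm:abstr_1}). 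This route avoids any gluing and gives (1) and (2) simultaneously; the $L^2$--topology is precisely what makes the $\limsup$ construction work, and is the structural reason the authors insist on the weak genus throughout.
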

As in the usual genus theory, one may also prove that, if $c^k_\b$ is the same for different $k$'s, then the genus
of $\Kah_\b^k$ is large. This, together with suitable conditions which allow to avoid fixed points of $\sigma$ (namely $\beta$ large enough, see Lemma \ref{lem:no_fix_point}), provides the existence of many distinct critical points of $J_\b$.

\subsection{Limits as $\b\to+\infty$.}

Since the same variational argument applies both to the $\b$--finite and to the limiting case, the next step is to compare the limiting behaviour of the variational structure as $\b\to+\infty$ with the actual behaviour at
$\b=+\infty$. This involves the study of the critical levels, of the optimal sets (in the sense of Theorem
\ref{thm:main1}) and, finally,  of the critical sets. Regarding the first two questions, we have full convergence.
\begin{teo}\label{thm:main2}
Let $k\in\N^+$ be fixed. As $\b\to+\infty$ we have
\begin{enumerate}
 \item $c^{k}_{\b}\to c^{k}_{\infty}$;
 \item if $A^{k}_n$ is any optimal set for $c^{k}_{\b_n}$, and $\b_n\to+\infty$,
       then the set $\limsup_n A^{k}_n$ is optimal for $c^{k}_{\infty}$ (the limit is intended in the
       $L^2$--sense).
\end{enumerate}
\end{teo}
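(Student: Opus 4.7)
The plan is to deduce both statements from a single sequence-extraction argument. The easier half, $c^k_\b\le c^k_\infty$ for every $\b$, is immediate from $J_\b\le J_\infty=\sup_{\b'>0}J_{\b'}$: taking the supremum over any $A\in\mathcal{F}_k$ and then the infimum over $A$ gives the inequality. What remains is to establish the matching lower bound $\liminf_{\b\to+\infty} c^k_\b\ge c^k_\infty$ along with the optimality of $\limsup_n A^k_n$.

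Fix $\b_n\to+\infty$ and, using Theorem \ref{thm:main1}, select optimal $A^k_n\in\mathcal{F}_k$ realizing $c^k_{\b_n}$. Since $J_{\b_n}\ge J_0$ and $c^k_{\b_n}\le c^k_\infty$, every element of $\bigcup_n A^k_n$ is uniformly bounded in $H^1_0$, and so the union is relatively compact in $L^2$ by Rellich. By the Blaschke selection principle, extract a subsequence (not relabeled) such that $A^k_n$ converges in $L^2$-Hausdorff distance to a nonempty compact $A^*\subset M$. The properties defining $\mathcal{F}_0$ pass to $A^*$: $\s$-symmetry by continuity of $\s$ in $L^2$, $L^2$-closedness by compactness, nonnegativity by a.e.\ subsequential convergence, and $A^*\subset M$ because $M$ is $L^2$-closed.

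The crucial (and, I expect, hardest) step is to show $\gamma_2(A^*)\ge k$. Assume by contradiction the existence of an odd, $L^2$-continuous $f:A^*\to\R^{k-1}\setminus\{0\}$. Since $A^*$ is closed in the metric space $L^2(\O)\times L^2(\O)$, Tietze/Dugundji produces a continuous extension $\tilde f:L^2\times L^2\to\R^{k-1}$, which we symmetrize by setting
\[
g(u,v)=\tfrac12\bigl(\tilde f(u,v)-\tilde f(v,u)\bigr),
\]
still continuous, odd, and coinciding with $f$ on $A^*$. By compactness of $A^*$, $|g|\ge\rho>0$ on some $L^2$-neighborhood $N$ of $A^*$; $L^2$-Hausdorff convergence forces $A^k_n\subset N$ for large $n$, so $g|_{A^k_n}$ would witness $\gamma_2(A^k_n)\le k-1$, contradicting $A^k_n\in\mathcal{F}_k$. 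The indispensable ingredient here is that the genus is formulated in $L^2$: relative compactness of the approximating family and the continuous extension both take place in the same topology, which would fail if the genus were phrased in $H^1_0$.

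To pass to the limit in the energy, take $(u,v)\in A^*$ and $(u_n,v_n)\in A^k_n$ with $(u_n,v_n)\to(u,v)$ in $L^2$; along a further subsequence $(u_n,v_n)\rightharpoonup(u,v)$ weakly in $H^1_0$ and strongly in $L^4$ (Rellich, $N\le 3$). From $\tfrac{\b_n}{2}\int_{\O}u_n^2v_n^2\le c^k_{\b_n}\le c^k_\infty$ and $\b_n\to+\infty$, the strong $L^4$-convergence gives $\int_{\O}u^2v^2=0$, whence $J_\infty(u,v)=J_0(u,v)$, and weak $H^1_0$-lower semicontinuity together with the $L^4$-convergence yields
\[
J_\infty(u,v)\le\liminf_n J_0(u_n,v_n)\le\liminf_n c^k_{\b_n}.
\]
Taking sup over $A^*$ and using $A^*\in\mathcal{F}_k$ gives $c^k_\infty\le\sup_{A^*}J_\infty\le\liminf_n c^k_{\b_n}$, so $c^k_{\b_n}\to c^k_\infty$ along the chosen subsequence, and hence along any sequence. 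For part 2, the same pointwise estimate applied to each $(u,v)\in\limsup_n A^k_n$ (extracting an $L^2$-convergent subsequence from some $A^k_{n_j}$) produces $\sup_{\limsup_n A^k_n}J_\infty\le c^k_\infty$; monotonicity of the genus combined with $A^*\subset\limsup_n A^k_n$ gives $\gamma_2(\limsup_n A^k_n)\ge k$, so $\limsup_n A^k_n$ belongs to $\mathcal{F}_k$ and realizes $c^k_\infty$.
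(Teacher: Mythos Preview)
Your argument is correct. It is also close in spirit to the paper's, but the packaging is different enough to be worth a short comparison.

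The paper factors the proof through an abstract minimax framework (its Section~2): it isolates a monotonicity hypothesis~(J), a compactness hypothesis~($\mathcal{F}2'$) on the admissible class, and lower semicontinuity of each $J_\b$ in~$L^2$, and shows in general (its Theorem~2.6) that these alone give $c_\b\to c_\infty$ together with optimality of $\limsup_n A_n$. The concrete work is then just checking the hypotheses. In particular, the genus stability you prove by Blaschke selection plus an explicit Tietze/Dugundji extension is handled in the paper as Proposition~3.1(iv): in an $L^2$--compact ambient set one shows directly (by a one--line contradiction using sequential compactness) that $A_n\subset N_\delta(\limsup_n A_n)$ for all large~$n$, and then invokes the standard neighborhood stability of the genus (its Proposition~3.1(iii)). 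This avoids extracting a Hausdorff--convergent subsequence and the subsequent step of relating $A^*$ to the full $\limsup$.

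What each approach buys: the paper's route is slightly more economical for part~2 (no subsequence bookkeeping, no separate transfer from $A^*$ to $\limsup_n A^k_n$) and yields a reusable abstract statement. Your route is more self--contained, since you unpack the Tietze extension that lies behind the neighborhood stability of the genus rather than citing it; it also makes explicit why the $L^2$ formulation of the genus is essential. Two minor remarks: you invoke Theorem~\ref{thm:main1} for the existence of optimal sets, but all you actually need is the elementary compactness/l.s.c.\ argument (the paper's Proposition~2.1), so there is no circularity; and when passing from the extracted subsequence back to ``any sequence'' it is cleanest to use the monotonicity of $\b\mapsto c^k_\b$ (so the full limit exists) rather than a pure subsequence argument.
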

It is worthwhile to notice that, in general, the convergence of the critical levels is a delicate fact to prove (for instance, it remains an open problem in \cite{nr}). Coming to the convergence of the critical sets, were our conjecture true, we would obtain that
\[
\Kah^k_\infty \supset\limsup_{\b\to+\infty} \Kah^k_\b :
=\left\{\begin{array}{l}(u,v):\text{ $\exists$ sequences $(u_n,v_n)\in M$, nonnegative, $\b_n\to+\infty$ with}\\
   \qquad \bullet\ (u_n,v_n)\to (u,v)\text{ in }L^2,\\
   \qquad \bullet\ J_{\b_n}(u_n,v_n) = c^k_\b\to c_\infty^k,\text{ and}\\
   \qquad \bullet\ S_{\b_n}(u_n,v_n)= (0,0)
 \end{array}
 \right\}.
\]
As we already mentioned, we obtain a weaker result. As a matter of fact, it is more likely, in the framework of Bose--Einstein condensation theory, to prove convergence of the critical levels rather than convergence of the critical points (see e.g. \cite{aftahelf}). The same problem arises in the framework of $\Gamma$--convergence as pointed out in the recent paper \cite{jerrstern}. Here the authors prove, under strong nondegeneracy assumptions, that the existence of critical points for a $\Gamma$--limit may provide the existence of critical points for the approximating functionals, but in general there is no reason why the critical points should be close. Our result is the following.
\begin{teo}\label{thm:main3}
Let
\[
\Kah_*^k=\left\{\begin{array}{l}(u,v):\text{ $\exists$ sequences $(u_n,v_n)\in M$, nonnegative, $\b_n\to+\infty$ with}\\
   \qquad \bullet\ (u_n,v_n)\to (u,v)\text{ in }L^2,\\
   \qquad \bullet\ J_{\b_n}(u_n,v_n) \to c^k_\infty,\text{ and}\\
   \qquad \bullet\ S_{\b_n}(u_n,v_n)\to (0,0)\text{ in }L^2
 \end{array}
 \right\}.
\]
Then
\[
\Kah_*^k\cap\Kah_\infty^k\quad\text{is not empty}.
\]
\end{teo}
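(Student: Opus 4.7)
The natural candidates are the critical points supplied by Theorem~\ref{thm:main1}(2). For $\beta_n\to +\infty$, choose $(u_n,v_n) = (u^k_{\beta_n},v^k_{\beta_n})\in\Kah^k_{\beta_n}$ with $J_{\beta_n}(u_n,v_n)=c^k_{\beta_n}$, and let $\lambda_n,\mu_n$ be the associated Lagrange multipliers so that $S_{\beta_n}(u_n,v_n;\lambda_n,\mu_n)=0$. By Theorem~\ref{thm:main2}(1), $c^k_{\beta_n}\to c^k_\infty$, hence $(u_n,v_n)$ is uniformly bounded in $\spc\times\spc$; Rellich's theorem then provides a subsequence (unrelabeled) with $(u_n,v_n)\rightharpoonup(u_\infty,v_\infty)$ weakly in $\spc$ and strongly in $L^4$, with $u_\infty,v_\infty\ge 0$ and $(u_\infty,v_\infty)\in M$. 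Testing each equation with $u_n$ or $v_n$ bounds $\lambda_n,\mu_n$, so we may also assume $\lambda_n\to\lambda$, $\mu_n\to\mu$. Since $\beta_n\int_\Omega u_n^2v_n^2\le 2J_{\beta_n}(u_n,v_n)$ is bounded, $L^4$--convergence forces $u_\infty v_\infty\equiv 0$ a.e. The $\Kah_*^k$--membership is immediate from the construction.

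To place $(u_\infty,v_\infty)$ in $\Kah_\infty^k$, the plan is to invoke the uniform $C^{0,\alpha}(\overline\Omega)$ estimates for bounded-energy solutions of \eqref{eq:sys} established in~\cite{nttv1}. These upgrade the subsequential convergence to $C^{0,\alpha}(\overline\Omega)$ and, by the standard segregation analysis of that work, identify $w_\infty:=u_\infty-v_\infty$ as a weak solution of \eqref{eq:eq} with multipliers $\lambda,\mu$. This gives the critical-point part of the definition of $\Kah_\infty^k$.

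For the level identity $J_\infty(u_\infty,v_\infty)=c^k_\infty$ the plan is to compare two energy identities obtained by testing each equation against its solution. Testing \eqref{eq:eq} against $w_\infty$ and using $u_\infty v_\infty=0$ (whence $\int_\Omega\nabla u_\infty\cdot\nabla v_\infty=0$ and $w_\infty^4=u_\infty^4+v_\infty^4$) yields
\[
\|u_\infty\|^2+\|v_\infty\|^2+\int_\Omega(u_\infty^4+v_\infty^4)=\lambda+\mu.
\]
Summing the two tested equations for $(u_n,v_n)$ gives
\[
\|u_n\|^2+\|v_n\|^2+\int_\Omega(u_n^4+v_n^4)+2\beta_n\int_\Omega u_n^2v_n^2=\lambda_n+\mu_n.
\]
Passing to the limit in the second identity, using strong $L^4$--convergence of the quartic terms, weak-$\spc$ lower semicontinuity of the squared norms, and $\beta_n\int_\Omega u_n^2v_n^2\ge 0$, one deduces simultaneously $\|u_n\|^2+\|v_n\|^2\to\|u_\infty\|^2+\|v_\infty\|^2$ and $\beta_n\int_\Omega u_n^2v_n^2\to 0$. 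This upgrades the weak convergence to strong convergence in $\spc$ and makes the coupling term in $J_{\beta_n}$ vanish, so $J_{\beta_n}(u_n,v_n)\to J_0(u_\infty,v_\infty)=J_\infty(u_\infty,v_\infty)$, which by Theorem~\ref{thm:main2}(1) equals $c^k_\infty$.

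The decisive difficulty is the PDE passage to the limit: the term $\beta_n u_nv_n^2$ admits no useful weak limit as $\beta_n\to+\infty$, and its control relies on the H\"older-type regularity theory for the singularly perturbed system developed in \cite{nttv1}. It is precisely this ingredient, foreign to the minimax/genus framework itself, that is used to identify the limit equation and therefore to recover the critical-point structure at~$\infty$.
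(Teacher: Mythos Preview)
Your argument has a genuine gap at exactly the point the paper identifies as open. You write that the uniform $C^{0,\alpha}$ estimates of \cite{nttv1}, ``by the standard segregation analysis of that work, identify $w_\infty:=u_\infty-v_\infty$ as a weak solution of \eqref{eq:eq}''. But this passage to the limiting equation is precisely the \emph{Conjecture} stated in the Introduction, which the authors explicitly say they cannot prove. The results of \cite{nttv1} give uniform H\"older bounds and hence $C^{0,\alpha}$ (and $H^1$) compactness; they do \emph{not}, at the time of this paper, show that $-\Delta w_\infty+w_\infty^3=\lambda w_\infty^+-\mu w_\infty^-$ holds weakly across the free boundary $\{u_\infty=v_\infty=0\}$. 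The obstruction is exactly the term you flag: $\beta_n u_n v_n^2$ has no identified weak limit, and H\"older control alone does not rule out a singular contribution of $-\Delta w_\infty$ supported on the interface. Your energy-level argument is fine once the limit equation is granted, but it rests on the same unproved identity (you test \eqref{eq:eq} against $w_\infty$ to get $\|u_\infty\|^2+\|v_\infty\|^2+\int(u_\infty^4+v_\infty^4)=\lambda+\mu$). In short, your proposal would prove $\limsup_\beta\Kah_\beta^k\subset\Kah_\infty^k$, which is strictly stronger than Theorem~\ref{thm:main3} and is exactly what the paper leaves open.

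The paper's proof proceeds in the opposite direction and avoids the limit passage in the PDE entirely. It works through the abstract scheme of Section~\ref{sec:abstrac}: take optimal sets $A_n$ for $J_{\beta_n}$ and push them by the parabolic flows, $B_n=\eta_{\beta_n}(A_n)$; by Theorem~\ref{thm:abstr_2}, $\limsup_n B_n$ is optimal for $J_\infty$ at $c_\infty^k$. The element of $\Kah_\infty^k$ is then produced \emph{at $\beta=\infty$}, via Theorem~\ref{thm:abstr_b} applied with the gradient flow $\eta_\infty$ of the smooth functional $J^*$ (so the limiting equation \eqref{eq:eq} is obtained from a genuine variational problem, not from a singular limit). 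The uniform Palais--Smale condition (UPS) then forces the preimages $x_n\in A_n$ and their images $\eta_{\beta_n}(x_n)$ to converge to the \emph{same} point $\bar x$, placing $\bar x$ simultaneously in $\Kah_\infty^k$ and in $\Cah_*$. Finally one checks $\Cah_*\subset\Kah_*^k$ by picking, along each flow line, a time $t\in[0,1]$ at which $|S_{\beta_n}|_2\to0$. Note the price: the approximating pairs are only \emph{approximate} solutions ($S_{\beta_n}\to0$ in $L^2$, cf.\ Corollary~\ref{coro:intro_last}), not exact critical points as in your plan.
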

Using the uniform H\"older bounds obtained in \cite{nttv1}, we obtain that the $L^2$--convergences in the definitions of $\limsup_\b \Kah_\b^k$, $\Kah^k_*$, are in fact strong in $H^1\cap C^{0,\alpha}$. In particular this implies, in the direction of our conjecture, the following result.
\begin{coro}\label{coro:intro_last}
For every integer $k$ there exist pairs $(u_\infty,v_\infty)$, $(\l_\infty,\mu_\infty)$ satisfying
\[
-\D (u_\infty-v_\infty) + (u_\infty-v_\infty)^3 + \l_\infty u_\infty - \mu_\infty v_\infty=0,
\]
at level $c^k_\infty$, and (sub)sequences $(u_\b,v_\b)$, $(f_\b,g_\b)$, $(\l_\infty,\mu_\infty)$ satisfying
\[
\left\{
   \begin{array}{l}
   -\D u_\b + u_\b^3+\b u_\b v_\b^2-\l_\b u_\b=h_\b\smallskip\\
   -\D v_\b + v_\b^3+\b u_\b^2v_\b-\mu_\b v_\b=k_\b\smallskip\\
   u_\b,v_\b\in\spc,\quad u,v>0,
 \end{array}
 \right.
 \]
such that $(\l_\b,\mu_\b)\to(\l_\infty,\mu_\infty)$,
\[
(h_\b,k_\b)\to(0,0)\text{ in }L^2,\quad\text{ and }\quad (u_\b,v_\b)\to(u_\infty,v_\infty)\text{ in }H^1\cap C^{0,\alpha}.
\]
\end{coro}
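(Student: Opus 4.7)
The strategy is to apply Theorem \ref{thm:main3} and then use the uniform H\"older bounds from \cite{nttv1} to upgrade the $L^2$-compactness to $H^1\cap C^{0,\alpha}$-compactness, at which point the Lagrange multipliers can be recovered by simply testing the equations against $u$ and $v$. First, apply Theorem \ref{thm:main3} to obtain a pair $(u_\infty,v_\infty)\in\Kah_*^k\cap\Kah_\infty^k$. Membership in $\Kah_\infty^k$ furnishes multipliers $(\l_\infty,\mu_\infty)$ such that $w:=u_\infty-v_\infty$ solves \eqref{eq:eq} at the level $c_\infty^k$, while membership in $\Kah_*^k$ provides sequences $\b_n\to+\infty$, nonnegative $(u_n,v_n)\in M$, and multipliers $\l_n,\mu_n$ with $(u_n,v_n)\to(u_\infty,v_\infty)$ in $L^2$, $J_{\b_n}(u_n,v_n)\to c_\infty^k$, and $(h_n,k_n):=S_{\b_n}(u_n,v_n;\l_n,\mu_n)\to(0,0)$ in $L^2$.

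By \cite{nttv1} the family $(u_n,v_n)$ is bounded in $C^{0,\alpha}(\overline{\O})$ for some $\alpha>0$; Arzel\`a--Ascoli combined with the $L^2$ convergence then gives, along a subsequence, convergence in $C^{0,\alpha'}$ for every $\alpha'<\alpha$, and hence in $L^p$ for every $p<\infty$. In particular $\int(u_n^4+v_n^4)\to\int(u_\infty^4+v_\infty^4)$. The key step is to show that the penalization $\b_n\int u_n^2v_n^2$ vanishes in the limit. Writing
\[
2J_{\b_n}(u_n,v_n)=\|u_n\|^2+\|v_n\|^2+\tfrac{1}{2}\int(u_n^4+v_n^4)+\b_n\int u_n^2v_n^2
\]
and recalling that $J_\infty(u_\infty,v_\infty)=c_\infty^k$ forces $u_\infty v_\infty\equiv 0$, one has $2c_\infty^k=\|u_\infty\|^2+\|v_\infty\|^2+\tfrac12\int(u_\infty^4+v_\infty^4)$. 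Since the functional is coercive, $(u_n,v_n)$ is also bounded in $H^1_0$, hence converges weakly (along a subsequence) to $(u_\infty,v_\infty)$. Weak lower semicontinuity gives $\liminf\|u_n\|^2\geq\|u_\infty\|^2$ and similarly for $v_n$, and the penalization term is nonnegative; putting the energy identity together with the $L^4$ convergence forces all three nonnegative quantities $\b_n\int u_n^2v_n^2$, $\|u_n\|^2-\|u_\infty\|^2$ and $\|v_n\|^2-\|v_\infty\|^2$ to vanish. Combined with weak convergence, this yields strong $H^1_0$-convergence.

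Finally, testing the first equation against $u_n$ and using $\int u_n^2=1$,
\[
\l_n=\|u_n\|^2+\int u_n^4+\b_n\int u_n^2v_n^2-\int h_n u_n\longrightarrow \|u_\infty\|^2+\int u_\infty^4,
\]
and analogously $\mu_n\to\|v_\infty\|^2+\int v_\infty^4$; testing \eqref{eq:eq} against $w^+=u_\infty$ and $w^-=v_\infty$ shows that these limits coincide with the multipliers $\l_\infty,\mu_\infty$ associated with the scalar equation. Renaming $(u_n,v_n)$, $(h_n,k_n)$, $(\l_n,\mu_n)$ as the desired (sub)sequences completes the proof. The only delicate point is the step $\b_n\int u_n^2v_n^2\to 0$: this is not accessible from $L^2$-convergence alone, and it is precisely the uniform H\"older bounds of \cite{nttv1} which provide the $L^4$-compactness needed for the energy bookkeeping to close up.
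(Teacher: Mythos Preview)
Your argument has one genuine gap: you invoke the uniform H\"older bounds of \cite{nttv1} directly, but Theorem~1.4 in \cite{nttv1} requires as input a uniform $L^\infty$ bound on the family $(u_n,v_n)$. A priori you only have $(u_n,v_n)$ bounded in $H^1_0$ (from the energy bound), which in dimension $N=3$ only gives $L^6$. The paper fills this gap by a Brezis--Kato bootstrap: using $-\Delta u_n\leq \lambda(u_n,v_n)u_n+h_n$ (valid since $u_n\geq0$ and the cubic and coupling terms have the right sign), one tests with $u_n^{1+\delta}$ and iterates $L^{2+2\delta}\to L^{6+3\delta}$; the multiplicative constants are summable because $\delta(k)$ grows geometrically. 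Only after this $L^\infty$ bound is in hand can \cite{nttv1} be applied.

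Once the H\"older bounds are legitimately available, your route diverges somewhat from the paper's. The paper simply quotes \cite{nttv1} for the full $H^1\cap C^{0,\alpha}$ convergence, whereas you use the H\"older bounds only to get $L^4$-compactness and then recover $H^1$-convergence and $\beta_n\int u_n^2v_n^2\to0$ by an energy-balance argument (weak lower semicontinuity of the Dirichlet terms plus convergence of the $L^4$ terms forces equality throughout). This is a valid alternative and has the merit of also giving the vanishing of the penalization explicitly. Your treatment of the Lagrange multipliers---testing the approximate equations against $u_n,v_n$ and the limit equation against $w^\pm$---is correct and more detailed than what the paper spells out. So the overall strategy is sound; you just need to insert the Brezis--Kato step before appealing to \cite{nttv1}.
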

The paper is structured as follows. In Section \ref{sec:abstrac} we present an abstract framework of variational type; we introduce a family of functionals enjoying suitable properties and perform an asymptotic analysis. Section \ref{sec:conv_lev} is devoted to fit the problem that we just presented into the abstract setting; this immediately provides the convergence of the critical levels and of the optimal sets. Finally, in Section \ref{sec:main1_beta}, we conclude the proof of the main results: we address existence and asymptotics of the critical points, leaving to Section \ref{sec:construction_fluxes} the technical details about the flows used in the deformation lemmas.
\textbf{Notations.} In the following $\|u\|^2=\int_\O|\nabla u|^2\,dx$, $|u|_p^p=\int_\O u^p\,dx$ (sometimes it will also denote the vectorial norm). We will
refer to the topology induced on $\spc\times\spc$ by the $L^2(\O)\times L^2(\O)$ norm as the ``$L^2$--topology'' (and we shall denote by $\langle\cdot,\cdot\rangle_2$, $\dist_2$ the associated scalar product and distance respectively). On the
other hand, we will denote the usual topology on $\spc\times\spc$ the ``$H^1_0$--topology''. Finally, recall that,
for a sequence of sets $(A_n)_n$,
\[
x\in\limsup_nA_n \iff \text{for some }n_k\to+\infty\text{ there exist }x_{n_k}\in A_{n_k}\text{ such that }x_{n_k}\to x.\footnote{This is the limit superior in the framework of the Kuratowski convergence.}
\]
%
\section{Topological setting of a class of minimax principles}\label{sec:abstrac}
In this section we will introduce an abstract setting of min--max type in order to obtain critical values (in a suitable sense) of a given functional. Our aim is to consider a class of functionals, each of which fitting in such a setting, and to perform an asymptotic analysis of the variational structure. The asymptotic convergence requires some additional compactness, in the form of assumptions ($\Feh$2), ($\Feh$2') below. Later on, when applying these results, this will be achieved by means of weakening the topology; the price to pay will be a loss of regularity of the functional involved. For this reason, with respect to the usual variational schemes, our main address is to work with functionals that are only lower semi-continuous.

Let $(\Mah,\dist)$ be a metric space and let us consider a set of subsets of $\Mah$,
$\Feh\subset 2^\Mah$. Given a lower semi-continuous functional $J:\Mah\to\R\cup\{+\infty\}$, we define the min--max level
\[
c=\inf_{A\in\Feh} \sup_{x\in A} J(x),
\]
and make the following assumptions:
\begin{itemize}
\item[($\Feh$1)] $A$ is closed in $\Mah$ for every $A\in \Feh$;
\item[($\Feh$2)] there exists $c'>c$ such that for any given $(A_n)_n\subset\Feh$ with $A_n\subset \Mah^{c'}$ for every $n$, it holds $\limsup_n A_n\in \Feh$,
\end{itemize}
where
\[
\Mah^{c'}=\left\{x\in\Mah:\,J(x)\leq c'\right\}.
\]

Moreover from now on we will suppose that $c\in \R$, which in particular implies that $\Feh\neq \emptyset$ and $\emptyset\notin \Feh$. A first consequence of the compactness assumption ($\Feh$2) is the existence of an optimal set of the minimax procedure.
\begin{prop}\label{prop:abstr_1}
Let $J:\Mah\to\R\cup\{+\infty\}$ be a lower semi-continuous functional, assume ($\Feh$2) and suppose moreover that $c\in \R$. Then there exists $\bar A\in\Feh$ such that $\sup_{\bar A} J=c$. In this situation, we will say that $\bar A$ is optimal for $J$ at $c$.
\end{prop}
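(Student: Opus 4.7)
The statement is essentially a direct exploitation of the compactness assumption $(\Feh 2)$ combined with the lower semi--continuity of $J$. My plan is a standard minimizing-sequence argument: extract an almost-optimal sequence of admissible sets, apply $(\Feh 2)$ to pass to the limit (in the Kuratowski sense), and use lsc to bound $J$ on the limit set from above by $c$.

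\textbf{Step 1: extracting a good minimizing sequence.} Since $c \in \R$ and $c = \inf_{A \in \Feh}\sup_A J$, choose $(A_n)_n \subset \Feh$ with $\sup_{A_n} J \to c$. Let $c' > c$ be the value provided by $(\Feh 2)$. Because $\sup_{A_n} J \to c < c'$, we have $\sup_{A_n} J \le c'$ for all sufficiently large $n$; discarding finitely many terms, we may assume $A_n \subset \Mah^{c'}$ for every $n$.

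\textbf{Step 2: producing the candidate optimal set.} Define $\bar A := \limsup_n A_n$ in the Kuratowski sense, as recalled in the Notations. By $(\Feh 2)$ applied to our sequence, $\bar A \in \Feh$. In particular $\bar A \neq \emptyset$ (since $\emptyset \notin \Feh$), and $\sup_{\bar A} J \ge c$ by definition of $c$ as an infimum over $\Feh$.

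\textbf{Step 3: the upper bound via lower semi--continuity.} Fix any $x \in \bar A$. By the Kuratowski definition there exist $n_k \to +\infty$ and points $x_{n_k} \in A_{n_k}$ with $x_{n_k} \to x$ in $\Mah$. Since $J$ is lower semi--continuous,
\[
J(x) \le \liminf_k J(x_{n_k}) \le \liminf_k \sup_{A_{n_k}} J = c.
\]
Taking the supremum over $x \in \bar A$ yields $\sup_{\bar A} J \le c$, and combined with Step 2 we get $\sup_{\bar A} J = c$, so $\bar A$ is optimal for $J$ at $c$.

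\textbf{Main obstacle.} The argument is short and the only real subtlety is checking that the compactness hypothesis is applied under the correct sublevel constraint: one needs $\sup_{A_n} J \le c'$ in order to invoke $(\Feh 2)$, which is why the assumption is phrased with a strict inequality $c' > c$ rather than just $c' = c$. Once this is noted, the rest is a routine combination of the definition of Kuratowski limit superior and the sequential characterization of lsc functionals.
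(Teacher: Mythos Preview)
Your proof is correct and follows essentially the same approach as the paper's own proof: choose a minimizing sequence of admissible sets, pass to the Kuratowski $\limsup$, invoke $(\Feh 2)$ to stay in $\Feh$, and use lower semi--continuity to bound $J$ on the limit set by $c$. If anything, you are slightly more explicit than the paper in verifying the sublevel constraint $A_n\subset\Mah^{c'}$ before applying $(\Feh 2)$.
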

\begin{proof}
For every $n\in \N$ let $A_n\in\Feh$ be such that
\[
\sup_{A_n}J\leq c+\frac{1}{n}
\]
and consider $\bar A:=\limsup_n A_n$. On one hand $\bar A\in  \Feh$ by assumption ($\Feh$2) which provides $\sup_{\bar A}J\geq c$. On the other hand, by the definition of $\limsup$, for any $x\in \bar A$ there exists a sequence $(x_n)_n$, $x_n\in A_n$, such that, up to a subsequence, $x_n\to x$. But the lower semi-continuity implies
\[
J(x)\leq\liminf_n J(x_n)\leq \liminf_n \left(\sup_{A_n}J \right)\leq c,
\]
and the proposition follows by taking the supremum in $x\in\bar A$.
\end{proof}

Due to the lack of regularity of the functional it is not obvious what should be understood as critical set. We will give a very general definition of critical set at level $c$ by means of a ``deformation'', defined in some sub-level of $J$, under which the functional decreases. To this aim we consider, for some $c'>c$, a map $\eta: \Mah^{c'}\to\Mah^{c'}$ such that
\begin{itemize}
 \item[($\eta$1)]  $\eta(A)\in\Feh$ whenever $A\in\Feh$, $A\subset\Mah^{c'}$;
 \item[($\eta$2)] $J(\eta(x))\leq J(x)$, for every $x\in\Mah^{c'}$.
\end{itemize}
We define the critical set of $J$ (relative to $\eta$) at level $c$ as
\[
\Kah_c=\left\{x\in\Mah:\,J(x)=J(\eta(x))=c\right\}
\]
(notice that $x\in\Mah^c$ and hence $\eta(x)$ is well defined). We remark that the previous definition depends on the choice of $\eta$. In a quite standard way, some more compactness is needed in the form of a Palais--Smale type assumption.
\begin{defi}\label{def:palais_smale_abstrac}
We say that the pair ($J,\eta$) satisfies $(PS)_c$ if for any given sequence $(x_n)_n\subset\Mah$ such that $J(x_n)\to c$, $J(\eta(x_n))\to c$, there exists $\bar x\in \Kah_c$ such that, up to a subsequence, $x_n\to\bar x$ (as above, $\eta(x_n)$ is well defined for $n$ sufficiently large).
\end{defi}

\begin{rem}\label{rem:J_const=>eta_fix}
Incidentally we observe that, if in (PS$)_c$ one would require $\bar x$ to be also the limit of $\eta(x_n)$ (we do not assume it in this section, but it will turn out to be true in the subsequent application), as a
consequence $\Kah_c$ would coincide with the set of the fixed points of $\eta$ at level $c$, providing an alternative definition -- probably more intuitive -- of ``critical set'' (relative to $\eta$).
\end{rem}

As usual, (PS$)_c$ immediately implies the compactness of $\Kah_c$. This assumption also implies the fact that every optimal set for $J$ at level $c$ (recall Proposition \ref{prop:abstr_1}) intersects $\Kah_c$ (which in particular is non empty). More precisely
\begin{teo}\label{thm:abstr_1}
Let $J:\Mah\to\R\cup\{+\infty\}$ be a lower semi-continuous functional, assume ($\Feh 1$) and ($\Feh 2$) and let $\eta:\Mah^{c'}\rightarrow \Mah^{c'}$ be a map such that ($\eta1$) and ($\eta2$) hold. Suppose moreover that $(J,\eta)$ verify $(PS)_c$ and that $c\in\R$. Then for every $A\in\Feh$ such that $\sup_{A} J=c$ there exists $\bar x\in A\cap\Kah_c$. In particular, $K_c$ is non empty.
\end{teo}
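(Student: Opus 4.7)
The plan is to argue by constructing a sequence in $A$ on which both $J$ and $J\circ\eta$ approach $c$, so that the Palais--Smale assumption delivers a limit point in $\Kah_c$, and the closedness of $A$ then places this limit inside $A$.

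First, I observe that $A\subset\Mah^c\subset\Mah^{c'}$, so $\eta$ is defined on all of $A$ and by $(\eta1)$ we have $\eta(A)\in\Feh$. Then from the definition of $c$ as an infimum over $\Feh$ we obtain $\sup_{\eta(A)}J\geq c$, while $(\eta2)$ together with $\sup_A J=c$ gives $\sup_{\eta(A)}J\leq c$. Hence $\sup_{\eta(A)}J=c$, and I may pick a sequence $(x_n)_n\subset A$ with $J(\eta(x_n))\to c$.

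Next, I use $(\eta2)$ in the reverse direction: since $J(x_n)\geq J(\eta(x_n))$ and $J(x_n)\leq\sup_A J=c$, the sandwich forces $J(x_n)\to c$ as well. So $(x_n)_n$ is a sequence on which both $J$ and $J\circ\eta$ tend to $c$, which is exactly the hypothesis needed to invoke $(PS)_c$. The Palais--Smale condition yields $\bar x\in\Kah_c$ and a subsequence $x_{n_k}\to\bar x$. Finally, since $A$ is closed in $\Mah$ by $(\Feh1)$ and the whole subsequence lies in $A$, we conclude $\bar x\in A\cap\Kah_c$, which in particular gives $\Kah_c\neq\emptyset$.

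I do not foresee any real obstacle here: the argument is a clean bookkeeping of $(\eta1)$, $(\eta2)$, $(PS)_c$ and $(\Feh1)$. The only subtle point, which might otherwise be missed, is that one should not start from a maximizing sequence for $J$ alone -- such a sequence need not satisfy $J(\eta(x_n))\to c$ -- but rather from a maximizing sequence for $J\circ\eta$ on $A$, and then use $(\eta2)$ to upgrade it into a sequence maximizing $J$ as well. The existence of such a sequence is guaranteed by the identity $\sup_{\eta(A)}J=c$ established in the first step, which is the only place where both $(\eta1)$ (to ensure $\eta(A)\in\Feh$) and the variational characterization of $c$ are used together.
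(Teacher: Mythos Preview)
Your proof is correct and follows essentially the same approach as the paper's own proof: use $(\eta1)$ and $(\eta2)$ to show $\sup_{\eta(A)}J=c$, extract a sequence $(x_n)\subset A$ with $J(\eta(x_n))\to c$, sandwich to get $J(x_n)\to c$, apply $(PS)_c$, and conclude via $(\Feh1)$. Your added remark that $A\subset\Mah^c\subset\Mah^{c'}$ (so that $\eta$ is actually defined on $A$) is a small point the paper leaves implicit.
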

\begin{proof}
Let $A\in\Feh$ be such that $\sup_{A} J=c$ (which exists by Proposition \ref{prop:abstr_1}). By assumptions ($\eta$1) and ($\eta$2), $\eta(A)\in \Feh$ and $\sup_{\eta(A)} J\leq c$, hence $\sup_{\eta(A)} J= c$. Then we can find a sequence $(x_n)_n\subset A$ such that $J(\eta(x_n))\to c$. By using again assumption ($\eta2$), we infer
\[
c\geq J(x_n)\geq J(\eta(x_n))\to c,
\]
and therefore (up to a subsequence) $x_n\to\bar x\in \Kah_c$ by (PS$)_c$. On the other hand, since $A\in\Feh$, assumption ($\Feh$1) implies
that $\bar x\in A$, which concludes the proof of the theorem.
\end{proof}
%
%
Let us now turn to the asymptotic analysis. First of all we introduce a family of functionals parametrized on $\b\in(0,+\infty)$, namely $J_\b:\Mah\rightarrow \R\cup \{+\infty\}$, each of which is lower semi-continuous and moreover
\begin{itemize}
 \item[(J)] $J_{\b_1}(x)\leq J_{\b_2}(x)$ for every $x\in\Mah$, whenever $0<\b_1\leq\b_2<+\infty$.
\end{itemize}
In such a framework we define the limit functional
\[
J_\infty(x):=\sup_{\b>0} J_\b(x).
\]
\begin{lemma}\label{lem:gamma_conv}
For every $x_n,x \in \Mah$ such that $x_n\to x$ and $\b_n\to+\infty$, it holds
\[
J_\infty(x) \leq  \liminf_n J_{\b_n}(x_n).
\]
In particular, $J_\infty$ is lower semi-continuous, and $J_\b$ $\Gamma$--converges to $J_\infty$.
\end{lemma}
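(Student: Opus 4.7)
The plan is to exploit the two structural facts: monotonicity in $\beta$ (property (J)) and the fact that each $J_\beta$ is already lower semi-continuous. Everything in the statement will follow from a single use of these, combined with the definition $J_\infty = \sup_{\beta>0} J_\beta$.

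First I would prove the main inequality $J_\infty(x) \leq \liminf_n J_{\beta_n}(x_n)$ as follows. Fix an arbitrary $\beta > 0$. Since $\beta_n \to +\infty$, we have $\beta_n \geq \beta$ for all $n$ sufficiently large, and hence by (J)
\[
J_\beta(x_n) \leq J_{\beta_n}(x_n) \quad \text{for every $n$ large enough.}
\]
Taking the $\liminf$ on both sides and using the lower semi-continuity of the single functional $J_\beta$ (which is part of our assumptions on the family), I get
\[
J_\beta(x) \leq \liminf_n J_\beta(x_n) \leq \liminf_n J_{\beta_n}(x_n).
\]
Now take the supremum over $\beta > 0$ on the left to conclude $J_\infty(x) \leq \liminf_n J_{\beta_n}(x_n)$.

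Next, the lower semi-continuity of $J_\infty$ is an immediate corollary: for a sequence $x_n \to x$, applying the bound $J_\beta(x_n) \leq J_\infty(x_n)$ pointwise and repeating the argument above (with the constant choice of parameter) yields $J_\beta(x) \leq \liminf_n J_\infty(x_n)$ for every $\beta$, and the supremum over $\beta$ gives $J_\infty(x) \leq \liminf_n J_\infty(x_n)$. (Alternatively, this is the standard fact that a pointwise supremum of lsc functions is lsc.)

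Finally, for $\Gamma$-convergence I must also produce a recovery sequence. Here the constant sequence $x_n \equiv x$ works: by definition $J_{\beta_n}(x) \leq J_\infty(x)$ for every $n$, so $\limsup_n J_{\beta_n}(x) \leq J_\infty(x)$; combined with the liminf inequality already proved this actually gives $\lim_n J_{\beta_n}(x) = J_\infty(x)$. Together with the general liminf inequality, this is precisely the definition of $\Gamma$-convergence of $J_\beta$ to $J_\infty$ as $\beta \to +\infty$. There is no real obstacle in this lemma; the only point to be careful about is to use the monotonicity of the family \emph{before} passing to the limit, so that the lower semi-continuity of a single $J_\beta$ is enough and one never needs any regularity of $J_\infty$ itself.
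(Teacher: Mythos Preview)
Your argument is correct and is essentially the same as the paper's: fix $\beta$, use monotonicity (J) to compare $J_\beta(x_n)\leq J_{\beta_n}(x_n)$ for large $n$, apply lower semi-continuity of $J_\beta$, and then take the supremum in $\beta$. The paper compresses the chain into the single line $J_\b(x)\leq\liminf_n J_\b(x_n)\leq \liminf_n J_{\b_n}(x_n)\leq \liminf_n J_\infty(x_n)$ and does not spell out the recovery sequence for $\Gamma$-convergence, but your additional remarks on that point are correct and harmless.
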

\begin{proof}
For every fixed $\b<+\infty$ it holds
\[
 J_\b(x)\leq\liminf_n J_\b(x_n)\leq \liminf_n J_{\b_n}(x_n)\leq \liminf_n J_\infty(x_n)
\]
(we used the fact that $J_\b$ is lower semi-continuous and that $J_\b\leq J_{\b_n}$ for $n$ sufficiently large).
Then by taking the supremum in $\b$ the lemma follows.
\end{proof}
Consequently, for $0<\b\leq+\infty$, we define the minimax levels
\[
c_\b=\inf_{A\in\Feh} \sup_{x\in A} J_\b(x).
\]
\begin{rem}\label{rem:mon_c_b}
Assumption (J) clearly yields that
\[
\b_1<\b_2<+\infty\quad\implies\quad c_{\b_1}\leq c_{\b_2}\leq c_\infty.
\]
\end{rem}
The previous remark suggests that any constant greater than $c_\infty$ is a suitable common bound for all the functionals. Hence we replace ($\Feh$2) with the following assumption.
\begin{itemize}
\item[($\Feh$2')] for any given $(A_n)_n\subset\Feh$ such that, for some $\b$, $A_n\subset \Mah_\b^{c_\infty+1}$ for every $n$, it holds $\limsup_n A_n\in \Feh$,
\end{itemize}
where
\[
\Mah_\b^{c'}=\left\{x\in\Mah:\,J_\b(x)\leq c'\right\}.
\]
Our first main result is the convergence of both the critical levels and the optimal sets (see Proposition \ref{prop:abstr_1}).
\begin{teo}\label{thm:abstr_2}
Let $J_\b:\Mah\rightarrow \R\cup \{+\infty\}$ ($0<\beta<+\infty$) be a family of lower semi-continuous functionals satisfying (J) and let $J_\infty$ be as before. Moreover suppose that assumption
($\Feh$2') holds, and that $c_\b\in \R$ for every $0<\beta\leq+\infty$. Then
\begin{enumerate}
 \item for every $0<\b<+\infty$ there exists an optimal set for $J_\b$ at $c_\b$;
 \item $c_\b\to c_\infty$ as $\b\to+\infty$;
 \item if $A_{n}\in\Feh$ is optimal for $J_{\b_n}$ at $c_{\b_n}$ and $\b_n\to+\infty$, then
 $A_\infty:=\limsup_n A_n$ is optimal for $J_{\infty}$ at $c_\infty$.
\end{enumerate}
\end{teo}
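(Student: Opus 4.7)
The plan is to reduce everything to the static results of the section (Proposition \ref{prop:abstr_1} and Lemma \ref{lem:gamma_conv}) together with the monotonicity observation of Remark \ref{rem:mon_c_b}. First I would check that $(\Feh 2')$ implies $(\Feh 2)$ at the threshold $c'=c_\infty+1$ for each $J_\b$ with $0<\b<+\infty$: by Remark \ref{rem:mon_c_b} one has $c_\b\leq c_\infty<c_\infty+1$, and $(\Feh 2')$ is exactly the compactness property required by $(\Feh 2)$ at this level. Proposition \ref{prop:abstr_1} applied to $J_\b$ then proves assertion (1).

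For the limiting statements (2)--(3), I would fix $\b_n\to+\infty$ and let $A_n\in\Feh$ be optimal for $J_{\b_n}$ at $c_{\b_n}$, so that $\sup_{A_n} J_{\b_n} = c_{\b_n}$. Set $A_\infty:=\limsup_n A_n$. The crucial step is to check $A_\infty\in\Feh$ via $(\Feh 2')$: picking any auxiliary $\b_0>0$ and using monotonicity $(J)$, for $n$ large enough that $\b_n\geq\b_0$ one has
\[
\sup_{A_n} J_{\b_0} \leq \sup_{A_n} J_{\b_n} = c_{\b_n} \leq c_\infty,
\]
so that $A_n\subset \Mah_{\b_0}^{c_\infty+1}$ eventually, and $(\Feh 2')$ delivers $A_\infty\in\Feh$. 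Given this, for each $x\in A_\infty$ I would extract $x_{n_k}\in A_{n_k}$ with $x_{n_k}\to x$ and invoke Lemma \ref{lem:gamma_conv}:
\[
J_\infty(x) \leq \liminf_k J_{\b_{n_k}}(x_{n_k}) \leq \liminf_k c_{\b_{n_k}} \leq c_\infty,
\]
where the last inequality is Remark \ref{rem:mon_c_b}. Taking the supremum in $x$ yields $\sup_{A_\infty}J_\infty \leq c_\infty$; the reverse inequality is automatic from $A_\infty\in\Feh$ and the definition of $c_\infty$ as an infimum. Hence $A_\infty$ is optimal for $J_\infty$ at $c_\infty$, which proves (3). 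The same squeeze forces $\liminf_k c_{\b_{n_k}} = c_\infty$, and since $\b\mapsto c_\b$ is monotone nondecreasing and bounded above by $c_\infty$, this upgrades to $\lim_{\b\to+\infty} c_\b = c_\infty$, which is (2).

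The main obstacle is the indexing mismatch in the critical compactness step: hypothesis $(\Feh 2')$ requires the test family $(A_n)$ to lie in a sublevel of a \emph{single} functional $J_\b$, while the natural candidates for (3) are optimal for \emph{different} functionals $J_{\b_n}$. The monotonicity assumption $(J)$ is exactly what lets one trade control at $J_{\b_n}$ for control at a common auxiliary $J_{\b_0}$; once this transfer is in place, the remainder is a direct combination of the abstract existence result (Proposition \ref{prop:abstr_1}), the $\Gamma$-liminf estimate (Lemma \ref{lem:gamma_conv}), and elementary monotonicity.
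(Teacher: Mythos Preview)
Your proposal is correct and follows essentially the same route as the paper: both invoke Proposition \ref{prop:abstr_1} for (1) after noting that $(\Feh 2')$ yields $(\Feh 2)$ at level $c_\infty+1$, then use monotonicity $(J)$ to push the optimal sets $A_n$ into a common sublevel $\Mah_{\b_0}^{c_\infty+1}$ so that $(\Feh 2')$ applies, and finally combine Lemma \ref{lem:gamma_conv} with Remark \ref{rem:mon_c_b} to squeeze out (2) and (3). The only cosmetic difference is that the paper uses $\b_1$ from the given sequence as the auxiliary index, whereas you introduce a separate $\b_0$; your choice is arguably cleaner since it does not tacitly assume $\b_1\leq\b_n$ for all $n$.
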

\begin{proof}
The first point is a direct consequence of Proposition \ref{prop:abstr_1}. Now by Remark \ref{rem:mon_c_b} we know that $c_\b$ is monotone in $\b$ and that $\lim c_\b \leq c_\infty<+\infty$ by assumption. Let
$\b_n, A_n, A_\infty$ be as in the statement. We have that $\sup_{A_n}J_{\b_1}\leq\sup_{A_n}J_{\b_n}\leq c_\infty$, therefore $A_n\subset \{J_{\b_1}\leq c_\infty+1\}$ and assumption ($\Feh$2') provides $A_\infty\in\Feh$. Now for every $\bar x\in A_\infty$ there exists a (sub)sequence $x_n\to \bar x$, with $x_n\in A_n$. By taking into account Lemma \ref{lem:gamma_conv} we have
\[
J_\infty(\bar x)\leq \liminf_n J_{\b_n}(x_n) \leq \liminf_n \left(\sup_{A_n} J_{\b_n}\right) = \lim_n c_{\b_n}\leq c_\infty.
\]
By taking the supremum for $\bar x\in A_\infty$ (and recalling that $A_\infty\in\Feh$), the theorem
follows.
\end{proof}

Next we turn to the study of the corresponding critical sets, by introducing a family of maps $\eta_\b: \Mah_\b^{c_{\infty}+1}\to \Mah_\b^{c_{\infty}+1}$ satisfying
\begin{itemize}
 \item[$(\eta1)_\b$] $\eta_\b(A)\in\Feh$ whenever $A\in\Feh$, $A\subset\Mah_\b^{c_{\infty}+1}$;
 \item[$(\eta2)_\b$] $J_\b(\eta_\b(x))\leq J_\b(x)$, for every $x\in\Mah_\b^{c_{\infty}+1}$.
\end{itemize}
Just as we did before, we define, for every $0<\b\leq+\infty$
\begin{equation}\label{eq:critical_sets_abstract}
\Kah_\b=\Kah_{c_\b}=\left\{x\in\Mah:\,J_\b(x)=J_\b(\eta_\b(x))=c_\b\right\}.
\end{equation}
As a straightforward consequence of Theorem \ref{thm:abstr_1}, the following holds.
\begin{teo}\label{thm:abstr_b}
Let $J_\b:\Mah\rightarrow \R\cup \{+\infty\}$ ($0<\beta<+\infty$) be a family of lower semi-continuous functionals satisfying (J) and let $J_\infty$ be as before. Suppose that $(\Feh1)$, ($\Feh$2') hold, and that, for every $0<\b\leq+\infty$, $c_\beta \in \R$ and the maps $\eta_\beta:\Mah_\b^{c_\infty+1}\rightarrow \Mah_\b^{c_\infty+1}$ verify $(\eta 1)_\b$ and $(\eta 2)_\b$. Suppose moreover that the pair $(J_\b,\eta_\b)$ satisfies $(PS)_{c_\b}$. Then every optimal set for $J_\b$ at $c_\b$ intersect $\Kah_\b$, which in particular is non empty ($\b\leq+\infty$).
\end{teo}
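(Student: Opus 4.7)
The plan is to reduce to Theorem \ref{thm:abstr_1} by applying it separately, for each fixed $0<\b\leq+\infty$, to the single functional $J_\b$ and the deformation $\eta_\b$. The whole content is therefore to verify that the parametrized hypotheses $(\Feh1)$, $(\Feh2')$, $(\eta1)_\b$, $(\eta2)_\b$, $(PS)_{c_\b}$ correctly specialize to the assumptions of Theorem \ref{thm:abstr_1} for a single suitable choice of the sub-level threshold $c'$.

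The natural choice is $c':=c_\infty+1$. First, this is strictly above every critical level we care about: by Remark \ref{rem:mon_c_b} we have $c_\b\leq c_\infty<c_\infty+1$ for every $0<\b\leq+\infty$, so $c'>c_\b$ as required. Second, the maps $\eta_\b$ are already defined on $\Mah_\b^{c_\infty+1}=\Mah_\b^{c'}$, and $(\eta1)_\b$, $(\eta2)_\b$ are literally $(\eta1)$, $(\eta2)$ for the pair $(J_\b,\eta_\b)$ with this threshold. The $(PS)_{c_\b}$ assumption is also given directly.

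The only slightly non-automatic point is to deduce $(\Feh2)$ at $c'$ for each $J_\b$ from the single hypothesis $(\Feh2')$. For $\b<+\infty$ there is nothing to do: $(\Feh2')$, instantiated with this very $\b$, asserts exactly that any $(A_n)_n\subset\Feh$ with $A_n\subset\Mah_\b^{c_\infty+1}$ satisfies $\limsup_n A_n\in\Feh$. For $\b=+\infty$ we use monotonicity from (J): since $J_{\b_0}\leq J_\infty$ for any fixed $\b_0\in(0,+\infty)$, the inclusion $\Mah_\infty^{c_\infty+1}\subset\Mah_{\b_0}^{c_\infty+1}$ holds, and applying $(\Feh2')$ with $\b_0$ gives the required closure of $\limsup_n A_n$ in $\Feh$.

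With these verifications in place, Theorem \ref{thm:abstr_1} applied to $(J_\b,\eta_\b)$ at level $c_\b$ yields: for every optimal $A\in\Feh$ for $J_\b$ at $c_\b$ (which exists by Theorem \ref{thm:abstr_2}(1) for $\b<+\infty$ and by Proposition \ref{prop:abstr_1} combined with Lemma \ref{lem:gamma_conv} for $\b=+\infty$), the intersection $A\cap\Kah_\b$ is non empty, and in particular $\Kah_\b\neq\emptyset$. There is no real obstacle here, since the argument is a template instantiation; the only point requiring a moment's attention is the $\b=+\infty$ case of $(\Feh2)$, which is handled by the monotonicity (J).
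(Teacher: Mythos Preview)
Your proposal is correct and follows exactly the route the paper indicates: the paper states the result as ``a straightforward consequence of Theorem \ref{thm:abstr_1}'' without further detail, and your argument simply spells out that instantiation, including the one mildly nontrivial check that $(\Feh2')$ yields $(\Feh2)$ at $\b=+\infty$ via the monotonicity (J).
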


It is now natural to wonder what is the relation between $\limsup \Kah_\b$ and $\Kah_\infty$. The desired result would be the equality of the sets, which could be obtained under some suitable relations between the deformations $\eta_\b$ and $\eta_\infty$. However, as we mentioned in the introduction, in our application such relations do not seem to hold. Instead we will assume an uniform Palais--Smale type condition, which will lead us to consider a slightly larger set than $\limsup \Kah_{\b}$. Let us assume that the following holds:
\begin{itemize}
 \item[(UPS)] if the sequences $(x_n)_n\subset\Mah$, $(\b_n)_n\subset\R^+$ are such that $\b_n\to+\infty$ and
 $J_{\b_n} (x_n)\to c_\infty$, $J_{\b_n}(\eta_{\b_n}(x_n))\to c_\infty$, then there exists $\bar x\in \Mah$ such that, up to a subsequence,
 $x_n\to\bar x$, $\eta_{\b_n}(x_n)\to\bar x$
\end{itemize}
(again, since $J_{\b_n}(x_n)\to c_\infty$, $\eta_{\b_n}(x_n)$ is well defined for large $n$). It is worthwhile to point out explicitly the two main differences between (PS) and (UPS), apart from the
dependence on $\b$. On one hand, in the latter we do not obtain $\bar x \in \Kah_\infty$ -- see Remark \ref{rem:K_*_compact} below. On the other hand, in (UPS) we require not only $x_n$ but also $\eta_{\b_n}(x_n)$ to converge,  and the limit to be the same (to enlighten this choice, see also Remark \ref{rem:J_const=>eta_fix}). Condition (UPS) suggests the definition of the set
\[
\Cah_*=\left\{\begin{array}{l}x\in\Mah:\,\text{there exist sequences }
               (x_n)_n\subset\Mah,\,(\b_n)_n\subset\R^+\text{ such that }\\
   \qquad \bullet\ x_n\to x,\,\b_n \to +\infty,\\
   \qquad \bullet\ J_{\b_n}(x_n) \to c_\infty,\text{ and}\\
   \qquad \bullet\ J_{\b_n}(\eta_{\b_n}(x_n)) \to c_\infty
 \end{array}
 \right\}.
\]
\begin{rem}\label{rem:UPS_vs_K_*}
If $(x_n)_n$ is an uniform Palais--Smale sequence in the sense of assumption (UPS), then (up to a subsequence) $x_n\to \bar x\in\Cah_*$.
\end{rem}
\begin{rem}\label{rem:K_*_compact}
By Theorem \ref{thm:abstr_2}, it is immediate to verify that
\[
\limsup_{\b\to+\infty} \Kah_{\b}\subset\Cah_*.
\]
Finally if $x\in\Cah_*$ then, by Lemma \ref{lem:gamma_conv},
$J_\infty(x)\leq c_\infty$. Observe that the inequality may be strict (nonetheless, the following theorem will imply
that for some point the equality holds).
\end{rem}
Our final result is the following.
\begin{teo}\label{thm:abstr_3}
Under the assumptions of Theorem \ref{thm:abstr_b}, suppose moreover that (UPS) holds. Then we have that
\[
\Cah_*\cap K_\infty\neq\emptyset.
\]
More precisely, for every $(A_n)_n\subset\Feh$, with $A_n$ optimal for $J_{\b_n}$ at $c_{\b_n}$, and $\b_n\to+\infty$, there exists
$\bar x\in \Cah_*\cap K_\infty\cap \limsup_n A_n $.
\end{teo}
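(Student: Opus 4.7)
The plan is to pull Theorem \ref{thm:abstr_b} back from the limit functional $J_\infty$ along the deformations $\eta_{\beta_n}$, and then synchronize two sequences via (UPS) so that their common limit lies simultaneously in $\limsup_n A_n$, in $\Kah_\infty$, and in $\Cah_*$.

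First, I would set $B_n := \eta_{\beta_n}(A_n)$. Since $A_n$ is optimal for $J_{\beta_n}$ and $c_{\beta_n}\le c_\infty$, one has $A_n\subset \Mah_{\beta_n}^{c_\infty+1}$, so $(\eta 1)_{\beta_n}$ gives $B_n\in\Feh$; combining $(\eta 2)_{\beta_n}$ with the minimax inequality forces $\sup_{B_n}J_{\beta_n}=c_{\beta_n}$, i.e.\ each $B_n$ is itself optimal for $J_{\beta_n}$. Theorem \ref{thm:abstr_2}(3) then provides $B_\infty:=\limsup_n B_n\in\Feh$ optimal for $J_\infty$ at $c_\infty$. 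Applying Theorem \ref{thm:abstr_b} at $\beta=\infty$ to this particular optimal set yields a point $\bar y\in B_\infty\cap\Kah_\infty$; in particular $J_\infty(\bar y)=c_\infty$.

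Next, I would unravel the $\limsup$ defining $B_\infty$: choose $n_k\to\infty$ and $\tilde y_{n_k}\in B_{n_k}$ with $\tilde y_{n_k}\to\bar y$, and pick $x_{n_k}\in A_{n_k}$ with $\tilde y_{n_k}=\eta_{\beta_{n_k}}(x_{n_k})$. Then $J_{\beta_{n_k}}(\tilde y_{n_k})\le J_{\beta_{n_k}}(x_{n_k})\le c_{\beta_{n_k}}\to c_\infty$ by $(\eta 2)_\beta$ and optimality of $A_{n_k}$, while Lemma \ref{lem:gamma_conv} applied to $\tilde y_{n_k}\to\bar y$, together with $J_\infty(\bar y)=c_\infty$, forces $\liminf_k J_{\beta_{n_k}}(\tilde y_{n_k})\ge c_\infty$. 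Sandwiching shows that both $J_{\beta_{n_k}}(x_{n_k})$ and $J_{\beta_{n_k}}(\eta_{\beta_{n_k}}(x_{n_k}))$ tend to $c_\infty$. Now (UPS) supplies, up to a further subsequence, a point $\bar x\in\Mah$ with $x_{n_k}\to\bar x$ and $\eta_{\beta_{n_k}}(x_{n_k})\to\bar x$; but $\eta_{\beta_{n_k}}(x_{n_k})=\tilde y_{n_k}\to\bar y$, so $\bar x=\bar y\in\Kah_\infty$, while $x_{n_k}\in A_{n_k}$ gives $\bar x\in\limsup_n A_n$, and the sequences themselves certify $\bar x\in\Cah_*$.

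The main obstacle, which dictates the detour through $B_n$ rather than working directly with $A_n$, is that an arbitrary sequence $x_{n_k}\in A_{n_k}$ converging to some $\bar x\in A_\infty\cap\Kah_\infty$ is not a priori an UPS-sequence: the images $J_{\beta_{n_k}}(\eta_{\beta_{n_k}}(x_{n_k}))$ could drop well below $c_\infty$, so (UPS) need not apply. Deforming the optimal sets first and then lifting back to a preimage is precisely the device that ties the $x_{n_k}$ to a controlled $\tilde y_{n_k}=\eta_{\beta_{n_k}}(x_{n_k})$, providing the two synchronized sequences required by (UPS) and thereby realizing $\bar x$ in all three sets at once.
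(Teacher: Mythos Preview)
Your proof is correct and follows essentially the same route as the paper: deform the optimal sets $A_n$ to $B_n=\eta_{\beta_n}(A_n)$, pass to $B_\infty=\limsup_n B_n$ via Theorem~\ref{thm:abstr_2}, pick $\bar y\in B_\infty\cap\Kah_\infty$ (the paper invokes this implicitly, you cite Theorem~\ref{thm:abstr_b} explicitly), lift back to preimages $x_{n_k}\in A_{n_k}$, run the sandwich with Lemma~\ref{lem:gamma_conv} to make $(x_{n_k})$ a UPS sequence, and conclude $\bar x=\bar y$ by uniqueness of limits. Your closing paragraph explaining why the detour through $B_n$ is necessary is a welcome addition that the paper leaves tacit.
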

\begin{proof}
Let $A_n$ be as in the statement, and take $B_n=\eta_{\b_n}(A_n)$, which is also optimal for $J_{\b_n}$ at $c_{\b_n}$ by assumptions $(\eta 1)_{\b_n}$, $(\eta 2)_{\b_n}$. Theorem \ref{thm:abstr_2} then yields that $\limsup_{n} B_n=:B_\infty\in \Feh$ is optimal for $J_\infty$ at $c_\infty$, that is, there exists
$$\bar y \in B_\infty\cap K_\infty.$$
By definition, up to a subsequence, there exists $x_n\in A_n$ such that $\eta_{\b_n}(x_n)\rightarrow \bar y$. Then assumption $(\eta 2)_{\b_n}$ together with Lemma \ref{lem:gamma_conv} provides
\[
c_\infty=J_\infty(\bar y)\leq \liminf_n J_{\b_n}(\eta_{\b_n}(x_n)) \leq
\liminf_n J_{\b_n}(x_n)\leq \lim_n \left( \sup_{A_n}J_{\b_n}\right)= \lim_n c_n  = c_\infty.
\]
In particular this implies that $(x_n)_n$ is a Palais--Smale sequence in the sense of assumption (UPS); by using Remark \ref{rem:UPS_vs_K_*} we infer that (again up to a subsequence)
\[
x_n\to\bar x\in \limsup_n A_n \cap\Cah_*.
\]
But (UPS) also implies that $\eta_{\b_n}(x_n)\to \bar x$ and hence $\bar x=\bar y$, which concludes the proof of the theorem.
\end{proof}
%

\section{Convergence of the min--max levels}\label{sec:conv_lev}

The rest of the paper is devoted to apply (and refine) the results obtained in the previous section to the problem discussed in the introduction. In order to apply the abstract results of Section \ref{sec:abstrac} we need to introduce $\Mah$, $\Feh$ and
$\eta_\b$ for the present case. In this section we deal with the asymptotics of the minimax levels and prove Theorem \ref{thm:main2}. The proof of the remaining results, and in particular the construction of the deformations, will be the object of the subsequent sections.
Since the proof is independent of $k$, from now on and throughout all the paper
we assume that
\[
k\in\N^+\text{ is fixed (and will often be omitted).}
\]

We define
\[
\Mah=\left\{(u,v)\in H^1_0(\O)\times H^1_0(\O) : u,v\geq 0 \text{ in } \Omega, \,|u|_2=|v|_2=1\right\},
\]
\[
\dist^2\left((u_1,v_1),(u_2,v_2)\right)=
|u_1-u_2|_2^2+|v_1-v_2|_2^2,
\]
and
$$
J_\b(u,v)=\frac12\left(\|u\|^2+\|v\|^2\right)+\frac14\int_{\O}
\left(u^4+v^4\right)\,dx+\frac\b2\int_{\O}u^2v^2\,dx
$$
for $0<\b<+\infty$. Notice that the limiting functional (as introduced in Section \ref{sec:abstrac}) coincides with the one defined in the introduction, i.e.,
$$
J_\infty(u,v)= \sup_{\b>0} J_\b(u,v)=
\begin{cases}
J_0(u,v) & \text{when }\int_{\O}u^2v^2\,dx=0\smallskip\\
+\infty  & \text{otherwise.}
\end{cases}
$$
Moreover we set
\[
\Feh=\Feh_k=\left\{A\in\Feh_0:\,\g_2(A)\geq k\right\}\quad\text{(as in Definition \ref{defi:L2_genus})},
\]
which implies that  the critical values $c_\b$ introduced in Section \ref{sec:abstrac} coincide with the values $c_\b^k$ defined in the introduction.
\begin{rem}\label{rem:compact_level_sets}
It is worthwhile to stress that for any given $c'\in \R$ and $0<\b\leq \infty$ the set
\[
\Mah_\b^{c'}=\{(u,v)\in \Mah :\ J_\b(u,v)\leq c' \}
\]
is $L^2$--compact. This is a consequence of the coercivity of the functional together with the Sobolev embedding Theorem. This motivates our decision of working with this topology.
\end{rem}
We start by presenting some properties of the $L^2$--genus (recall Definition \ref{defi:L2_genus}).
\begin{prop}\label{prop:genus}
\begin{enumerate}
   \item[(i)] Take $A\in \mathcal{F}_0$ and let $S^{k-1}$ be the standard ($k-1$)-sphere in $\R^k$.
   If there exists an
   $L^2$--homeomorphism $\psi: S^{k-1} \to A$ satisfying $\psi(-x)=\sigma (\psi(x))$ then $\g_2(A)=k$.
   \item[(ii)] Consider $A\in \mathcal{F}_k$ and let $\eta:A\rightarrow \Mah$ be an $L^2$--continuous,
   $\sigma$--equivariant and sign--preserving map. Then $\overline{\eta(A)}\in \mathcal{F}_k$.
   \item[(iii)] If $A\in \Feh_0$ is an $L^2$--compact set, then there exists a $\delta>0$ such
   that\footnote{Here $N_\delta(A)=\left\{(u,v)\in\Mah:\dist_2((u,v),A)<\delta\right\}$.}
   $\g_2\left(\overline{N_\delta(A)}\right)=\g_2(A)$.
   \item[(iv)] Let $\{A_n\}_{n\in \N}$ be a sequence in $\mathcal{F}_k$ and let $X$ be an $L^2$--compact subset of
   $\Mah$  such that $A_n\subset X$. Then \footnote{Recall that $\limsup A_n=\{(u,v)\in \Mah:
   \exists n_k\rightarrow +\infty,\ (u_{n_k},v_{n_k})\in A_{n_k} \text{ such that }
   \dist_2((u_{n_k},v_{n_k}), (u,v))\rightarrow 0\}$.} $\limsup A_n\in\mathcal{F}_k$.
\end{enumerate}
\end{prop}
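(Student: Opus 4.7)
These four statements are $L^2$-topological versions of classical Krasnoselskii genus properties, with the involution $\sigma(u,v)=(v,u)$ playing the role of the antipodal map. The plan is to derive each from the corresponding standard fact by reducing to the existence or non-existence of continuous $\sigma$-odd maps into $\R^m\setminus\{0\}$. For (i), the upper bound $\gamma_2(A)\leq k$ is immediate because $\psi^{-1}:A\to S^{k-1}\subset\R^k\setminus\{0\}$ is itself an $L^2$-continuous $\sigma$-odd map. For the matching lower bound I would argue by contradiction: a witnessing $f:A\to\R^{k-1}\setminus\{0\}$ would compose with $\psi$ to give an odd continuous map $S^{k-1}\to\R^{k-1}\setminus\{0\}$, ruled out by the Borsuk--Ulam theorem.

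For (ii) the first task is to verify that $\overline{\eta(A)}\in\mathcal{F}_0$: closedness is built in, nonnegativity of the components follows from $\eta$ being sign-preserving, and $\sigma$-invariance is inherited via $\sigma(\eta(A))=\eta(\sigma(A))=\eta(A)$, which passes to the closure by $L^2$-continuity of $\sigma$. Then for any $\sigma$-odd continuous $g:\overline{\eta(A)}\to\R^m\setminus\{0\}$, the composition $g\circ\eta$ is a $\sigma$-odd, $L^2$-continuous map on $A$ with values in $\R^m\setminus\{0\}$, forcing $m\geq\gamma_2(A)\geq k$ and giving $\gamma_2(\overline{\eta(A)})\geq k$.

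The proofs of (iii) and (iv) share a common tool, an \emph{equivariant extension lemma}: any $L^2$-continuous $\sigma$-odd map defined on an $L^2$-closed $\sigma$-invariant subset of $\Mah$ extends to an $L^2$-continuous $\sigma$-odd map on all of $\Mah$. I would obtain this by applying Tietze's theorem componentwise in the metric space $(\Mah,\dist_2)$ and then symmetrizing via $\tilde f(x)\leftarrow\tfrac12\bigl(\tilde f(x)-\tilde f(\sigma(x))\bigr)$, which leaves $f$ unchanged on its original domain. Granted the lemma, (iii) follows by extending a genus-realizing $f:A\to\R^{\gamma_2(A)}\setminus\{0\}$ to a $\tilde f$ on $\Mah$ and using the $L^2$-compactness of $A$ together with continuity of $\tilde f$ to find a tubular radius $\delta>0$ on which $\tilde f$ remains nonzero; the opposite inequality $\gamma_2(\overline{N_\delta(A)})\geq\gamma_2(A)$ is monotonicity under inclusion.

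For (iv), I would first check that $\limsup_n A_n\in\mathcal{F}_0$ — the Kuratowski limsup is $L^2$-closed in a metric space, and both nonnegativity and $\sigma$-invariance pass to the limsup — and then argue by contradiction. If $\gamma_2(\limsup_n A_n)\leq k-1$, extend a witnessing odd map to a $\tilde f$ nonvanishing on some $L^2$-open neighborhood $U$ of $\limsup_n A_n$. The crucial step, which exploits the $L^2$-compactness of $X$, is to show that $A_n\subset U$ eventually: otherwise a subsequence of points in $A_n\setminus U$ would $L^2$-converge in $X$ to a point of $\limsup_n A_n\cap(X\setminus U)$, a contradiction. Once this is in place, $\tilde f|_{A_n}$ witnesses $\gamma_2(A_n)\leq k-1$, contradicting $A_n\in\mathcal{F}_k$. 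The main obstacle I anticipate is organizing the equivariant Tietze extension cleanly in the $L^2$-metric on $\Mah$; the symmetrization trick makes this routine once one recognizes $(\Mah,\dist_2)$ as a bona fide metric space so that Tietze applies, but it is the step requiring the most care with signs and domains.
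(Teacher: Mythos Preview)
Your proposal is correct and follows the same route as the paper: the paper omits the proofs of (i)--(iii) entirely (referring to Struwe for the standard genus arguments you outline), and for (iv) it uses precisely your ``eventual containment'' argument via the $L^2$--compactness of $X$, the only organizational difference being that the paper invokes (iii) as a black box to obtain the neighborhood of $\limsup_n A_n$ with the right genus rather than redoing the equivariant extension inside the proof of (iv).
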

\begin{proof}
The proofs of the first three properties are similar to the ones of the usual genus, and therefore we omit them (see for example Struwe, Proposition 5.4). As for (iv), let $A_n$ and $X$ be as above. By the definition of
$\limsup$ it is straightforward to check that the set $\limsup_n A_n$ belongs to $\Feh_0$, and that it is $L^2$--compact. We now claim that for every $\delta>0$ there exists $n_0\in\N$ such that
$$
A_n\subset N_\delta(\limsup A_n) \qquad \text{for } n\geq n_0,
$$
which together with point (iii) yields the desired result. Suppose that our claim is false. Then there exist a $\bar \delta>0$,
$n_k\rightarrow +\infty$ and $(u_{n_k},v_{n_k})\in A_{n_k}$ such that
$(u_{n_k},v_{n_k})\notin N_{\bar \delta}(\limsup A_n).$ But since $X$ is
sequentially compact, then there exists a $(u,v)\in X\subset \Mah$ such that, up
to a subsequence, $(u_{n_k},v_{n_k})\rightarrow (u,v)$. Hence
$(u,v)\in \limsup A_n$, a contradiction.
\end{proof}

\begin{lemma}\label{lem:G_k}
For every $\b$ finite it holds $0\leq c_\b\leq c_\infty<+\infty$.
\end{lemma}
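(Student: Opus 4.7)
The two easy inequalities come essentially for free. Non-negativity $c_\b \geq 0$ holds because every term of $J_\b$ is non-negative on $\Mah$, so $\sup_A J_\b \geq 0$ on any admissible $A\in\Feh_k$ (which is non-empty by the construction below). The monotonicity $c_\b \leq c_\infty$ for finite $\b$ is exactly Remark \ref{rem:mon_c_b} applied to the concrete family at hand, since $J_\b \leq J_\infty$ pointwise.

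The only real content is therefore the finiteness of $c_\infty$. Since $J_\infty(u,v)=+\infty$ as soon as $\int_\O u^2v^2>0$, exhibiting $A\in\Feh_k$ with $\sup_A J_\infty<+\infty$ amounts to building an $L^2$-closed, non-negative, $\s$-invariant set of pairs with pointwise disjoint supports, on which $\gamma_2\geq k$. The cleanest route is to build $A$ as the image of an explicit $L^2$-homeomorphism $\Psi:S^{k-1}\to\Mah$ with $\Psi(-x)=\s\Psi(x)$ and apply Proposition \ref{prop:genus}(i) to conclude $\gamma_2(A)=k$.

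Concretely I would pick $2k$ pairwise disjoint open balls inside $\O$ and smooth non-negative bumps $\phi_1,\dots,\phi_k,\psi_1,\dots,\psi_k$ supported in them, with $|\phi_i|_2=|\psi_i|_2=1$, and set
\[
\Psi(x)=\left(\sum_{i=1}^k\bigl(x_i^+\phi_i+x_i^-\psi_i\bigr),\ \sum_{i=1}^k\bigl(x_i^+\psi_i+x_i^-\phi_i\bigr)\right),\qquad x\in S^{k-1}.
\]
Disjointness of the $2k$ supports together with $\sum_i\bigl((x_i^+)^2+(x_i^-)^2\bigr)=|x|^2=1$ forces both components to lie in $\Mah$ and to have pointwise disjoint supports, so $J_\infty(\Psi(x))=J_0(\Psi(x))<+\infty$. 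The identity $(-x)_i^\pm=x_i^\mp$ gives $\Psi(-x)=\s\Psi(x)$. Continuity of $\Psi$ into $H^1_0\times H^1_0$ (and a fortiori into $L^2\times L^2$) is clear, and injectivity follows from $x_i=\langle u-v,\phi_i\rangle_2$; compactness of $S^{k-1}$ then promotes $\Psi$ to a homeomorphism onto its image $A$. Proposition \ref{prop:genus}(i) yields $\gamma_2(A)=k$, hence $A\in\Feh_k$, while the uniform $H^1_0$-boundedness of $\Psi(S^{k-1})$ keeps $J_0$ bounded on $A$, giving $c_\infty\leq\sup_A J_\infty<+\infty$.

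No step is really delicate; the only care needed is to package disjoint supports, $L^2$-normalisation, $\s$-equivariance, continuity and injectivity simultaneously, and the $2k$-bump ansatz above does exactly that.
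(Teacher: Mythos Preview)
Your proof is correct and follows essentially the same strategy as the paper: both construct an explicit $\sigma$-equivariant $L^2$-homeomorphism from $S^{k-1}$ onto a set of pairs with disjoint supports, then invoke Proposition~\ref{prop:genus}(i) to get genus $k$ and hence $c_\infty<+\infty$. The only cosmetic difference is the parametrization---the paper uses $k$ sign-changing functions $\phi_i$ with disjoint supports and takes normalized positive/negative parts of $\sum t_i\phi_i$, whereas you use $2k$ non-negative bumps; both work equally well.
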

\begin{proof}
The proof of the lemma relies on the fact that, given any
$k\in \N$, we can construct a set $G_k\in\mathcal{F}_0$ with $\g_2(G_k)=k$. Here we use some ideas presented in \cite{dww}, Proposition 4.3. Indeed, consider $k$ functions $\phi_1, \ldots,\phi_k \in
H^1_0(\Omega)$ such that $\phi_i\cdot \phi_j =0$ a.e. for any $i\neq
j$, with $\phi_i^+,\phi_i^-\neq 0.$ Define
\[
\psi: S^{k-1} \rightarrow \Mah,\qquad (t_1,\ldots,t_k)\mapsto \left(\bar{t} \left(\sum_i t_i
\phi_i\right)^+,\bar{s} \left(\sum_i t_i \phi_i\right)^-\right),
\]
where
\[
\bar{t}^2=\frac{1}{\left|\left(\sum_i t_i \phi_i\right)^+\right|^2_2}= \frac{1}{\left(\sum_i t_i^2
\left|\phi_i^+\right|^2_2\right)}, \qquad \bar{s}^2=\frac{1}{\left|\left(\sum_i t_i \phi_i\right)^-\right|^2_2}= \frac{1}{\left(\sum_i t_i^2
\left|\phi_i^-\right|^2_2\right)},
\]
and $G_k=\psi(S^{k-1})$. It is easy to verify that $G_k\in\Feh_0$. Since $\psi$ is an $L^2$--homeomorphism between
$S^{k-1}$ and $G_k$, and
\(
\s(\psi(t_1,\ldots,t_k))=\psi(-t_1,\ldots,-t_k),
\)
then Proposition \ref{prop:genus}-$(i)$ provides that $\g_2(G_k)=k$. Since $(u,v)\in G_k$ implies $u\cdot v\equiv0$, then
\[
c_\infty\leq\sup_{G_k}J_\infty <+\infty.
\]
Finally, Remark \ref{rem:mon_c_b} allows to conclude the proof.
\end{proof}
We are already in a position to prove the convergence of the minimax levels.

\begin{proof}[Proof of Theorem \ref{thm:main2}]
This is a direct consequence of Theorem \ref{thm:abstr_2}. Let us check its hypotheses. Under the above definitions, assumption (J) easily holds. For every $0<\b\leq +\infty$, $c_\b \in \R$ (by Lemma \ref{lem:G_k}), and moreover
($\Feh$2') holds (by recalling Remark \ref{rem:compact_level_sets}, Proposition \ref{prop:genus}-(iv) and by using the fact that $c_\infty\in \R)$. Finally let us check that each $J_\b$ is a lower semi-continuous functional in $(\Mah,\dist)$, for $0<\beta<+\infty$.  Indeed,
let $(u_n,v_n)$, $(\bar u,\bar v)$ be couples of $H^1_0$ functions
such that dist$((u_n,v_n),(\bar u,\bar v))\to0$. If $\liminf_n
J_\b(u_n,v_n)=+\infty$ then there is nothing to prove, otherwise,
by passing to the subsequence that achieves the $\liminf$, we have that
$\|(u_n,v_n)\|$ is bounded. Thus, again
up to a subsequence, $(u_n,v_n)$ weakly converges (in $H^1_0$), and, by
uniqueness, the weak limit is $(\bar u,\bar v)$. Then we can
conclude by using the weak lower semicontinuity of $\|\cdot\|$ (and the
weak continuity of the other terms in $J_\b$).
\end{proof}
Let us conclude this section recalling that, if $\b$ is sufficiently large,
we can exclude the presence of fixed points of $\sigma$ in the set $\mathcal{K}^k_\b$. As in the usual
genus theory, this insures that, if two (or more) critical values coincide, then $\mathcal{K}^k_\b$ contains an infinite number of elements.
\begin{lemma}\label{lem:no_fix_point}
Let $k\in\N$ be fixed. There exists a (finite) number $\bar\b(k)>0$,
depending only on $k$, such that, for every
$\bar\b(k)\leq\b\leq+\infty$, we have
\[
\mathcal{K}^k_\b \cap\left\{(u,u)\in \Mah\right\}=\emptyset.
\]
\end{lemma}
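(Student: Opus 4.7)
The plan is to exploit the fact that a fixed point of $\sigma$ in $M$ is a pair $(u,u)$ with $u\geq0$ and $|u|_2=1$, and to show that the penalization term $\frac{\beta}{2}\int u^2v^2$ forces $J_\beta(u,u)$ to blow up as $\beta\to+\infty$, while the critical level $c^k_\beta$ stays uniformly bounded by $c^k_\infty$.

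First I would dispose of the case $\beta=+\infty$: if $(u,u)\in\mathcal{K}^k_\infty$, then by definition $J_\infty(u,u)=c^k_\infty<+\infty$ (the latter by Lemma \ref{lem:G_k}), which forces $\int_\O u^2u^2\,dx=0$, hence $u\equiv0$; this contradicts $|u|_2=1$. So $\mathcal{K}^k_\infty$ contains no fixed point at all.

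For $\beta$ finite, suppose $(u,u)\in\mathcal{K}^k_\beta$. A direct computation gives
\[
J_\beta(u,u)=\|u\|^2+\frac{1+\beta}{2}|u|_4^4,
\]
and Cauchy--Schwarz, together with $|u|_2=1$, yields
\[
1=\left(\int_\O u^2\,dx\right)^2\leq |\O|\int_\O u^4\,dx,\qquad\text{so}\qquad |u|_4^4\geq\frac{1}{|\O|}.
\]
Combining these with $J_\beta(u,u)=c^k_\beta\leq c^k_\infty$ (Remark \ref{rem:mon_c_b} and Lemma \ref{lem:G_k}) gives
\[
\frac{1+\beta}{2|\O|}\leq c^k_\infty,
\]
i.e., $\beta\leq 2|\O|\,c^k_\infty-1$. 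Thus setting, for instance, $\bar\beta(k):=2|\O|\,c^k_\infty$ (a finite number depending only on $k$ through $c^k_\infty$) guarantees that no fixed point of $\sigma$ belongs to $\mathcal{K}^k_\beta$ for any $\bar\beta(k)\leq\beta\leq+\infty$.

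There is no real obstacle here: the argument is a direct energy estimate using the uniform bound $c^k_\beta\le c^k_\infty$ already established in Lemma \ref{lem:G_k}. The only thing to watch is the handling of $\beta=+\infty$ separately, since on the diagonal $J_\infty$ takes the value $+\infty$ and the variational identity becomes trivial.
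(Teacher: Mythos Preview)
Your proof is correct and follows essentially the same route as the paper: handle $\beta=+\infty$ by noting $J_\infty(u,u)<+\infty$ forces $u\equiv0$, and for finite $\beta$ use H\"older to get $|u|_4^4\geq 1/|\O|$, combine with $J_\beta(u,u)=c^k_\beta\leq c^k_\infty$, and arrive at the identical threshold $\bar\beta(k)=2|\O|\,c^k_\infty$.
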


\begin{proof}
When $\b=+\infty$ the assertion holds true with no limitations on $\b$, since $J_\infty(u,u)<+\infty$ implies $u\equiv0$, and $(0,0)\not\in \Mah$. For $\b<+\infty$ let us consider the problem
\[
\inf_{(u,u)\in \Mah} J_\b(u,v) = \inf_{|u|_2=1} \left(\|u\|^2 +\frac{1+\b}{2}\int_\O u^4\,dx\right)\geq\inf_{|u|_2=1} \frac{1+\b}{2|\O|}
\left(\int_\O u^2\,dx\right)^2= \frac{1+\b}{2|\O|}.
\]
Taking into account Lemma \ref{lem:G_k}, the assertion of the lemma is proved once
\[
\frac{1+\b}{2|\O|}>c_k^\infty.
\]
But this is true if we take $\b\geq\bar\b(k)=2|\O|c_k^\infty$.
\end{proof}

\section{Existence and asymptotics of the critical points}\label{sec:main1_beta}

In this section we prove the remaining results stated in the introduction. To this aim we shall define suitable deformations $\eta_\b$, which will allow us to apply the abstract results of Section \ref{sec:abstrac} that concern the critical sets -- namely Theorems \ref{thm:abstr_b} and \ref{thm:abstr_3}. Afterwards, we will establish the equivalence between the critical sets defined in the introduction and the ones of Section \ref{sec:abstrac}.

As we mentioned, we need to choose different deformations for our porpoises, for the case $\b<+\infty$ and
$\b=+\infty$. Let us start with the definition of $\eta_\b$ for $\b<+\infty$ (here $\b$ is fixed). The desired map will make use of the parabolic flow associated to $J_\b$ on $\Mah$. In order to do so, first we need to fix a relation between $(\l,\mu)$ and $(u,v)$.
\begin{rem}\label{rem:lagr_mult}
If $(u,v)\in \Mah$ satisfies \eqref{eq:sys} then, by testing the equations
with $u$ and $v$ respectively, one immediately obtains
\[
\l=\l(u,v)=\frac{\int_\O \left(|\nabla u|^2+u^4+\b u^2v^2\right)\,dx}{\int_\O u^2\,dx}=
\int_\O \left(|\nabla u|^2+u^4+\b u^2v^2\right)\,dx,
\]
\[
\mu=\mu(u,v)=\frac{\int_\O \left(|\nabla v|^2+v^4+\b u^2v^2\right)\,dx}{\int_\O v^2\,dx}=
\int_\O \left(|\nabla v|^2+v^4+\b u^2v^2\right)\,dx.
\]
\end{rem}
Motivated by the previous remark and by the definition of $S_\b$  (see \eqref{eq:sys}), we write, with some abuse of notations,
\begin{equation}\label{eq:esse_beta}
S_\b(u,v)=S_\b(u,v;\l(u,v),\mu(u,v)),
\end{equation}
with $\l$, $\mu$ as above.
Then, for $(u,v)\in \Mah$, we consider the initial
value problem with unknowns $U(x,t)$, $V(x,t)$,
\begin{equation}\label{eq:flux}
\left\{
\begin{array}{l}
 \partial_t(U,V)=-S_\b(U,V)\smallskip\\
 U(\cdot,t),V(\cdot,t)\in\spc\\
 U(x,0)=u(x),\quad V(x,0)=v(x),
\end{array}
\right.
\end{equation}
We have the following existence result.
\begin{lemma}\label{lem:weissler_exist}
For every $(u,v)\in \Mah_\b^{c_\infty+1}$ problem \eqref{eq:flux} has exactly one solution
\[
(U(t),V(t))\in C^1\left((0,+\infty); L^2(\O)\times
L^2(\O)\right)\cap C\left([0,+\infty); \spc\times\spc\right).
\]
Moreover, for every $t>0$, $|(U(t),V(t))|_2=1$ and
\[
\frac{d}{dt} J_\b(U(t),V(t))= -\left|S_\b(U(t),V(t))\right|^2_2\leq0.
\]
\end{lemma}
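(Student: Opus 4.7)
The plan is to recast \eqref{eq:flux} as an abstract semilinear Cauchy problem
\[
\partial_t (U,V) + A(U,V) = F_\b(U,V),\qquad (U(0),V(0))=(u,v),
\]
on the Hilbert space $L^2(\O)\times L^2(\O)$, where $A=-\Delta$ (with Dirichlet boundary condition) generates the analytic heat semigroup and
\[
F_\b(U,V)=\left(-U^3-\b UV^2+\l(U,V)U,\ -V^3-\b U^2V+\mu(U,V)V\right).
\]
First I would verify that $F_\b$ is locally Lipschitz as a map $\spc\times\spc \to L^2(\O)\times L^2(\O)$. The cubic terms $U^3$ and $UV^2$ are controlled through the Sobolev embedding $H^1_0(\O)\hookrightarrow L^6(\O)$ (valid for $N\le 3$), and the maps $(U,V)\mapsto \l(U,V)$, $(U,V)\mapsto \mu(U,V)$ are continuous on the set $\{|U|_2,|V|_2\geq 1/2\}$, with the quotient $\int(|\nabla U|^2+U^4+\b U^2V^2)/|U|_2^2$ locally Lipschitz in $H^1_0$. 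Weissler's theory of analytic semigroups (see e.g.\ Henry's book) then yields the existence, for some $T>0$, of a unique mild solution
\[
(U,V)\in C([0,T);\spc\times\spc)\cap C^1((0,T);L^2\times L^2),
\]
that is in fact a classical solution of \eqref{eq:flux} for $t>0$.

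Next I would check that the $L^2$--norms are preserved. Multiplying the equation for $U$ by $U$ and integrating gives
\[
\tfrac{1}{2}\tfrac{d}{dt}|U|_2^2 = -\!\int\!\bigl(|\nabla U|^2+U^4+\b U^2V^2\bigr)\,dx+\l(U,V)|U|_2^2,
\]
which vanishes identically by the very definition of $\l(U,V)$ recalled in Remark \ref{rem:lagr_mult}; the analogous identity holds for $V$. Since $|u|_2=|v|_2=1$, we conclude $|U(t)|_2=|V(t)|_2=1$ on the interval of existence, so the flow indeed stays on $\Mah$ (positivity of $U$, $V$ follows from a standard parabolic maximum principle applied to the equations, using that $u,v\ge 0$). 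For the energy identity, a direct computation using $\partial_t(U,V)=-S_\b(U,V)$ and the fact that $\nabla J_\b(U,V) = S_\b(U,V)+(\l U,\mu V)$ gives
\[
\tfrac{d}{dt}J_\b(U,V) = -|S_\b(U,V)|_2^2 + \tfrac{\l}{2}\tfrac{d}{dt}|U|_2^2 + \tfrac{\mu}{2}\tfrac{d}{dt}|V|_2^2 = -|S_\b(U,V)|_2^2,
\]
the last two terms vanishing by the previous step.

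Finally, to pass from local to global existence in $[0,+\infty)$, the energy identity ensures $J_\b(U(t),V(t))\le J_\b(u,v)\le c_\infty+1$ for all $t$ in the interval of existence; since the positive terms in $J_\b$ control $\|U\|^2+\|V\|^2$, the $\spc$--norm of the solution remains uniformly bounded, preventing any finite-time blow-up and permitting iterated continuation. Uniqueness follows from the local Lipschitz estimate via Gr\"onwall. The main obstacle I expect is the verification of the local Lipschitz character of the Lagrange-multiplier terms $\l(U,V)U$ and $\mu(U,V)V$ on the manifold $\Mah$, since these are nonlocal in $(U,V)$; however, this is controlled by working in a neighbourhood of $\Mah$ where $|U|_2,|V|_2$ are bounded away from zero and using that $\l,\mu$ are smooth functions of quantities which are themselves smooth on $\spc\times\spc$.
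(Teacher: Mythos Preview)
Your proposal is correct and follows essentially the same route as the paper: cast \eqref{eq:flux} as a semilinear parabolic problem with locally Lipschitz nonlinearity $\spc\times\spc\to L^2\times L^2$, invoke Weissler's local existence theory, verify $L^2$--norm conservation, compute the energy identity, and use the resulting $H^1_0$--bound to continue globally.

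The only noteworthy difference is in the treatment of the Lagrange multipliers. You take $\l(U,V)$ with the denominator $|U|_2^2$ (the first expression in Remark~\ref{rem:lagr_mult}), which makes $\tfrac{d}{dt}|U|_2^2$ vanish \emph{identically} but forces you to restrict the local existence argument to a neighbourhood of $\Mah$ where $|U|_2,|V|_2$ stay away from zero---a point you correctly flag at the end. The paper instead uses $\l(U,V)=\int(|\nabla U|^2+U^4+\b U^2V^2)\,dx$ \emph{without} the denominator, which is globally smooth (polynomial) as a map $\spc\times\spc\to\R$ and so fits Weissler's hypotheses directly; the price is that $\rho(t)=|U(t)|_2^2$ now satisfies the linear ODE $\rho'=2\l(\rho-1)$, $\rho(0)=1$, from which $\rho\equiv1$ follows by uniqueness. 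Both choices work; the paper's avoids the denominator issue entirely. Finally, note that positivity of $U,V$ is not part of this lemma in the paper---it is proved separately (Proposition~\ref{prop:flux_beta_finite}(i), via Lemma~\ref{lemma:flux_sign_preserving_beta_finite}).
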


We postpone to Section \ref{sec:construction_fluxes} the proofs of this result and of the subsequent properties.

\begin{prop}\label{prop:flux_beta_finite}
Using the notations of Lemma \ref{lem:weissler_exist}, the following properties hold
\begin{itemize}
\item[(i)] $U(t)\geq0, V(t)\geq0$, for every $(u,v)\in \Mah_\b^{c_\infty+1}$ and $t>0$;
\item[(ii)] for every fixed $t>0$ the map $(u,v)\mapsto(U(t),V(t))$ is $L^2$--continuous from $\Mah_\b^{c_\infty+1}$ into itself;
\item[(iii)] let $(u,v)\in \Mah_\b^{c_\infty+1}$, $s,t \in[0,+\infty)$ then
\[
\dist\left((U(s),V(s)),(U(t),V(t))\right)\leq |t-s|^{1/2} |J_\b(U(s),V(s))-J_\b(U(t),V(t))|^{1/2}.
\]
\end{itemize}
\end{prop}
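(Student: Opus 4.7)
The plan is to establish (i), (ii) and (iii) by standard techniques for parabolic gradient flows on constraint manifolds, using as sole PDE input the well-posedness and the energy dissipation identity already recorded in Lemma~\ref{lem:weissler_exist}. The crucial ingredient throughout is the uniform $H^1$-bound available along orbits: since $J_\b$ is non-increasing and coercive, any trajectory starting in $\Mah_\b^{c_\infty+1}$ satisfies $\|U(t)\|^2+\|V(t)\|^2 \leq 2(c_\infty+1)$ for all $t\geq 0$, and consequently the Lagrange multipliers $\l(U(t),V(t)), \mu(U(t),V(t))$ from Remark~\ref{rem:lagr_mult} admit a uniform bound $C_\b$ depending only on $\b$ and $c_\infty$.

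For (i), I would test the $U$-equation from~\eqref{eq:flux} against $-U^-$. Using the pointwise identities $U\cdot U^- = -(U^-)^2$, $U^3\cdot U^- = -(U^-)^4$ and integrating by parts on the Laplacian term, one obtains
\[
\tfrac{1}{2}\frac{d}{dt}|U^-|_2^2 = -\|\nabla U^-\|^2 - |U^-|_4^4 - \b\int_\O V^2(U^-)^2\,dx + \l(U,V)|U^-|_2^2 \leq C_\b\,|U^-|_2^2.
\]
Since $u\geq 0$ forces $U^-(0)\equiv 0$, Gronwall's inequality yields $U^-(t)\equiv 0$ for every $t\geq 0$; the same argument applied to the $V$-equation concludes (i).

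For (iii), I would fix $0\leq s\leq t$, write $U(t)-U(s) = \int_s^t \partial_\tau U\,d\tau$ in $L^2$, and apply Cauchy--Schwarz coordinatewise to obtain
\[
|U(t)-U(s)|_2^2 + |V(t)-V(s)|_2^2 \leq (t-s)\int_s^t |S_\b(U(\tau),V(\tau))|_2^2\,d\tau.
\]
The energy identity of Lemma~\ref{lem:weissler_exist} rewrites the last integral as $J_\b(U(s),V(s))-J_\b(U(t),V(t))$, and taking square roots yields (iii).

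The main obstacle is (ii), the $L^2$-continuity of the data-to-solution map. The plan is to differentiate in time $d(t)^2 := |U_1(t)-U_2(t)|_2^2 + |V_1(t)-V_2(t)|_2^2$ for two trajectories starting at $(u_i,v_i)\in\Mah_\b^{c_\infty+1}$, $i=1,2$. Using~\eqref{eq:flux}, the Laplacian terms contribute a non-positive sign after integration by parts, while the differences $U_1^3-U_2^3$ and $U_1V_1^2-U_2V_2^2$ can be controlled on the uniformly $H^1$-bounded orbits by factoring and applying the Sobolev embedding $\spc\hookrightarrow L^6$ (valid for $N=2,3$) together with interpolation between $L^2$ and $L^6$. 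The delicate point is that the Lagrange multipliers $\l$, $\mu$ are nonlocal functionals involving $\|\nabla U_i\|^2$ and $|U_i|_4^4$, so showing that their differences are $O(d(t))$ requires using the full $H^1$-information on the sublevel set rather than only $L^2$ information. Once this Lipschitz-type control is in place, one arrives at an inequality $\frac{d}{dt}d(t)^2 \leq K_\b\,d(t)^2$, and Gronwall's lemma delivers the desired $L^2$-continuous dependence on any compact time interval, which is (ii).
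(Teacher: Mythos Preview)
Your treatments of (i) and (iii) are essentially the paper's own arguments (Lemma~\ref{lemma:flux_sign_preserving_beta_finite} for positivity, and the Cauchy--Schwarz/energy computation for the H\"older estimate). The issue is (ii), and specifically the sentence ``showing that their differences are $O(d(t))$''.

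The Lagrange multiplier $\l(U,V)$ contains the term $\|\nabla U\|^2$, and this is \emph{not} $L^2$--Lipschitz on an $H^1$--bounded set: take $U_2=U_1+n^{-1/2}\phi_n$ with $\phi_n$ an $L^2$--normalised Dirichlet eigenfunction; then $|U_1-U_2|_2\to 0$ while $\big|\,\|U_1\|^2-\|U_2\|^2\big|$ need not vanish (and both $U_i$ stay $H^1$--bounded). So the bound $|\l_1-\l_2|\leq C\,d(t)$ you are aiming for is false in general, and the Gronwall inequality $\frac{d}{dt}d(t)^2\leq K_\b\,d(t)^2$ cannot be reached the way you describe. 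What one can prove (this is the paper's Lemma~\ref{lem:cont_L2_2}) is
\[
|\l(U_1,V_1)-\l(U_2,V_2)|\leq C\big(\|U_1-U_2\|+|V_1-V_2|_2\big),
\]
with the $H^1$--norm of the difference unavoidably present.

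The fix is not to discard the Laplacian contribution as merely ``non-positive''. Writing $w=(U_1-U_2,V_1-V_2)$, the Laplacian gives $-\|w\|^2$, and each of the bad terms (cubic, mixed, and Lagrange--multiplier contributions) can, via H\"older/Sobolev and Young, be estimated by $\tfrac{1}{2}\|w\|^2+C|w|_2^2$. Summing, the $\tfrac{1}{2}\|w\|^2$ is absorbed by $-\|w\|^2$, and only then do you obtain $\frac{d}{dt}|w|_2^2\leq C|w|_2^2$. This absorption is precisely the content of the paper's Lemma~\ref{lem:cont_L2_1} (hypothesis $\int_\O F(w)\cdot w\leq \tfrac12\|w\|^2+C|w|_2^2$) combined with Corollary~\ref{coro:L^2_cont_1}. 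Once you keep the dissipative term in play rather than dropping its sign, your outline for (ii) goes through.
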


All the previous results allow us to define an appropriate deformation, along with some key properties.
\begin{prop}\label{prop:deformation_beta_finite}
Let us define, under the above notations,
\[
\eta_\b:\Mah_\b^{c_\infty+1}\to \Mah_\b^{c_\infty+1}, \qquad (u,v) \mapsto \eta_\b(u,v)=(U(1),V(1)).
\]
Then $\eta_\b$ satisfies assumptions $(\eta 1)_\b$ and $(\eta 2)_\b$.
\end{prop}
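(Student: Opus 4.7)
The plan is to verify the two defining properties of $\eta_\b$ by piecing together the facts recorded in Lemma 4.2 and Proposition 4.3, plus the structural properties of the $L^2$--genus in Proposition 3.2. Property $(\eta2)_\b$ is essentially free: Lemma 4.2 says that $t\mapsto J_\b(U(t),V(t))$ is nonincreasing, so $J_\b(\eta_\b(u,v))=J_\b(U(1),V(1))\leq J_\b(u,v)$ for every $(u,v)\in\Mah_\b^{c_\infty+1}$, and in particular $\eta_\b$ genuinely maps $\Mah_\b^{c_\infty+1}$ into itself (Lemma 4.2 also guarantees $|(U(1),V(1))|_2=1$ and Proposition 4.3(i) gives the nonnegativity, so the image lies in $\Mah$).

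For $(\eta1)_\b$, fix $A\in\Feh$ with $A\subset\Mah_\b^{c_\infty+1}$. I need to check that $\eta_\b(A)$ satisfies the four requirements of $\Feh_k$. First I would establish that $\eta_\b(A)$ is $L^2$--compact, hence $L^2$--closed: by Remark 3.2 the sublevel $\Mah_\b^{c_\infty+1}$ is $L^2$--compact, so $A$ (being $L^2$--closed by $(\Feh1)$ and sitting inside that compact set) is itself $L^2$--compact; Proposition 4.3(ii) gives the $L^2$--continuity of $\eta_\b$, so $\eta_\b(A)$ is $L^2$--compact. Nonnegativity of the image is exactly Proposition 4.3(i).

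The one step that needs a genuine argument is $\sigma$--invariance. I would show $\eta_\b\circ\sigma=\sigma\circ\eta_\b$ using the symmetry of $S_\b$. By Remark 4.1 we have $\l(v,u)=\mu(u,v)$ and $\mu(v,u)=\l(u,v)$, and the nonlinearity is symmetric in $u,v$, which gives
\[
S_\b(v,u)=\sigma\bigl(S_\b(u,v)\bigr).
\]
Hence, if $(U(t),V(t))$ solves \eqref{eq:flux} with datum $(u,v)$, then $(V(t),U(t))$ satisfies $\partial_t(V,U)=-S_\b(V,U)$ with datum $(v,u)$; by the uniqueness part of Lemma 4.2 this is the unique flow from $(v,u)$, so $\eta_\b(v,u)=\sigma(\eta_\b(u,v))$. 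Combined with $\sigma(A)=A$ this yields $\sigma(\eta_\b(A))=\eta_\b(A)$.

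Finally, to get $\g_2(\eta_\b(A))\geq k$, I would apply Proposition 3.2(ii): the map $\eta_\b\vert_A$ is $L^2$--continuous (Proposition 4.3(ii)), $\sigma$--equivariant (just proved), and sign--preserving (Proposition 4.3(i)), so $\overline{\eta_\b(A)}\in\Feh_k$; since $\eta_\b(A)$ is already $L^2$--closed by the compactness argument above, $\eta_\b(A)=\overline{\eta_\b(A)}\in\Feh_k$, which concludes the proof. I do not anticipate a serious obstacle: the main technical inputs (existence, uniqueness, continuous dependence, monotonicity of the energy, and sign preservation) are all supplied by Lemma 4.2 and Proposition 4.3, and the only nontrivial verification here is the equivariance of the flow under $\sigma$, which reduces to the symmetry of $S_\b$ and uniqueness.
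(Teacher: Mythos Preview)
Your proposal is correct and follows essentially the same approach as the paper's own proof: you use Lemma 4.2 for $(\eta2)_\b$ and the well-definedness of $\eta_\b$, Proposition 4.3(i)--(ii) together with uniqueness for the $\sigma$--equivariance, $L^2$--continuity, and sign preservation, then apply Proposition 3.2(ii) and the $L^2$--compactness of $A\subset\Mah_\b^{c_\infty+1}$ to conclude $\eta_\b(A)\in\Feh_k$. The only difference is that you spell out the $\sigma$--equivariance argument in more detail (via $S_\b(v,u)=\sigma(S_\b(u,v))$), whereas the paper simply invokes uniqueness of the flow.
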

\begin{proof}
Lemma \ref{lem:weissler_exist} implies that
$$J_\b(\eta_\b(u,v))=J_\b(U(1),V(1))\leq J_\b(U(0),V(0))=J_\b(u,v)$$for every $(u,v)$, which is exactly assumption $(\eta 2)_\b$. This together with Proposition \ref{prop:flux_beta_finite}-(i) also implies that, as stated, $\eta_\b\left( \Mah_\b^{c_\infty+1}\right)\subseteq \Mah_\b^{c_\infty+1}$. Moreover we observe that $\eta_\b$ is $\sigma$-- equivariant (by the uniqueness of the initial value problem \eqref{eq:flux}) and that it is $L^2$--continuous (Proposition \ref{prop:flux_beta_finite}-(ii)). Thus Proposition \ref{prop:genus}-(ii) applies, yielding $\overline{\eta_\b(A)}\in \Feh_k$. Since $A$ is $L^2$--compact in $\Mah$ (indeed it is a closed subset of the
$L^2$--compact $\Mah_\beta^{c_\infty}$) then $\eta_\b(A)$ is closed, and therefore assumption $(\eta 1)_\b$ holds.
\end{proof}
Before moving to the infinite case, let us prove the validity of a Palais--Smale type condition. It will be the key ingredient in order to show that $(J_\b,\eta_\b)$ satisfies (PS$)_{c_\b}$ according to Definition \ref{def:palais_smale_abstrac}.
\begin{lemma}\label{lem:ps1}
Let $(u_n,v_n)\in \Mah$ be such that, as $n\to +\infty$,
\[
J_\b(u_n,v_n) \to c \qquad\text{and}\qquad |S_\b(u_n,v_n)|_2 \to 0
\]
for some $c\geq 0$.
Then there exists $(\bar u,\bar v)\in \Mah\cap (H^2(\Omega)\times H^2(\Omega))$ such that, up to a subsequence,
\[
(u_n,v_n)\to(\bar u,\bar v)\text{ strongly in }H^1_0\qquad\text{and}\qquad S_\b(\bar u,\bar v)=0.
\]
\end{lemma}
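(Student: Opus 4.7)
The plan is the standard recipe for a Palais--Smale analysis on a constrained manifold: use coercivity of $J_\b$ to pass to a weakly convergent subsequence, identify the weak limit as a critical point, and then upgrade to strong convergence via a test-function argument. Throughout, the hypothesis $N\in\{2,3\}$ will enter only through the Sobolev embedding $\spc\hookrightarrow L^6(\O)$.

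First, the very definition of $J_\b$ yields $\|u_n\|^2+\|v_n\|^2\le 2 J_\b(u_n,v_n)$, together with uniform bounds on $|u_n|_4$, $|v_n|_4$ and on $\b\int_\O u_n^2 v_n^2\,dx$. Hence $(u_n,v_n)$ is bounded in $\spc\times\spc$ and, up to a subsequence, $(u_n,v_n)\rightharpoonup(\bar u,\bar v)$ weakly in $\spc\times\spc$, strongly in $L^p\times L^p$ for every $p<2^*$, and a.e.\ in $\O$. Nonnegativity and the $L^2$--normalizations pass to the limit, so $(\bar u,\bar v)\in\Mah$.

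Next, by Remark \ref{rem:lagr_mult}, the Lagrange multipliers $\l_n:=\l(u_n,v_n)$ and $\mu_n:=\mu(u_n,v_n)$ are controlled by the bounds above, so along a further subsequence $\l_n\to\bar\l$ and $\mu_n\to\bar\mu$. The assumption $|S_\b(u_n,v_n)|_2\to 0$ rewrites as
\[
-\D u_n=\l_n u_n-u_n^3-\b u_n v_n^2+o_{L^2}(1),\quad
-\D v_n=\mu_n v_n-v_n^3-\b u_n^2 v_n+o_{L^2}(1),
\]
and both right-hand sides are bounded in $L^2(\O)$ thanks to $\spc\hookrightarrow L^6(\O)$. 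I then test the first equation against $u_n-\bar u$: the cubic and coupling terms are bounded in $L^2$ while $u_n-\bar u\to 0$ strongly in $L^2$, so the pairing of the right-hand side with $u_n-\bar u$ tends to $0$, giving $\int_\O\nabla u_n\cdot\nabla(u_n-\bar u)\,dx\to 0$. Combining this with $\int_\O\nabla\bar u\cdot\nabla(u_n-\bar u)\,dx\to 0$ (weak convergence) yields $\|u_n-\bar u\|\to 0$, and symmetrically $v_n\to\bar v$ strongly in $\spc$.

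Finally, strong $\spc$--convergence makes the functionals $\l(\cdot,\cdot)$ and $\mu(\cdot,\cdot)$ continuous at $(\bar u,\bar v)$, so $\bar\l=\l(\bar u,\bar v)$ and $\bar\mu=\mu(\bar u,\bar v)$; passing to the strong limit in the two displayed equations delivers $S_\b(\bar u,\bar v)=0$ in the sense of \eqref{eq:esse_beta}. Rewriting this as $-\D\bar u=\bar\l\bar u-\bar u^3-\b\bar u\bar v^2\in L^2(\O)$, and analogously for $\bar v$, standard $H^2$--regularity for the Dirichlet Laplacian on the smooth bounded $\O$ concludes $\bar u,\bar v\in H^2(\O)$. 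The only nontrivial point of the argument is the test-function step for strong convergence, which would fail without the Sobolev embedding $\spc\hookrightarrow L^6(\O)$ --- this is precisely what restricts both the lemma and the rest of the paper to $N\le 3$.
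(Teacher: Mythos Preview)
Your argument is correct and follows essentially the same route as the paper's proof: coercivity of $J_\b$ gives a weak $H^1_0$--limit, and testing the first component of $S_\b(u_n,v_n)$ against $u_n-\bar u$ (together with the $L^2$--boundedness of the nonlinear terms and $|u_n-\bar u|_2\to0$) yields strong $H^1_0$--convergence. The only cosmetic difference is in the ordering of the last step: the paper first bounds $|\Delta u_n|_2$ to obtain weak $H^2$--convergence and then passes to the limit in $\langle S_\b(u_n,v_n),(\phi,\psi)\rangle_2$, whereas you first pass to the limit in the equation using strong $H^1_0$--convergence (hence strong $L^6$--convergence of $u_n,v_n$ and strong $L^2$--convergence of all nonlinear terms) and then read off $\bar u,\bar v\in H^2$ from the limiting equation via elliptic regularity. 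Both are equally valid and equally short.
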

\begin{proof}
Since $J_\b(u_n,v_n)\rightarrow c$, then we immediately infer the existence of $(\bar u,\bar v) \in \Mah$ such that $(u_n,v_n)\rightharpoonup (\bar u,\bar v)$ weakly in $H^1_0$, up to a subsequence. Let us first prove the $H^1_0$--strong convergence. From the fact that $|S_\b(u_n,v_n)|_2\to0$ and that $u_n-\bar u$ is $L^2$--bounded, we deduce
$$\langle S_\b(u_n,v_n),(u_n-\bar u,0)\rangle_2 = \int_\Omega [ \nabla u_n \cdot \nabla(u_n-\bar u)+(u_n^3+\b u_nv_n^2-\lambda(u_n,v_n)u_n)(u_n-\bar u)]dx \rightarrow 0.$$
This, together with
\[
\begin{split}
\left|\int_\Omega (u_n^3+\b u_nv_n^2-\lambda(u_n,v_n)u_n)(u_n-\bar u)dx\right| & \leq |u_n^3+\b u_n v_n^2-\lambda(u_n,v_n)u_n|_2 |u_n-\bar u|_2\\
&\leq C |u_n-\bar u|_2\rightarrow 0,
\end{split}
\]
implies that $\int_\Omega \nabla u_n \cdot \nabla (u_n-\bar u) \to 0$, yielding the desired convergence. The fact that $v_n\to \bar v$ can be proved in a similar way.

Now we pass to the proof of the last part of the statement. A first observations is that
\[
|\Delta u_n|_2^2+|\Delta v_n|_2^2\leq 2|S_\b(u_n,v_n)|_2^2+2|u_n^3 + \b u_n v_n^2 -\lambda(u_n,v_n)u_n|_2^2+2|v_n^3+\b u_n^2 v_n -\mu(u_n,v_n)v_n|_2^2\leq C,
\]
which yields the weak $H^2$--convergence $u_n\rightharpoonup \bar u, v_n\rightharpoonup \bar v$ (up to a subsequence). As a consequence, we have that $\langle S_\b(u_n,v_n),(\phi,\psi)\rangle_2\rightarrow \langle S_\b(\bar u,\bar v),(\phi,\psi)\rangle_2$  for any given $(\phi,\psi)\in L^2$.
On the other hand, $|S_\b(u_n,v_n)|_2\to 0$ provides that
$$
\langle S_\b(u_n,v_n),(\phi,\psi)\rangle_2\to 0,
$$
thus $S_\b(\bar u,\bar v)=0$ and the lemma is proved.
\end{proof}

Let us turn to the definition of the deformation $\eta_\infty$. The main difficulty in this direction is that $J_\infty$ is finite if and only if $uv\equiv0$, thus any flux we wish to use must preserve the disjointness of the supports. As we said in the introduction, here the criticality condition will be given by equation \eqref{eq:eq}. In order to overcome the lack of regularity due to the presence of the positive/negative parts in the equation, we will use a suitable gradient flow, instead of a parabolic flow. More precisely we define
\[
\tS: \spc  \to  H^1_0(\O)
\]
to be the gradient of the functional $J^*(w)$ (see equation \eqref{eq:J*}), constrained to the set $\int_\O(w^+)^2=\int_\O(w^-)^2=1$. If $\invD$ denotes the inverse of $-\D$ with Dirichlet boundary conditions, then we will prove in Section \ref{sec:construction_fluxes} the following result.
\begin{lemma}\label{lem:lambda_mu_tilde}
Let $R_1,R_2>0$ be fixed. For every $w\in \spc$ such that
\[
|w^+|_2,|w^-|_2\geq R_1\qquad\text{and}\qquad \|w\|\leq R_2
\]
there exist unique $\tl=\tl(w)$, $\tmu=\tmu(w)$ such that
\[
\tS(w) = w + \invD \left(w^3 -\tl w^+ +\tmu w^-\right).
\]
Moreover, $\tl$ and $\tmu$ are Lipschitz continuous in $w$ with respect to the $L^2$--topology, with Lipschitz constants only depending on $R_1,R_2$.
\end{lemma}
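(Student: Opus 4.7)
I would begin by identifying $\tS(w)$ as the $\spc$-orthogonal projection of the unconstrained gradient $\nabla J^*(w) = w + \invD(w^3)$ onto the tangent space $T_w = \{\phi\in\spc : \int_\O w^+\phi\,dx = \int_\O w^-\phi\,dx = 0\}$. The two $\spc$-normal directions to the constraint are spanned by $\invD(w^+)$ and $\invD(w^-)$, which already forces the announced form $\tS(w) = w + \invD(w^3 - \tl w^+ + \tmu w^-)$; the multipliers $\tl,\tmu$ are then fixed by the tangency conditions $\int_\O w^\pm \tS(w)\,dx = 0$. Exploiting the duality $\langle f,\invD(u)\rangle_{\spc} = \int_\O fu\,dx$, these rewrite as the linear system
\[
\begin{pmatrix} A & -C\\ C & -B\end{pmatrix}\begin{pmatrix} \tl\\ \tmu\end{pmatrix} = \begin{pmatrix} |w^+|_2^2 + \int_\O w^+\invD(w^3)\,dx\\ -|w^-|_2^2 + \int_\O w^-\invD(w^3)\,dx\end{pmatrix},
\]
with $A = \int_\O w^+\invD(w^+)\,dx$, $B = \int_\O w^-\invD(w^-)\,dx$, $C = \int_\O w^+\invD(w^-)\,dx$.

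The key observation is that $(u,v)\mapsto \int_\O u\,\invD(v)\,dx$ is a positive definite scalar product on $L^2(\O)$ (it is the $H^{-1}$ scalar product), so $(A,B,C)$ are the Gram entries of $\{w^+,w^-\}$ in this pairing. Cauchy--Schwarz gives $AB - C^2 \geq 0$ with equality if and only if $w^+,w^-$ are linearly dependent in $L^2$; this is ruled out by their disjoint supports together with $|w^\pm|_2 \geq R_1 > 0$, so $AB - C^2 > 0$ pointwise on the admissible set. To upgrade this to a \emph{uniform} bound $AB - C^2 \geq \delta(R_1,R_2) > 0$, I would argue by contradiction: if a sequence $w_n$ satisfies the admissibility bounds and $A_nB_n - C_n^2 \to 0$, then Rellich extracts $w_n \to w_*$ in $L^2$ along a subsequence, the admissibility passes to the limit (the $\spc$-bound by weak lower semicontinuity of $\|\cdot\|$), and the $L^2$-continuity of $\invD$ forces $A_*B_* - C_*^2 = 0$, contradicting the pointwise strict inequality applied to $w_*$.

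It remains to establish $L^2$-Lipschitz dependence of the matrix and right-hand side on $w$ in the admissible set, with constants depending only on $R_2$. The quadratic quantities $A,B,C$ and $|w^\pm|_2^2$ are handled by direct bilinear estimates, using $L^2$-boundedness of $\invD$ together with $|w^\pm|_2 \leq CR_2$. The non-trivial piece is $\int_\O w^\pm\invD(w^3)\,dx$: the Sobolev embedding $\spc \hookrightarrow L^6(\O)$ (valid for $N\leq 3$) combined with H\"older's inequality yields the cubic estimate
\[
\|w^3 - v^3\|_{H^{-1}} \leq C\bigl(\|w\|^2+\|v\|^2\bigr)|w-v|_2,
\]
so that $w\mapsto \invD(w^3)$ is $L^2$-Lipschitz on $\{\|w\|\leq R_2\}$. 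Combining this with the uniform lower bound on the determinant and applying Cramer's rule delivers the claimed Lipschitz property of $\tl,\tmu$. The main obstacle is the uniform invertibility step; the assumption $|w^\pm|_2 \geq R_1$ is essential there, since without it one of $w^\pm$ could vanish in the limit and the Gram matrix would degenerate.
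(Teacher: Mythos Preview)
Your proposal is correct and follows essentially the same route as the paper: set up the linear system for $(\tl,\tmu)$ from the tangency conditions, recognise the coefficient matrix as a Gram matrix for the $H^{-1}$ pairing (the paper writes this equivalently as $\int_\O \nabla\invD f\cdot\nabla\invD g\,dx$), rule out degeneracy via Cauchy--Schwarz and the disjoint supports of $w^\pm$, upgrade to a uniform lower bound on the determinant by $L^2$--compactness of the admissible set, and finish with Cramer's rule plus direct Lipschitz estimates on the entries. The only cosmetic difference is that the paper bounds the cubic term through $\invD:L^{6/5}\to L^6$ rather than your $H^{-1}$ estimate, but these are equivalent.
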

For every $(u,v)\in \Mah_\infty^{c_\infty+1}$ we consider the initial value problem (with unknown $W=W(t,x)$)
\begin{equation}\label{eq:flux_infty}
\left\{
\begin{array}{l}
 \partial_t W=-\tS(W)\smallskip\\
 W(\cdot,t)\in\spc\\
 W(x,0)=u(x)-v(x).
\end{array}
\right.
\end{equation}
and prove existence and regularity of the solution.
\begin{lemma}\label{lem:flux_infty_exist}
For every $(u,v)\in \Mah_\infty^{c_\infty+1}$ problem \eqref{eq:flux_infty} has exactly one solution
\[
W(t)\in C^1\left((0,+\infty); \spc\right)\cap C\left([0,+\infty); \spc\right).
\]
Moreover, for every $t$, $(W^+(t),W^-(t))\in \Mah_\infty^{c_\infty+1}$ and
\[
\frac{d}{dt} J_\infty(W^+(t),W^-(t))= -\|\tS(W(t))\|^2\leq0.
\]
\end{lemma}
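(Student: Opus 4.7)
The plan is to realize $W(t)$ as the solution of an ODE in $\spc$ via the Cauchy–Lipschitz theorem, and then use energy decay together with the Lagrange-multiplier structure to preserve the constraint and extend the solution globally. Since $(u,v) \in \Mah_\infty^{c_\infty+1}$ we have $uv \equiv 0$ a.e.\ and $|u|_2 = |v|_2 = 1$, so the initial datum $W(0) = u-v$ belongs to $\spc$, satisfies $W(0)^+ = u$ and $W(0)^- = v$, and by \eqref{eq:J*} one has $J^*(W(0)) = J_\infty(u,v) \leq c_\infty + 1$, which bounds $\|W(0)\|$ by some constant $R_0$.

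First I would check that the vector field $\tS : \spc \to \spc$, given by
$$\tS(w) = w + \invD(w^3) - \tl(w)\,\invD(w^+) + \tmu(w)\,\invD(w^-),$$
is locally Lipschitz on every set of the form $U_{R_1,R_2}=\{w\in\spc:|w^\pm|_2\geq R_1,\ \|w\|\leq R_2\}$. Indeed $w\mapsto w^3$ is Lipschitz from bounded subsets of $\spc\subset L^6$ into $L^2$ (Sobolev embedding, $N\leq 3$); $\invD$ is bounded from $L^2$ into $\spc\cap H^2$; the maps $w\mapsto w^\pm$ are $1$-Lipschitz in $L^2$; and Lemma \ref{lem:lambda_mu_tilde} provides the $L^2$-Lipschitz dependence of $\tl,\tmu$ on $U_{R_1,R_2}$. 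The Cauchy–Lipschitz theorem in $\spc$ then produces a unique maximal solution $W\in C^1([0,T_{\max});\spc)$ of \eqref{eq:flux_infty}.

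The next step is to exploit that $\tl(W),\tmu(W)$ are precisely the Lagrange multipliers making $\tS(W)$ tangent, in the $\spc$ inner product, to the manifold $\{g_1=g_2=1\}$ with $g_1(w)=|w^+|_2^2$, $g_2(w)=|w^-|_2^2$. Using the identity $\langle \invD(f),\phi\rangle_{H^1_0}=\int_\O f\phi\,dx$, this tangency is equivalent to $\int_\O w^\pm\,\tS(w)\,dx=0$, and the chain rule in $L^2$ (applied to the $C^1$ function $s\mapsto (s^\pm)^2$) gives
$$\frac{d}{dt}|W^\pm(t)|_2^2 = 2\int_\O W^\pm \partial_t W\,dx = -2\int_\O W^\pm \tS(W)\,dx = 0,$$
so $|W^\pm(t)|_2\equiv 1$ is preserved along the flow. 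Decomposing $\nabla_{H^1_0}J^*(W)=W+\invD(W^3)=\tS(W)+\invD(\tl W^+-\tmu W^-)$, the same orthogonality makes the second summand annihilate $\partial_t W=-\tS(W)$, yielding $\frac{d}{dt}J^*(W(t))=-\|\tS(W)\|^2\leq 0$. Coercivity of $J^*$ in $\spc$ then bounds $\|W(t)\|$ uniformly in $t$, so $W(t)$ remains in a fixed set $U_{1,R_0}$ for all $t\in[0,T_{\max})$, forcing $T_{\max}=+\infty$.

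Finally, the pointwise disjointness $W^+W^-\equiv 0$ gives $\nabla W^+\cdot\nabla W^-=0$ a.e., hence $J^*(W)=J_\infty(W^+,W^-)$ by a direct computation; this translates the energy identity above and the inclusion $(W^+(t),W^-(t))\in\Mah_\infty^{c_\infty+1}$ into the form stated in the lemma. The hardest step is the first one: since $w\mapsto w^\pm$ is not Fr\'echet differentiable, a naive gradient-flow construction would not produce a Lipschitz vector field; what rescues the argument is precisely the nontrivial $L^2$-Lipschitz dependence of the Lagrange multipliers supplied by Lemma \ref{lem:lambda_mu_tilde}, which is exactly why that lemma is stated and proved separately.
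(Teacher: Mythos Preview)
Your proposal is correct and follows essentially the same route as the paper: local existence via Cauchy--Lipschitz in $\spc$ using the Lipschitz continuity of $\tS$ on sets $\{|w^\pm|_2\geq R_1,\ \|w\|\leq R_2\}$ (which the paper isolates as Remark~\ref{rem:lip_cont_s_tild}), conservation of $|W^\pm|_2$ from the Lagrange-multiplier orthogonality $\int W^\pm\tS(W)=0$, the energy identity $\frac{d}{dt}J^*(W)=-\|\tS(W)\|^2$ via the same decomposition, and global continuation from the resulting a~priori bound on $\|W(t)\|$. The only cosmetic difference is that the paper fixes $R_1<1$ and $R_2>2(c_\infty+1)$ at the outset and performs the energy computation directly on $J_\infty(W^+,W^-)$, whereas you work with $J^*$ and translate at the end; the content is the same.
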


Again, the proof of this result can be found in Section \ref{sec:construction_fluxes}, together with the proof of the following properties.

\begin{prop}\label{prop:flux_beta_infinite}
Using the notations of Lemma \ref{lem:flux_infty_exist}, the following properties hold
\begin{itemize}
\item[(i)] for every fixed $t>0$ the map $(u,v)\mapsto(W^+(t),W^-(t))$ is $L^2$--continuous from
$\Mah_\infty^{c_\infty+1}$ into itself;
\item[(ii)] let $(u,v)\in \Mah_\infty^{c_\infty+1}$, $s,t \in[0,+\infty)$ then\footnote{Here $C_S$ is the Sobolev constant of the embedding $H^1_0\hookrightarrow L^2$.}
\[
\dist((W^+(s),W^-(s)),(W^+(t),W^-(t))) \leq C_S |t-s|^{1/2} |J_\infty(W^+(s),W^-(s))-J_\infty(W^+(t),W^-(t))|^{1/2}.
\]
\end{itemize}
\end{prop}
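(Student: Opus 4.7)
My plan is to prove (ii) by a direct computation and then deduce (i) from (ii) by a compactness-and-uniqueness argument. The main work is (i); part (ii) is essentially a one-liner once the right pointwise inequality is observed.

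For (ii), assume without loss of generality $s\leq t$. Since $W\in C^1((0,+\infty);\spc)$ solves $\partial_\tau W=-\tS(W)$, I would bound
\[
\|W(t)-W(s)\|\leq\int_s^t\|\tS(W(\tau))\|\,d\tau\leq|t-s|^{1/2}\left(\int_s^t\|\tS(W(\tau))\|^2\,d\tau\right)^{1/2}
\]
by Cauchy--Schwarz, and then use the energy identity of Lemma \ref{lem:flux_infty_exist} to rewrite the right-most integral as $J_\infty(W^+(s),W^-(s))-J_\infty(W^+(t),W^-(t))$. To control $\dist$ by $\|W(t)-W(s)\|$ I would check the four-case pointwise inequality $(a^+-b^+)^2+(a^--b^-)^2\leq(a-b)^2$, which after integration gives $\dist((W^+(s),W^-(s)),(W^+(t),W^-(t)))\leq|W(s)-W(t)|_2$, and conclude with the Sobolev/Poincar\'e embedding $|w|_2\leq C_S\|w\|$.

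For (i), fix $t>0$ and a sequence $(u_n,v_n)\in\Mah_\infty^{c_\infty+1}$ converging to $(u,v)$ in $L^2$; set $w_n=u_n-v_n$, $w=u-v$ and let $W_n$, $W$ be the corresponding flows. Since $J_\infty$ decreases along the flow (Lemma \ref{lem:flux_infty_exist}), $\|W_n(\tau)\|^2\leq 2(c_\infty+1)$ uniformly in $n$ and $\tau$, so the trajectories stay in an $H^1_0$-bounded, hence $L^2$-relatively compact, set. By part (ii), $\{\tau\mapsto W_n(\tau)\}$ is uniformly H\"older-$1/2$ from $[0,t]$ into $L^2$, and Ascoli--Arzel\`a extracts a subsequence converging in $C([0,t];L^2)$ to some $\widetilde W$ with $\widetilde W(0)=w$.

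To conclude, I would pass to the limit in the Duhamel form
\[
W_n(\tau)=e^{-\tau}w_n+\int_0^\tau e^{-(\tau-s)}\invD\bigl(\tl(W_n)W_n^+-\tmu(W_n)W_n^--W_n^3\bigr)(s)\,ds,
\]
exploiting the $L^2$-Lipschitz dependence of $\tl,\tmu$ on their arguments (Lemma \ref{lem:lambda_mu_tilde}) together with the boundedness of $\invD$ on $L^2$ for the linear pieces. The uniqueness in Lemma \ref{lem:flux_infty_exist} then gives $\widetilde W=W$, and since the argument applies to every subsequence, the whole sequence $W_n(t)$ converges to $W(t)$ in $L^2$; continuity of $w\mapsto(w^+,w^-)$ from $L^2$ to $L^2\times L^2$ then yields (i). The main obstacle is the cubic term $W_n^3$: mere $L^2$ convergence of $W_n$ is insufficient to pass to the limit directly, but the uniform $H^1_0$ bound combined with $\spc\hookrightarrow L^6$ gives $W_n^3$ bounded in $L^2$ and convergent to $\widetilde W^3$ in every $L^q$ with $q<2$, which together with the compactness of $\invD$ from $L^2$ into $L^2$ (elliptic regularity plus Rellich) allows one to pass to the limit under the integral by dominated convergence.
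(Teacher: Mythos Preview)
Your proof of (ii) is essentially identical to the paper's: the same chain $\dist\leq|W(s)-W(t)|_2\leq C_S\|W(s)-W(t)\|$, followed by Cauchy--Schwarz on $\int_s^t\|\tS(W)\|\,d\tau$ and the energy identity.

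For (i), your compactness-and-uniqueness route is correct (the handling of the cubic term is a bit terse, but the combination ``$W_n^3$ bounded in $L^2$ and convergent in $L^q$ for $q<2$'' does yield weak $L^2$-convergence, and compactness of $\invD$ upgrades this to strong $L^2$-convergence of $\invD W_n^3$). However, it is considerably more elaborate than the paper's argument. The paper observes (Remark \ref{rem:lip_cont_s_tild}) that the full vector field $\tS$ is $L^2$-Lipschitz on the relevant sublevel set---not just $\tl,\tmu$, but all the terms, including $w\mapsto\invD w^3$ (using $|w_1^3-w_2^3|_{6/5}\leq C R_2^2|w_1-w_2|_2$ and continuity of $\invD:L^{6/5}\to L^2$). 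Once one has $|\tS(w_1)-\tS(w_2)|_2\leq L|w_1-w_2|_2$, the proof of (i) is just Gr\"onwall:
\[
\frac{d}{dt}|W_1(t)-W_2(t)|_2^2\leq 2L|W_1(t)-W_2(t)|_2^2\quad\Longrightarrow\quad |W_1(t)-W_2(t)|_2\leq e^{Lt}|W_1(0)-W_2(0)|_2,
\]
giving not merely continuity but Lipschitz continuity of the flow map in $L^2$. Your argument, by contrast, avoids ever checking Lipschitz continuity of $\tS$ in $L^2$ and would survive in settings where only continuity (plus $H^1$-uniqueness) is available---but at the cost of the Ascoli--Arzel\`a machinery, the Duhamel representation, and a delicate limit in the nonlinear term. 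Since you already invoke the $L^2$-Lipschitz dependence of $\tl,\tmu$, the missing observation is that the same regularity extends to the whole of $\tS$, after which the Gr\"onwall argument replaces everything from Ascoli--Arzel\`a onward.
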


Similarly to the case $\b$ finite, the previous properties allow to define a suitable deformation (we omit the proof since it is similar to the case $\b$ finite).
\begin{prop}
Let us define, under the above notations,
\[
\eta_\infty: \Mah_\infty^{c_\infty+1}\to \Mah_\infty^{c_\infty+1},\qquad (u,v)\mapsto\eta_\infty(u,v)=(W^+(1),W^-(1)).
\]
Then $\eta_\infty$ satisfies assumptions $(\eta1)_\infty$ and $(\eta2)_\infty$.
\end{prop}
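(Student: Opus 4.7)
The plan is to mirror the proof of Proposition \ref{prop:deformation_beta_finite} step by step, with Lemma \ref{lem:flux_infty_exist} and Proposition \ref{prop:flux_beta_infinite} playing the roles of their $\beta$--finite analogues. Assumption $(\eta 2)_\infty$ comes for free: since $(u,v)\in\Mah_\infty^{c_\infty+1}$ forces $u\cdot v\equiv 0$, we have $W^+(0)=u$ and $W^-(0)=v$; integrating the inequality $\frac{d}{dt}J_\infty(W^+(t),W^-(t))\leq 0$ from Lemma \ref{lem:flux_infty_exist} between $0$ and $1$ yields $J_\infty(\eta_\infty(u,v))\leq J_\infty(u,v)$. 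This also shows $\eta_\infty$ maps $\Mah_\infty^{c_\infty+1}$ into itself, since Lemma \ref{lem:flux_infty_exist} already records $(W^+(t),W^-(t))\in\Mah$ for every $t$.

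For $(\eta 1)_\infty$, fix $A\in\Feh_k$ with $A\subset\Mah_\infty^{c_\infty+1}$ and check the four conditions defining $\Feh_k$ for $\eta_\infty(A)$. Non--negativity is built into the definition $(W^+(1),W^-(1))$. The $\sigma$--equivariance is the key observation: starting from $(v,u)$, the initial datum of \eqref{eq:flux_infty} becomes $v-u=-(u-v)$; because $J^*$ and the constraint $\int(w^+)^2=\int(w^-)^2=1$ are both invariant under $w\mapsto -w$, the Lagrange multipliers in Lemma \ref{lem:lambda_mu_tilde} satisfy $\tl(-w)=\tmu(w)$, $\tmu(-w)=\tl(w)$, so $\tS$ is odd and $-W(t)$ solves the new initial value problem; by the uniqueness part of Lemma \ref{lem:flux_infty_exist} we get $\eta_\infty(v,u)=((-W)^+(1),(-W)^-(1))=(W^-(1),W^+(1))=\sigma(\eta_\infty(u,v))$. $L^2$--continuity of $\eta_\infty$ is exactly Proposition \ref{prop:flux_beta_infinite}-(i), so Proposition \ref{prop:genus}-(ii) gives $\overline{\eta_\infty(A)}\in\Feh_k$. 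Finally, $A$ is $L^2$--compact as a closed subset of the $L^2$--compact set $\Mah_\infty^{c_\infty+1}$ (Remark \ref{rem:compact_level_sets}), so its continuous image $\eta_\infty(A)$ is $L^2$--compact, hence closed, and the closure in the previous step is unnecessary: $\eta_\infty(A)\in\Feh_k$.

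The only step that is not purely formal is the $\sigma$--equivariance, because the scalar flow \eqref{eq:flux_infty} acts on $w=u-v$ rather than on the pair $(u,v)$ itself, and the odd symmetry must be extracted from the variational characterization of $\tl,\tmu$ in Lemma \ref{lem:lambda_mu_tilde}. Once this identity $\tS(-w)=-\tS(w)$ is in hand, everything else reduces to combining the regularity properties already established in Section \ref{sec:construction_fluxes} with the genus facts of Proposition \ref{prop:genus}, exactly as in the finite $\beta$ case.
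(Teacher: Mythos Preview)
Your proposal is correct and follows exactly the approach the paper indicates: the paper itself omits this proof, stating only that it ``is similar to the case $\b$ finite'' (Proposition \ref{prop:deformation_beta_finite}), and you have filled in precisely that analogy. The one point where the infinite case genuinely differs---the $\sigma$--equivariance, which now must pass through the oddness of $\tS$ because the flow acts on $w=u-v$ rather than on the pair---is the step you correctly single out and justify via the invariance of $J^*$ and of the constraint under $w\mapsto -w$.
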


Turning to the Palais--Smale condition, here is a preliminary result.

\begin{lemma}\label{lem:PS_infty}
Let $(u_n,v_n)\in \Mah_\infty^{c_\infty+1}$ be such that, as $n\to +\infty$,
\[
J_\infty(u_n,v_n)\to c_\infty \qquad\text{and}\qquad \|\tS(u_n-v_n)\|\to 0.
\]
Then there exists $\bar w\in \spc$ such that, up to a subsequence,
\[
u_n-v_n\to\bar w\text{ strongly in }H^1_0\qquad\text{and}\qquad \tS(\bar w)=0.
\]
\end{lemma}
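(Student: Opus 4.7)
The plan is to set $w_n=u_n-v_n$ and use the assumption $\|\tS(w_n)\|\to 0$, together with the compactness of $\invD$ applied to lower-order terms, to upgrade weak convergence to strong convergence.

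First I would observe that $(u_n,v_n)\in\Mah_\infty^{c_\infty+1}$ forces $u_n v_n\equiv 0$; combined with $u_n,v_n\geq 0$ this gives $w_n^+=u_n$, $w_n^-=v_n$, so $|w_n^\pm|_2=1$ and $J_\infty(u_n,v_n)=J^*(w_n)$. Since $J^*(w_n)\to c_\infty$ and $J^*$ is coercive, $(w_n)$ is bounded in $H^1_0$; up to a subsequence $w_n\rightharpoonup\bar w$ in $H^1_0$, $w_n\to\bar w$ strongly in $L^p$ for every $p<2^*$ (in particular $L^2$ and $L^4$), and $w_n^\pm\to\bar w^\pm$ in $L^2$, so $|\bar w^\pm|_2=1$. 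In particular the hypotheses of Lemma \ref{lem:lambda_mu_tilde} are satisfied both by the $w_n$ (with uniform $R_1,R_2$) and by $\bar w$, giving multipliers $\tl_n=\tl(w_n)$, $\tmu_n=\tmu(w_n)$ and the representation
\[
\tS(w_n)=w_n+\invD\bigl(w_n^3-\tl_n w_n^++\tmu_n w_n^-\bigr).
\]

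The next step is to control $\tl_n,\tmu_n$. Since $\|\tS(w_n)\|\to 0$ and $\|w_n^\pm\|$ is bounded, Cauchy--Schwarz gives $\langle\tS(w_n),w_n^\pm\rangle_{H^1_0}\to 0$. A direct computation, using $\nabla w_n\cdot\nabla w_n^+=|\nabla w_n^+|^2$, $\nabla w_n\cdot\nabla w_n^-=-|\nabla w_n^-|^2$ and the disjointness of the supports of $w_n^\pm$, yields
\[
\tl_n=\|w_n^+\|^2+|w_n^+|_4^4+o(1),\qquad \tmu_n=\|w_n^-\|^2+|w_n^-|_4^4+o(1),
\]
so $(\tl_n),(\tmu_n)$ are bounded; up to a further subsequence $\tl_n\to\bar\tl$, $\tmu_n\to\bar\tmu$, and the Lipschitz property of Lemma \ref{lem:lambda_mu_tilde} identifies $\bar\tl=\tl(\bar w)$, $\bar\tmu=\tmu(\bar w)$.

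To obtain strong $H^1_0$--convergence I would rewrite
\[
w_n=\tS(w_n)-\invD\bigl(w_n^3-\tl_n w_n^++\tmu_n w_n^-\bigr)
\]
and argue that each term in the right-hand side converges strongly in $H^1_0$. The hypothesis gives $\tS(w_n)\to 0$; strong $L^4$--convergence of $w_n$ to $\bar w$ and dominated convergence give $w_n^3\to\bar w^3$ in $L^{4/3}$, which embeds into $H^{-1}$ for $N=2,3$; and $w_n^\pm\to\bar w^\pm$ in $L^2\hookrightarrow H^{-1}$. Since $\invD:H^{-1}\to H^1_0$ is an isometry, the right-hand side converges strongly to $-\invD(\bar w^3-\tl(\bar w)\bar w^++\tmu(\bar w)\bar w^-)$, and therefore $w_n\to\bar w$ in $H^1_0$. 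Passing to the limit in the identity above then yields $\tS(\bar w)=0$.

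There is no serious obstacle: the argument is the standard combination of weak compactness with the compactifying effect of $\invD$. The only points that require care are the identification $w_n^\pm=(u_n,v_n)$ coming from the disjoint supports at level $J_\infty<+\infty$, and the bookkeeping needed to verify that the limit Lagrange multipliers coincide with $\tl(\bar w),\tmu(\bar w)$ -- which is precisely where the continuity statement in Lemma \ref{lem:lambda_mu_tilde} is used.
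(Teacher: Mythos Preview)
Your argument is correct, and the route is close in spirit to the paper's but organized differently. The paper does not isolate $w_n$ via the identity $w_n=\tS(w_n)-\invD(\ldots)$; instead it pairs $\tS(w_n)$ with $-\Delta(w_n-\bar w)$ in the $H^{-1}$--$H^1_0$ duality, extracts $\int_\O \nabla w_n\cdot\nabla(w_n-\bar w)\,dx\to 0$ after killing the lower-order terms by $|w_n-\bar w|_2\to 0$, and thus obtains strong $H^1_0$--convergence. Only afterwards does it use that the nonlinear part is bounded in $L^2$ and that $\invD:L^2\to H^1_0$ is compact to conclude $\tS(w_n)\to\tS(\bar w)$ in $H^1_0$, hence $\tS(\bar w)=0$. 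Your decomposition handles both conclusions at once, and your use of the $L^2$--Lipschitz continuity of $\tl,\tmu$ from Lemma~\ref{lem:lambda_mu_tilde} to identify the limiting multipliers is cleaner than the paper, which leaves this implicit. Conversely, the paper's testing argument mirrors exactly what was done for $\b<+\infty$ in Lemma~\ref{lem:ps1}, so it emphasizes the parallel between the two cases. One small comment: your explicit computation of $\tl_n=\|w_n^+\|^2+|w_n^+|_4^4+o(1)$ via the $H^1_0$--pairing is correct but not needed --- boundedness and convergence of $\tl_n,\tmu_n$ already follow from the Lipschitz statement alone, since $w_n\to\bar w$ in $L^2$ with uniform bounds on $\|w_n\|$ and $|w_n^\pm|_2=1$.
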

\begin{proof}
Let $(w_1,w_2)$ be such that, up to subsequences, $u_n\rightharpoonup w_1, v_n\rightharpoonup w_2$ in $H^1_0(\O)$. Since $J_\infty(u_n,v_n)<\infty$, then $u_n\cdot v_n=0$ and therefore also $w_1\cdot w_2=0$. Denote $w_n=u_n-v_n$ and $\bar w=w_1-w_2$ in such a way that
\[
\tS(u_n-v_n)=w_n + (-\Delta)^{-1}(w_n^3-\tl(w_n)w_n^++\tmu(w_n)w_n^-).
\]
Let us prove the $H^1_0$--convergence. First observe that $w_n-\bar w$ is bounded in $H^1_0$, which implies that $-\Delta(w_n-\bar w)$ is $H^{-1}$--bounded. Now since $\|\tS(u_n-v_n)\|\to 0$ we obtain
\begin{multline*}
\langle -\Delta(w_n-\bar w),\tS(u_n-v_n)\rangle_{H^{-1}}= \int_\Omega (\nabla w_n\cdot \nabla (w_n-\bar w) + w_n^3(w_n-\bar w)-\\
  -\tl(w_n)w_n^+(w_n-\bar w)+\tmu(w_n)w_n^-(w_n-\bar w))\,dx\to 0.
\end{multline*}
This, together with the fact that
\begin{multline*}
\left|\int_\Omega (w_n^3(w_n-\bar w)-\tl(w_n)w_n^+(w_n-\bar w)+\tmu(w_n)w_n^-(w_n-\bar w))\,dx\right| \leq \\
\leq |w_n^3-\tl(w_n)w_n^+ +\tmu(w_n)w_n^-|_2 |w_n-\bar w|_2 \to 0
\end{multline*}
gives $\displaystyle \int_\Omega \nabla w_n \cdot \nabla (w_n-\bar w)\to 0$, which yields the $H^1_0$--convergence of $w_n$ to $\bar w$.

In order to conclude the proof of the lemma it remains to show that $\tS(\bar w)=0$. Now, $w_n\to \bar w$ in $H^1_0$ implies that $w_n^3-\tl(w_n)w_n^+-\tmu(w_n)w_n^-$ is bounded in $L^2$ which, together with the fact that $(-\Delta)^{-1}$ is a compact operator from $L^2(\Omega)$ to $H^1_0(\Omega)$ provides, up to a subsequence, the convergence
\[
 (-\Delta)^{-1}(w_n^3-\tl(w_n)w_n^+-\tmu(w_n)w_n^-)\to (-\Delta)^{-1}(\bar w^3-\tl(\bar w)\bar w^+ + \tmu(\bar w)\bar w^-) \text{ in } H^1_0(\Omega).
\]
Hence also $\tS(u_n-v_n)\to \tS(\bar w)$ in $H^1_0(\Omega)$, which concludes the proof.
\end{proof}

We are ready to show that the deformations we have defined satisfy the remaining abstract properties required in Section \ref{sec:abstrac}.

\begin{prop}\label{prop:palais_smale_beta_finite}
For every $0<\b\leq+\infty$, the pair $(J_\b, \eta_\b)$ satisfies (PS$)_{c_\b}$ (according to Definition \ref{def:palais_smale_abstrac}).
\end{prop}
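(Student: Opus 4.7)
The plan is to combine the energy dissipation identity along each flow with the ``small gradient'' Palais--Smale criteria provided by Lemmas \ref{lem:ps1} and \ref{lem:PS_infty}. Fix $0<\b\le+\infty$ and pick a sequence $(u_n,v_n)\in\Mah$ with $J_\b(u_n,v_n)\to c_\b$ and $J_\b(\eta_\b(u_n,v_n))\to c_\b$. Denote by $(U_n(t),V_n(t))$ (respectively $W_n(t)$, with $W_n(0)=u_n-v_n$) the trajectory of \eqref{eq:flux} (resp.\ \eqref{eq:flux_infty}) emanating from $(u_n,v_n)$, so that $\eta_\b(u_n,v_n)=(U_n(1),V_n(1))$ (resp.\ $(W_n(1)^+,W_n(1)^-)$).

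The first step is to integrate the dissipation identity of Lemma \ref{lem:weissler_exist} (resp.\ Lemma \ref{lem:flux_infty_exist}) on $[0,1]$ to get
\[
J_\b(u_n,v_n)-J_\b(\eta_\b(u_n,v_n))=\int_0^1 |S_\b(U_n(t),V_n(t))|_2^2\,dt
\]
(and analogously $\int_0^1\|\tS(W_n(t))\|^2\,dt$ when $\b=\infty$). Since the right-hand side tends to zero, I would select $t_n\in[0,1]$ with $|S_\b(U_n(t_n),V_n(t_n))|_2\to 0$ (resp.\ $\|\tS(W_n(t_n))\|\to 0$). The monotonicity of $J_\b$ along the flow sandwiches $J_\b(U_n(t_n),V_n(t_n))$ between $J_\b(\eta_\b(u_n,v_n))$ and $J_\b(u_n,v_n)$, so it also converges to $c_\b$. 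The hypotheses of Lemma \ref{lem:ps1} (resp.\ Lemma \ref{lem:PS_infty}) are thereby met at the points $(U_n(t_n),V_n(t_n))$ (resp.\ $W_n(t_n)$), producing along a subsequence an $H^1_0$--strong limit $(\bar u,\bar v)$ (resp.\ $\bar w$) with $S_\b(\bar u,\bar v)=0$ (resp.\ $\tS(\bar w)=0$).

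Next I would transfer this convergence back to the initial times by invoking the H\"older-in-time continuity of the flow in Propositions \ref{prop:flux_beta_finite}(iii) and \ref{prop:flux_beta_infinite}(ii): since $t_n\in[0,1]$ and both energy values tend to $c_\b$, the distance $\dist((u_n,v_n),(U_n(t_n),V_n(t_n)))$ (resp.\ $\dist((u_n,v_n),(W_n(t_n)^+,W_n(t_n)^-))$) vanishes in the $L^2$--metric. A triangle inequality then yields $(u_n,v_n)\to(\bar u,\bar v)$ in $\Mah$; in the $\b=\infty$ case I would set $(\bar u,\bar v)=(\bar w^+,\bar w^-)$, noting that $|\bar u|_2=|\bar v|_2=1$ is inherited from the $L^2$--convergence and the fact that $J_\infty(u_n,v_n)<+\infty$ forces $u_n=W_n(0)^+$, $v_n=W_n(0)^-$.

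To conclude that $\bar x=(\bar u,\bar v)\in\Kah_{c_\b}$, I would note that $H^1_0$--convergence at time $t_n$ together with the continuity of $J_\b$ on its finite-valued domain (and the identity \eqref{eq:J*} when $\b=\infty$) gives $J_\b(\bar u,\bar v)=c_\b$; then the stationarity $S_\b(\bar u,\bar v)=0$ (resp.\ $\tS(\bar w)=0$) implies, by uniqueness for \eqref{eq:flux} (resp.\ \eqref{eq:flux_infty}), that the constant trajectory is the flow through $\bar x$, so $\eta_\b(\bar x)=\bar x$ and $J_\b(\eta_\b(\bar x))=c_\b$. The one delicate point to keep in mind is that the selected times $t_n$ must keep the trajectories inside the admissible sets $\Mah_\b^{c_\infty+1}$ and, for $\b=\infty$, preserve the disjoint supports required for $J_\infty$ to be finite; this is precisely what the invariance statements built into Lemmas \ref{lem:weissler_exist}, \ref{lem:flux_infty_exist} and Propositions \ref{prop:flux_beta_finite}, \ref{prop:flux_beta_infinite} guarantee.
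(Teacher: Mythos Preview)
Your argument is correct and follows essentially the same route as the paper's proof: both integrate the dissipation identity to find intermediate times where the gradient is small, apply Lemma~\ref{lem:ps1} (resp.\ Lemma~\ref{lem:PS_infty}) there, and then use the H\"older-in-time estimate of Proposition~\ref{prop:flux_beta_finite}(iii) (resp.\ Proposition~\ref{prop:flux_beta_infinite}(ii)) to pull the convergence back to the initial datum. The only cosmetic differences are that the paper fixes a single time $t$ (valid for a.e.\ $t$ by Fatou) rather than a sequence $t_n$, and it verifies $\eta_\b(\bar x)=\bar x$ via the $L^2$--continuity of $\eta_\b$ rather than via the stationarity $S_\b(\bar u,\bar v)=0$; both variants are equivalent.
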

\begin{proof}
Let first $\b<\infty$ fixed. Let $(u_n,v_n)\subset \Mah$ be a Palais--Smale sequence in the sense of Definition \ref{def:palais_smale_abstrac}, that is, $J_\b(u_n,v_n)\to c_\b$ and $J_\b(\eta_\b(u_n,v_n))\to c_\b$. Let then $(\bar u, \bar v)\in \Mah$ be such that, up to a subsequence, $(u_n,v_n) \to (\bar u,\bar v)$ in $L^2$. Define $(U_n(t), V_n(t))$ as the solution of (\ref{eq:flux}) with initial datum $(u_n,v_n)$ (recall that therefore $\eta_\b(u_n,v_n)=(U_n(1),V_n(1))$). By applying Proposition \ref{prop:flux_beta_finite}-(iii) with $(s,t)=(0,1)$ we obtain
\[
\dist((u_n,v_n),\eta_\beta(u_n,v_n)) \leq  |J_\b(u_n,v_n)-J_\b(\eta_\b(u_n,v_n))|^{1/2}\to 0,
\]
which, together with the $L^2$--continuity of $\eta_\b$, yields $(\bar u, \bar v)=\eta_\b(\bar u, \bar v)$. It only remains to show that $J_\b(\bar u, \bar v)=c_\b$.
Notice that

\[
\int_0^1 |S_\b(U_n(t),V_n(t))|_2^2 dt = J_\b(u_n,v_n)-J_\b(\eta_\b(u_n,v_n)) \to 0,
\]
(by Lemma \ref{lem:weissler_exist}) and hence, for almost every $t$, $|S_\b(U_n(t), V_n(t))|_2\to 0$
(up to a subsequence). Moreover, being $J_\b$ a decreasing functional under the heat flux, it holds $J_\b(U_n(t), V_n(t)) \to c_\b$. Now Lemma \ref{lem:ps1} applies providing the existence of $(u,v)\in \Mah$ such that $(U_n(t), V_n(t))\to (u,v)$ in $H^1_0$, and in particular $J_\b(u,v)=c_\b$. Finally the use of Proposition \ref{prop:flux_beta_finite}-(iii) with $(s,t)=(0,t)$ allows us to conclude that $(u,v)=(\bar u, \bar v)$, and the proof is completed.

The case $\b=+\infty$ can be treated similarly, substituting $(U(t),V(t))$ with $(W^+(t), W^-(t))$ and $|S_\b|_2$ with $\|\tS\|$.
\end{proof}
An uniform Palais--Smale condition also holds, in the sense of assumption (UPS). The proof of this fact is very similar to the one of Proposition \ref{prop:palais_smale_beta_finite}, and hence we omit it.
\begin{prop}
Assumption (UPS) holds.
\end{prop}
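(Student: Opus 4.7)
The plan is to adapt the strategy of Proposition \ref{prop:palais_smale_beta_finite}, noting that (UPS) is strictly weaker than (PS)$_{c_\b}$: we do not need to identify the common limit as a critical point of $J_\infty$, but only to produce one point $\bar x \in \Mah$ to which both $x_n$ and $\eta_{\b_n}(x_n)$ converge in $L^2$. Consequently the argument is markedly shorter than Proposition \ref{prop:palais_smale_beta_finite}, and in particular it bypasses the role played by Lemma \ref{lem:ps1} (and its infinite-$\b$ analogue Lemma \ref{lem:PS_infty}), since no criticality of $\bar x$ is asserted.

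First I would extract the common subsequential limit. From $J_{\b_n}(u_n,v_n) \to c_\infty$ and the pointwise bound $J_0 \le J_{\b_n}$ one has, for $n$ large, $J_0(u_n,v_n) \le c_\infty + 1$; by coercivity this forces $(u_n)_n$ and $(v_n)_n$ to be bounded in $H^1_0(\O)$. Rellich--Kondrachov then yields a subsequence along which $x_n \to \bar x = (\bar u, \bar v)$ in $L^2$, and the $L^2$--closedness of $\Mah$ ensures $\bar x \in \Mah$.

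The second and decisive step is a direct application of Proposition \ref{prop:flux_beta_finite}-(iii) with $s=0$, $t=1$, and $\b = \b_n$ (legitimate because $J_{\b_n}(x_n) \le c_\infty + 1$ eventually):
\[
\dist\bigl(x_n, \eta_{\b_n}(x_n)\bigr) \le \bigl| J_{\b_n}(x_n) - J_{\b_n}(\eta_{\b_n}(x_n)) \bigr|^{1/2}.
\]
By the hypotheses of (UPS), both terms on the right-hand side tend to $c_\infty$, so the right-hand side converges to zero. Combined with $x_n \to \bar x$ in $L^2$ this immediately forces $\eta_{\b_n}(x_n) \to \bar x$ in $L^2$ as well, which is precisely the conclusion of (UPS).

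The argument faces essentially no obstacle; it goes through exactly because the multiplicative constant in Proposition \ref{prop:flux_beta_finite}-(iii) is independent of $\b$, so the estimate passes uniformly to the limit $\b_n \to +\infty$. The genuinely delicate issue, namely verifying $\bar x \in \Kah_\infty$, is precisely what (UPS) was designed \emph{not} to require, which is the whole reason for introducing the enlarged set $\Cah_*$ in Section \ref{sec:abstrac}.
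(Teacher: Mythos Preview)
Your proposal is correct and matches the paper's intended argument: the paper omits the proof, noting it is very similar to that of Proposition \ref{prop:palais_smale_beta_finite}, and the relevant portion of that proof is precisely the application of Proposition \ref{prop:flux_beta_finite}-(iii) with $(s,t)=(0,1)$ that you carry out. Your observation that the constant in Proposition \ref{prop:flux_beta_finite}-(iii) is independent of $\b$ is exactly the point that makes the argument go through uniformly as $\b_n\to+\infty$.
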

The properties collected in this section show that Theorems \ref{thm:abstr_b} and \ref{thm:abstr_3} apply to this framework. Thus we are in a position to conclude the proofs of the results stated in the introduction.
\begin{proof}[End of the proof of Theorem \ref{thm:main1}] As Theorem \ref{thm:abstr_b} holds, the last thing we have to check is that the critical set $\Kah_\b$ (according to \eqref{eq:critical_sets_abstract}) coincides with the one defined in the introduction. Again, we only present a proof in the case $\b<+\infty$. We have to show that $J_\b(u,v)=J_\b(U(1),V(1))$ if and only if $S_\b(u,v)=0$. But this readily follows from the fact that, for $t\in[0,1]$,
\[
\dist((u,v),(U(t),V(t)))^2\leq\int_0^1\left|S_\b(U(\tau),V(\tau))\right|_2^2d\tau= J_\b(u,v)-J_\b(U(1),V(1)),
\]
once one observes that, by uniqueness, $(U(t),V(t))\equiv (u,v)$ if and only if $S_\b(u,v)=0$. Finally, the
$H^1$--compactness of $\Kah_\b$ comes directly from Lemmas \ref{lem:ps1} and \ref{lem:PS_infty}.
\end{proof}
%

%
%
\begin{proof}[End of the proof of Theorem \ref{thm:main3}] As Theorem \ref{thm:abstr_3} holds, the result
is proved once we show that $\Cah_*\subset \Kah_*$. To this aim, let us consider $(u,v)\in\Cah_*$ and let,
by definition, $(u_n,v_n)\in\Mah$ be such that $(u_n,v_n)\to(u,v)$ in $L^2$, $J_{\b_n}(u_n,v_n)\to c_\infty$ and $J_{\b_n}(U_n(1),V_n(1))\to c_\infty$. By arguing exactly as in the proof of Proposition \ref{prop:palais_smale_beta_finite}, we infer the existence of a $0\leq t\leq1$ such that it holds
$(U_n(t),V_n(t))\to(u,v)$, $J_{\b_n}(U_n(t),V_n(t))\to c_\infty$ and $|S_{\b_n}(U_n(t),V_n(t))|_2\to 0$.
Therefore $(u,v)\in\Kah_*$.
\end{proof}

\begin{proof}[Proof of Corollary \ref{coro:intro_last}]
The only thing left to prove is that,
given any $(u_n,v_n)\in \Mah$ and $\b_n\to+\infty$ such that $(u_n,v_n)\to(\bar u,\bar v)$ in $L^2$, with
\(
J_{\b_n}(u_n,v_n) \to c_\infty,\ |S_{\b_n}(u_n,v_n)|_2 \to 0
\), then in fact $(u_n,v_n)\to(\bar u,\bar v)$ in $H^1\cap C^{0,\alpha}$.
We shall prove that the sequence $(u_n,v_n)$ is uniformly bounded in the $L^\infty$--norm. This, together with the fact that, by assumption,
\[
   \begin{array}{l}
   -\D u_\b + u_\b^3+\b u_\b v_\b^2-\l_\b u_\b=h_\b\to0\quad\text{in }L^2\smallskip\\
   -\D v_\b + v_\b^3+\b u_\b^2v_\b-\mu_\b v_\b=k_\b\to0\quad\text{in }L^2,\\
 \end{array}
\]
allows us to apply Theorem 1.4 in \cite{nttv1}, which provides the desired result.

Since $J_{\b_n}(u_n,v_n)\rightarrow c_\infty$, we infer the existence of $\l_{\max},\mu_{\max} \in \R$ such that, up to a subsequence,
\[
(u_n,v_n)\rightharpoonup (\bar u,\bar v) \text{ in } H^1_0, \qquad \l(u_n,v_n)\leq \l_{\max}, \ \mu(u_n,v_n)\leq \mu_{\max}, \ \ \forall n.
\]
In order to prove uniform bounds in the $L^\infty$--norm, we shall apply a Brezis--Kato type argument to the sequence $(u_n,v_n)$. Suppose $u_n \in L^{2+2\d}(\O)$ for some $\d>0$; we can test with $u_n^{1+\d}$ the inequality
\[
-\Delta u_n \leq \l(u_n,v_n) u_n +h_n,
\]
obtaining
\[
\frac{1+\d}{\left(1+\frac{\d}{2}\right)^2}\int_\O |\nabla(u_n^{1+\frac{\d}{2}})|^2\,dx\leq
\l(u_n,v_n)\int_\O u_n^{2+\d}\,dx+\int_\O h_n u_n^{1+\d}\,dx.
\]
Hence, by Sobolev embedding we have\footnote{Here $C_S$ denotes the Sobolev constant of the embedding $H^1_0\hookrightarrow L^6$.}
\[
|u_n|_{6+3\d}\leq \left[ C_S^2 \frac{\left(1+\frac{\d}{2}\right)^2}{1+\d} \right]^{\frac{1}{2+\d}}
\left[\l(u_n,v_n)\int_\O u_n^{2+\d}\,dx+\int_\O h_n u_n^{1+\d}\,dx\right]^{\frac{1}{2+\d}}.
\]
Now apply H\"older inequality to the right hand side; provided $\int_\O u_n^{2+2\d}\,dx\geq 1$, there holds
\[
\l(u_n,v_n)\int_\O u_n^{2+\d}\,dx \leq \l_{\text max} |\O|^{1/2} |u_n|_{2+2\d}^{2+\d} \qquad\text{and}\qquad
\int_\O h_n u_n^{1+\d}\,dx \leq |h_n|_2 |u_n|_{2+2\d}^{2+\d}
\]
hence, since $|h_n|_2\to 0$, we have proved the existence of a constant $C$, not depending on $n$ and $\d$ such that
\[
|u_n|_{6+3\d}\leq \left[ C_S^2 \frac{\left(1+\frac{\d}{2}\right)^2}{1+\d} \right]^{\frac{1}{2+\d}} |u_n|_{2+2\d}.
\]
Now iterate, letting
\[
\d(1)=2, \ 2+2\d(k+1)=6+3\d(k) \qquad\text{hence}\qquad \d(k)\geq\left(\frac{3}{2}\right)^{k-1}.
\]
If there exist infinite values $\delta(k)$ such that $\int_\O u_n^{2+2\d(k)}\,dx<1$, the $L^\infty$--estimate is trivially proved; otherwise the previous estimates hold for $\d(k)$ sufficiently large providing, for every $p>1$,
\[
|u_n|_p \leq C' + \prod_{k=1}^{+\infty}\left[ C \frac{\left(1+\frac{\d(k)}{2}\right)^2}{1+\d(k)} \right]^{\frac{1}{2+\d(k)}} |u_n|_6.
\]
The last inequality provides the desired $L^\infty$ estimate since it is easy to verify that
\[
\sum_{k=1}^\infty \frac{1}{2+\d(k)}\log\left[ C \frac{\left(1+\frac{\d(k)}{2}\right)^2}{1+\d(k)} \right] < \infty, \qquad\text{if}\qquad
\d(k)\geq\left(\frac{3}{2}\right)^{k-1}.
\]
The same calculations clearly hold for $v_n$.
\end{proof}
We conclude by giving a proof of Theorem \ref{teo:ground_states} as a particular case of the theory we developed (although, as we mentioned, it is possible to give a more elementary proof of this result).
\begin{proof}[Proof of Theorem \ref{teo:ground_states}]
The key remark in this framework is that, in fact, for every $0<\b\leq+\infty$ we can write
\[
c^1_\b=\inf_{(u,v)\in \Mah} J_\b(u,v).
\]
More precisely,
\[
(u_\b,v_\b)\text{ achieves }c^1_\b\quad\implies\quad A_\b=\{(u_\b,v_\b),(v_\b,u_\b)\}\text{ is an optimal set
for }J_\b\text{ at }c^1_\b.
\]
Now, the $L^2$--convergence of the minima follows by the convergence of the optimal sets (Theorem \ref{thm:main2}),
while the $H^1\cap C^{0,\alpha}$--convergence is obtained as in the previous proof.
\end{proof}

\section{Construction of the flows}\label{sec:construction_fluxes}

\begin{proof}[Proof of Lemma \ref{lem:weissler_exist}]
In order to prove local existence, we want to apply Theorem 2,
b) in \cite{weissler}, to which we refer for further details.
Let us rewrite the problem as
\[
w'=\Delta w + F(w),
\]
where $w=(U,V)$, $w'=\partial_t(U,V)$, $\Delta$ is intended in the vectorial sense and
$F$ contains all the remaining terms. Using the notations
of \cite{weissler} we set $E=L^2(\O)\times L^2(\O)$ and
$E_F=\spc\times\spc$. We obtain that $e^{t\Delta}$ is an
analytic semigroup both on $E$ and on $E_F$,
satisfying\footnote{By using for example the expansion in
eigenfunctions of $-\Delta$ in $H^1_0$, one can easily obtain the
required inequality with $C=(2e)^{-1/2}$.}
\[
\|e^{t\Delta}w_0\|\leq C t^{-1/2}|w_0|_2\qquad\text{for every } w_0\in E,
\]
so that $(2.1)$ in \cite{weissler} holds with $a=1/2$. Moreover,
since all the terms in $F$ are of polynomial type, it is
easy to see that $F:E_F\to E$ is locally lipschitz
continuous, and
\[
|F(w_0)-F(z_0)|_2\leq\ell(r)\|w_0-z_0\|,\text{ with }
\ell(r)=O(r^p)\text{ as }r\to+\infty,
\]
whenever $\|w_0\|\leq r,\,\|z_0\|\leq r$ (for example, arguing as in Lemma \ref{lem:cont_L2_2}, the previous
estimate holds for $p=4$). Now, choosing $b=1/(2p)<a$, it is
immediate to check that
\[
\ell(r)=O\left(r^{(1-a)/b}\right),
\]
thus $(2.3)$ in \cite{weissler} is also satisfied. In order to apply
Theorem 2, b)  the last assumption we need to verify is that, for every $w_0\in H^1_0$ (which is our regularity assumption for the initial data in \eqref{eq:flux}), it holds
\[
\limsup_{t\downarrow0}\|t^be^{t\Delta}w_0\|=0;
\]
but this follows recalling that $\|e^{t\Delta}w_0\|\leq \|w_0\|$\footnote{Again, one can obtain this
inequality expanding in eigenfunctions.}. Therefore Theorem 2, b) and Corollary 2.1, b) and c) in \cite{weissler} apply, providing the existence of a (unique) maximal solution of \eqref{eq:flux}
\[
(U(t),V(t))\in C^1\left((0,T_{\max}); L^2(\O)\times
L^2(\O)\right)\cap C\left([0,T_{\max}); \spc\times\spc\right),
\]
with the property that if $T_{\max}<+\infty$ then $\|(U,V)\|\to+\infty$ as $t\to T_{\max}^-$.

Now we want to prove that $(U(t),V(t))\in \Mah$ in its interval of definition. To this aim let us consider the $C^1$--function
\[
\rho(t)=\int_\O U^2(x,t)\,dx,
\]
which is continuous at $t=0$. By a straight calculation one can see that it verifies the initial value problem
\[
\left\{
\begin{array}{l}
 \rho'(t)=a(t)(\rho(t)-1)\\
 \rho(0)=1,
\end{array}
\right.
\]
where $a(t)=2\l(U(t),V(t))$ is a continuous function. Since the previous initial value problem admits only one solution, then $\rho(t)\equiv1$ in $[0,T_{\max})$ (and an analogous result holds for $V(t)$).
%
%
Finally, by integrating by parts (by standard regularity, $(U(t),V(t))$ belongs to $H^2$ for $t>0$) and  by using the fact that $\int_\O UU_t\,dx=\int_\O VV_t\,dx=0$, one can easily obtain
\[
\frac{d}{dt} J_\b(U(t),V(t))= \int_\O\left(U_t,V_t\right)\cdot S_\b(U,V)\,dx= - \left|S_\b(U,V)\right|^2_2\leq0.
\]
This implies
\begin{equation}\label{eq:weissler_exist}
\|(U(t),V(t))\|^2\leq 2J_\b(U(t),V(t))\leq 2J_\b(u,v)<+\infty
\end{equation}
for every $t<T_{\max}$, which provides $T_{\max}=+\infty$.
\end{proof}
\begin{rem}\label{rem:flux_control}
Given $(u,v)\in \Mah$ let $(U,V)$ be the corresponding solution of
\eqref{eq:flux}. By taking in consideration inequality \eqref{eq:weissler_exist} we see that the quantities $\|(U(t),V(t))\|$, $|(U(t),V(t))|_p$ (with $p\leq6$), $\l(U(t),V(t))$ and $\mu(U(t),V(t))$ are bounded by constants which only depend on $J_\b(u,v)$ (in particular, they are independent of $t$).
\end{rem}
\begin{lemma}\label{lemma:flux_sign_preserving_beta_finite}
Let $c\in C\left([0,T];L^{3/2}(\O)\right)$ and let $U\in
C^1\left((0,T]; L^2(\O)\right)\cap C\left([0,T]; \spc\right)$ be a
solution of
\[
\partial_t U-\Delta U=c(x,t) U,\quad U(\cdot,t)\in\spc, U(x,0)\geq0.
\]
Then $U(x,t)\geq0$ for every $t$.
\end{lemma}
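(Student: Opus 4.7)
The plan is to derive a differential inequality for the $L^2$--norm of the negative part $U^-$ and conclude with Gronwall. The natural test function is $-U^-(\cdot,t)\in\spc$.

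First I would establish the basic energy identity. Since $U\in C^1((0,T];L^2(\O))\cap C([0,T];\spc)$, the map $t\mapsto U^-(t)$ is continuous into $\spc$ and differentiable into $L^2$ for $t>0$. Using the Stampacchia chain rule (which gives $\nabla U\cdot \nabla U^- = -|\nabla U^-|^2$ a.e.\ and $\partial_t U\cdot U^- = -\frac12\frac{d}{dt}(U^-)^2$ a.e.), testing the weak form of the PDE against $-U^-$ yields, for a.e.\ $t\in(0,T]$,
\[
\frac12\frac{d}{dt}|U^-(t)|_2^2 + \|U^-(t)\|^2 = \int_\O c(x,t)\,(U^-(x,t))^2\,dx,
\]
where I used $UU^- = -(U^-)^2$ since $U^+U^-\equiv 0$.

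Next I would absorb the right-hand side. Split $c=c\,\chi_{\{|c|\le M\}}+c\,\chi_{\{|c|>M\}}$, which gives
\[
\int_\O c\,(U^-)^2\,dx \le M|U^-|_2^2 + \bigl|c\,\chi_{\{|c|>M\}}\bigr|_{3/2}\,|(U^-)^2|_3
\le M|U^-|_2^2 + C_S^2\bigl|c\,\chi_{\{|c|>M\}}\bigr|_{3/2}\,\|U^-\|^2,
\]
by H\"older (exponents $3/2$ and $3$) and the Sobolev embedding $\spc\hookrightarrow L^6(\O)$, which holds since $N\le 3$. The assumption $c\in C([0,T];L^{3/2}(\O))$ implies that $\{c(\cdot,t):t\in[0,T]\}$ is compact in $L^{3/2}$, hence uniformly $3/2$--integrable: for every $\eps>0$ there exists $M=M(\eps)$ such that $\sup_{t\in[0,T]}|c(\cdot,t)\chi_{\{|c|>M\}}|_{3/2}\le\eps$. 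Choosing $\eps$ with $C_S^2\eps\le 1/2$ and writing $M=M(\eps)$, this absorbs the $\|U^-\|^2$ term into the left-hand side, leaving
\[
\frac{d}{dt}|U^-(t)|_2^2 \le 2M\,|U^-(t)|_2^2.
\]

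Finally, since $U(\cdot,0)\ge 0$ gives $|U^-(0)|_2=0$, and by continuity of $U^-(\cdot,t)$ in $\spc$ (hence in $L^2$) at $t=0$ we can integrate the inequality from $0$ to $t$ and apply Gronwall to obtain $|U^-(t)|_2=0$ for every $t\in[0,T]$, which is the claim. The only delicate step is the treatment of the zero--order term with the low integrability of $c$: this is where the $L^{3/2}$ splitting together with the $N\le 3$ Sobolev embedding is essential, and the uniformity in $t$ in the choice of $M$ comes precisely from the continuity of $c$ into $L^{3/2}$.
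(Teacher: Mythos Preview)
Your proof is correct and follows essentially the same route as the paper: test the equation with $-U^-$, split the coefficient $c$ into a bounded part and an $L^{3/2}$--small part so that the latter can be absorbed via the Sobolev embedding $H^1_0\hookrightarrow L^6$, and conclude by Gronwall. Your treatment is in fact slightly more explicit, since you justify the uniform-in-$t$ choice of the truncation level $M$ via the compactness of $\{c(\cdot,t):t\in[0,T]\}$ in $L^{3/2}$, whereas the paper simply asserts a decomposition $|c|\le k+c_1$ with $|c_1|_{3/2}$ small.
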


\begin{proof}
Since $c:[0,T]\to L^{3/2}$ we can write $|c(x,t)|\leq k + c_1(x,t)$,
where $k$ is constant and $|c_1|_{3/2}<1/C^2_S$ (here $C_S$
denotes the Sobolev constant of the embedding $\spc\hookrightarrow
L^6(\O)$). Let
\[
\rho(t)=\frac12\int_\O |U^-(x,t)|^2\,dx.
\]
We obtain that $\rho\in C^1((0,T])\cap C([0,T])$ and $\rho(0)=0$; moreover,
\[
\begin{split}
\rho'(t) &= -\int_\O U^-\partial_t U\,dx = -\int_\O\left(U^-\Delta U + c(x,t) |U^-|^2\right)\,dx\\
      &\leq -\|U^-\|^2 + k|U^-|_{2}^2 + |c_1|_{3/2}|U^-|_{6}^2
      \leq \left(-1+C_S^2|c_1|_{3/2}\right)\|U^-\|^2 + k|U^-|_{2}^2\\
      &\leq 2k\rho(t).
\end{split}
\]
Thus we deduce that $\rho(t)\leq e^{2k}\rho(0)$ and the lemma follows.
\end{proof}
\begin{lemma}\label{lem:cont_L2_1}
Let $w\in C^1\left((0,+\infty); L^2(\O)\times
L^2(\O)\right)\cap C\left([0,+\infty); \spc\times\spc\right)$ be a solution of
\begin{equation}\label{eq:abtr_cont}
\left\{
\begin{array}{l}
 \partial_t w -\Delta w = F(w)\\
 w(0)=w_0,
\end{array}
\right.
\end{equation}
where there exists a positive constant $C$ such that
\begin{equation}\label{ineq_cont_L2_1}
\int_\O F(w)\cdot w\,dx\leq \frac{1}{2}\|w\|^2
+C|w|^2_2\quad\text{for every }t\geq 0.
\end{equation}
Then there exists a constant $C(t)$ such that
\[
|w(t)|_2\leq C(t) |w_0|_2.
\]
\end{lemma}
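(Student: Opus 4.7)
The plan is to run the standard energy estimate for $|w(t)|_2^2$ and close it via Gronwall's inequality. The hypothesis \eqref{ineq_cont_L2_1} is tailor-made so that the $-\|w\|^2$ coming from the Dirichlet form is absorbed, leaving only a $C|w|_2^2$ term on the right-hand side.

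First I would differentiate $t\mapsto |w(t)|_2^2$. Thanks to the regularity $w\in C^1((0,+\infty);L^2\times L^2)$, one has for $t>0$
\[
\tfrac{1}{2}\tfrac{d}{dt}|w(t)|_2^2 \;=\; \langle w(t),\partial_t w(t)\rangle_2,
\]
and since $w(\cdot,t)\in H^1_0\times H^1_0$, substituting $\partial_t w=\Delta w+F(w)$ and integrating by parts (interpreting $\langle\Delta w,w\rangle_2$ as the $H^{-1}$--$H^1_0$ duality, which here reduces to $-\|w\|^2$) gives
\[
\tfrac{1}{2}\tfrac{d}{dt}|w(t)|_2^2 \;=\; -\|w(t)\|^2 + \int_\O F(w(t))\cdot w(t)\,dx.
\]

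Next I would plug in the hypothesis \eqref{ineq_cont_L2_1} to get
\[
\tfrac{1}{2}\tfrac{d}{dt}|w(t)|_2^2 \;\leq\; -\|w(t)\|^2 + \tfrac{1}{2}\|w(t)\|^2 + C|w(t)|_2^2 \;\leq\; C|w(t)|_2^2,
\]
since the $-\tfrac{1}{2}\|w\|^2$ term is nonpositive and can be discarded. This yields the differential inequality $\tfrac{d}{dt}|w|_2^2 \leq 2C|w|_2^2$ on $(0,+\infty)$.

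Finally I would apply Gronwall's lemma. Because $w\in C([0,+\infty);H^1_0\times H^1_0)$, the map $t\mapsto |w(t)|_2^2$ is continuous on $[0,+\infty)$, so the differential inequality integrates up to the initial datum and gives
\[
|w(t)|_2^2 \;\leq\; e^{2Ct}|w_0|_2^2,
\]
which is the claimed estimate with $C(t)=e^{Ct}$. There is no substantial obstacle here; the only small point requiring care is the justification of the integration by parts at points $t>0$ where only $C^1$ regularity into $L^2$ is available, but this is standard given that $w$ takes values in $H^1_0$ continuously and the equation $\partial_t w-\Delta w=F(w)$ holds in $L^2$, so $\Delta w\in L^2$ a posteriori and the duality pairing reduces to an honest integration by parts.
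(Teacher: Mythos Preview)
Your proof is correct and essentially identical to the paper's: both differentiate $E(t)=\tfrac12|w(t)|_2^2$, integrate by parts to obtain $E'(t)=-\|w\|^2+\int_\O F(w)\cdot w\,dx$, apply hypothesis \eqref{ineq_cont_L2_1}, and close with Gronwall to get $E(t)\le e^{2Ct}E(0)$. Your additional remarks on the regularity needed to justify the computation are a welcome clarification of what the paper dismisses as ``a straightforward computation''.
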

\begin{proof}
Let
\[
E(t)=\frac12\int_\O w^2(t)\,dx.
\]
A straightforward computation yields
\[
 E'(t) = -\int_\O |\nabla w|^2\,dx+\int_\O F(w)\cdot w\,dx
       \leq -\frac{1}{2}\|w\|^2+C |w|^2_2
       \leq 2C  E(t),
\]
from which we obtain $E(t)\leq e^{2C t} E(0)$, concluding the proof.
\end{proof}
\begin{lemma}\label{lem:cont_L2_2}
For $i=1,2$ take $(u_i,v_i)\in \Mah$ and let $(U_i(t),V_i(t))$ be the corresponding solution of \eqref{eq:flux}. There exists a constant $C$, only depending on $\max_i J_\b(u_i,v_i)$, such that, for every $t$
\begin{enumerate}
 \item $|\l(U_1(t),V_1(t))-\l(U_2(t),V_2(t))|\leq C\left(\|U_1(t)-U_2(t)\|+|V_1(t)-V_2(t)|_{2}\right)$;
 \item $|\mu(U_1(t),V_1(t))-\mu(U_2(t),V_2(t))|\leq C\left(\|V_1(t)-V_2(t)\|+|U_1(t)-U_2(t)|_{2}\right)$.
\end{enumerate}
\end{lemma}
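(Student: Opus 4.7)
The plan is to establish only the estimate for $\l$, since the one for $\mu$ follows by exchanging the roles of $U$ and $V$ in every step. Suppressing the $t$-dependence, I would start from the formula in Remark \ref{rem:lagr_mult} and split
\[
\l(U_1,V_1) - \l(U_2,V_2) = \int_\O \bigl(|\nabla U_1|^2 - |\nabla U_2|^2\bigr)dx + \int_\O (U_1^4 - U_2^4)\,dx + \b\int_\O (U_1^2V_1^2 - U_2^2V_2^2)\,dx,
\]
and then bound each of the three integrals separately. The uniform bounds on $\|U_i(t)\|$, $|U_i(t)|_6$ and $|V_i(t)|_6$ provided by Remark \ref{rem:flux_control} (which depend only on $\max_i J_\b(u_i,v_i)$) will be used repeatedly to absorb the ``large'' factors produced by the algebraic identities below.

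For the gradient piece, the identity $|\nabla U_1|^2 - |\nabla U_2|^2 = \nabla(U_1-U_2)\cdot\nabla(U_1+U_2)$ combined with Cauchy--Schwarz yields a bound of the form $\|U_1-U_2\|\,\|U_1+U_2\| \leq 2C\|U_1-U_2\|$. For the pure quartic piece I would factor $U_1^4 - U_2^4 = (U_1-U_2)(U_1^3 + U_1^2U_2 + U_1U_2^2 + U_2^3)$ and apply H\"older with conjugate exponents $(2,2)$, noting that each cubic monomial lies in $L^2$ with norm controlled by a product of three $L^6$--norms; this yields a bound by $C|U_1-U_2|_2$. For the mixed quartic piece I would split
\[
U_1^2V_1^2 - U_2^2V_2^2 = (U_1-U_2)(U_1+U_2)V_1^2 + U_2^2(V_1-V_2)(V_1+V_2)
\]
and estimate each summand by H\"older with exponents $(2,6,3)$, using that $|V_i^2|_3 = |V_i|_6^2$; the outcome is a bound by $C\bigl(|U_1-U_2|_2 + |V_1-V_2|_2\bigr)$.

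Summing the three contributions and absorbing $|U_1-U_2|_2$ into $\|U_1-U_2\|$ via the Poincar\'e inequality gives the first inequality of the statement; the second follows by symmetry. I do not anticipate any serious obstacle: the argument is a chain of elementary H\"older estimates together with the a priori bounds on the flow supplied by Remark \ref{rem:flux_control}. The one point worth double-checking is the slight asymmetry in the two norms on the right-hand side: since $V$ enters the defining expression of $\l$ only through the zeroth-order product $\b U^2V^2$, no gradient of $V$ is ever needed, which is precisely why $|V_1-V_2|_2$ (and not $\|V_1-V_2\|$) suffices.
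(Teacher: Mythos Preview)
Your proposal is correct and follows essentially the same approach as the paper: split $\l(U_1,V_1)-\l(U_2,V_2)$ into the gradient, pure quartic, and mixed quartic pieces, factor each algebraically, apply H\"older together with the uniform $H^1$ (hence $L^6$) bounds from Remark \ref{rem:flux_control}, and finish with Poincar\'e. The only cosmetic differences are the precise factorizations used (the paper writes $U_1^4-U_2^4=(U_1^2+U_2^2)(U_1+U_2)(U_1-U_2)$ and splits the mixed term as $U_1^2(V_1^2-V_2^2)+V_2^2(U_1^2-U_2^2)$), which are algebraically equivalent to yours.
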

\begin{proof}
We prove only the first inequality, since the second one is analogous. We have
\[
\begin{split}
 |\l(U_1,V_1&)-\l(U_2,V_2)| \leq \int_\O\left||\nabla U_1|^2-|\nabla U_2|^2\right|\,dx+
    \int_\O\left|U_1^4-U^4_2\right|\,dx + \b\int_\O\left|U_1^2V_1^2-U_2^2V_2^2 \right|\,dx\\
  &\leq \int_\O\left|\nabla U_1+\nabla U_2\right| \left|\nabla U_1-\nabla U_2\right|\,dx
     +\int_\O(U_1^2+U_2^2)|U_1+U_2| \left|U_1-U_2\right|\,dx +\\
  &\qquad\qquad\qquad+ \b\int_\O U_1^2|V_1+V_2|\left|V_1-V_2\right|\,dx+\b\int_\O V_2^2|U_1+U_2|
     \left|U_1-U_2\right|\,dx\\
  &\leq \|U_1+U_2\| \|U_1-U_2\| + |(U_1^2+U_2^2)(U_1+U_2)|_{2} |U_1-U_2|_{2}+ \\
  &\qquad\qquad\qquad+
  |\b U_1^2(V_1+V_2)|_{2} |V_1-V_2|_{2}+ |\b V_2^2(U_1+U_2)|_{2} |U_1-U_2|_{2},
\end{split}
\]
from which we can conclude the proof by recalling Remark \ref{rem:flux_control} and Poincar\'e's inequality.
\end{proof}
\begin{coro}\label{coro:L^2_cont_1}
For $i=1,2$ consider $(u_i,v_i)\in \Mah$ and let $(U_i(t),V_i(t))$ be the corresponding solution of \eqref{eq:flux}. There exists a constant $C=C(t)$, depending on $t$ (and also on $\max_i J_\b(u_i,v_i)$) such that
\[
|(U_1(t),V_1(t))-(U_2(t),V_2(t))|_2\leq C(t) |(u_1,v_1)-(u_2,v_2)|_2.
\]
\end{coro}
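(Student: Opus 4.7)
The plan is to reduce the statement to an application of Lemma \ref{lem:cont_L2_1} applied to the difference $w(t)=(\phi(t),\psi(t)):=(U_1(t)-U_2(t),V_1(t)-V_2(t))$. From \eqref{eq:flux} one reads that $w$ solves a system of the form $\partial_t w -\Delta w = F(w,t)$, with initial datum $w(0)=(u_1-u_2,v_1-v_2)$ and
\[
F_1 = -(U_1^3-U_2^3)-\beta(U_1V_1^2-U_2V_2^2)+\lambda_1 U_1-\lambda_2 U_2, \qquad F_2 \text{ symmetric in } (u,v),
\]
where $\lambda_i=\lambda(U_i,V_i)$, $\mu_i=\mu(U_i,V_i)$. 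Once we verify the integral inequality \eqref{ineq_cont_L2_1} for $F$, Lemma \ref{lem:cont_L2_1} delivers exactly the conclusion of the corollary.

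The first step is to expand each factor of $F_1\phi$ in a form that exhibits a good sign or a manageable remainder. For the cubic part we write $U_1^3-U_2^3=(U_1^2+U_1U_2+U_2^2)\phi$, so $-\int(U_1^3-U_2^3)\phi\,dx\leq 0$. For the coupling part we use $U_1V_1^2-U_2V_2^2=V_1^2\phi+U_2(V_1+V_2)\psi$, keeping $-\beta\int V_1^2\phi^2\leq 0$ and estimating the remainder by H\"older on four factors with exponents $(6,6,6,2)$:
\[
\Big|\beta\int_\O U_2(V_1+V_2)\psi\phi\,dx\Big|\leq C|U_2|_6\,|V_1+V_2|_6\,|\psi|_6\,|\phi|_2,
\]
and by Remark \ref{rem:flux_control} the first three $L^6$-norms are bounded by a constant depending only on $\max_i J_\b(u_i,v_i)$. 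Hence Young's inequality gives a bound of the type $\tfrac18\|\psi\|^2+C|\phi|_2^2$. For the Lagrange-multiplier part, we split $\lambda_1U_1-\lambda_2U_2=\lambda_2\phi+(\lambda_1-\lambda_2)U_1$; the first summand is controlled by $C|\phi|_2^2$ using the uniform bound on $\lambda_2$, while the second is estimated through Lemma \ref{lem:cont_L2_2}: $|\lambda_1-\lambda_2|\leq C(\|\phi\|+|\psi|_2)$, together with $|U_1|_2=1$, yields
\[
|(\lambda_1-\lambda_2)\int_\O U_1\phi\,dx|\leq C(\|\phi\|+|\psi|_2)|\phi|_2\leq \tfrac18\|\phi\|^2+C(|\phi|_2^2+|\psi|_2^2).
\]

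Combining these bounds and repeating the symmetric argument for $F_2\psi$, one obtains
\[
\int_\O F(w)\cdot w\,dx\leq \tfrac14\|w\|^2+C|w|_2^2\leq \tfrac12\|w\|^2+C|w|_2^2,
\]
with $C$ depending only on $\max_i J_\b(u_i,v_i)$. This is exactly hypothesis \eqref{ineq_cont_L2_1}, and Lemma \ref{lem:cont_L2_1} then produces a constant $C(t)$ with $|w(t)|_2\leq C(t)|w(0)|_2$, i.e., the desired inequality.

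I expect the main technical point to be the absorption of the coupling and Lagrange-multiplier contributions into $\tfrac12\|w\|^2+C|w|_2^2$. The key leverage is that all high-order norms of $U_i,V_i$ are controlled \emph{uniformly in $t$} (Remark \ref{rem:flux_control}) and that the $L^2$-normalization $|U_i|_2=|V_i|_2=1$ produces scale-free Lagrange multiplier estimates via Lemma \ref{lem:cont_L2_2}. Once those are in place, the estimate is a careful but routine H\"older/Young exercise.
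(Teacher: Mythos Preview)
Your argument is correct and follows the same route as the paper: reduce to Lemma \ref{lem:cont_L2_1} for the difference $w=(U_1-U_2,V_1-V_2)$, then verify \eqref{ineq_cont_L2_1} by splitting $F$ into cubic, coupling and Lagrange--multiplier pieces and estimating each via H\"older/Young, Remark \ref{rem:flux_control} and Lemma \ref{lem:cont_L2_2}. The only cosmetic difference is that you exploit the sign of $-(U_1^3-U_2^3)\phi$ and of $-\beta V_1^2\phi^2$ to drop those terms outright, whereas the paper estimates them; your version is slightly cleaner but not a different method.
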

\begin{proof}
We want to apply Lemma \ref{lem:cont_L2_1} to
$ w=(w_1,w_2)=(U_1-U_2,V_1-V_2)$. Subtracting the equations
for $(U_1,V_1)$ and $(U_2,V_2)$ we end up with a system like \eqref{eq:abtr_cont}, thus we only need to check that
\[
F=\left(
\begin{array}{c}
U_2^3-U_1^3+\b(U_2V_2^2-U_1V_1^2)+\lambda(U_1,V_1)U_1-\lambda(U_2,V_2)U_2\\
V_2^3-V_1^3+\b(U_2^2V_2-U_1^2V_1)+\mu(U_1,V_1)V_1-\mu(U_2,V_2)V_2
\end{array}
\right),
\]
satisfies \eqref{ineq_cont_L2_1}. To make the calculation easier, we split $F$ into four terms, after
adding and subtracting some suitable quantities. The first term is
\[
F_1=-\left(
\begin{array}{c}
(U_1^2+U_1U_2+U_2^2)w_1\\
(V_1^2+V_1V_2+V_2^2)w_2
\end{array}
\right),
\]
from which we obtain, by recalling Remark \ref{rem:flux_control},
\[
\begin{split}
\int_\O F_1(w)\cdot w\, dx &\leq |U_1U_2|_3 |w_1|_6 |w_1|_2 +|V_1V_2|_3 |w_2|_6 |w_2|_2\\
&\leq C (\|w_1\| |w_1|_2 +\|w_2\| |w_2|_2)\\
&\leq \frac{1}{2} \left( \|w_1\|^2 + \|w_2\|^2 \right) + C' \left( |w_1|_2^2 + |w_2|_2^2 \right)
\end{split}
\]
(where in the last step we have used Young's inequality). The second term is
\[
F_2=-\b\left(
\begin{array}{c}
U_1(V_1+V_2)w_2+V_2^2w_1\\
V_1(U_1+U_2)w_1+U_2^2w_2
\end{array}
\right),
\]
which immediately gives, reasoning in the same way as above
\[
 \begin{split}
\int_\O F_2(w)\cdot w\,dx & \leq  \b(|U_1(V_1+V_2)|_3 |w_2|_6 |w_1|_2 + |V_2^2|_3 |w_1|_6 |w_1|_2 +\\
&\qquad\qquad+
|V_1(U_1+U_2)|_3 |w_1|_6 |w_2|_2 + |U_2^2|_3 |w_2|_6 |w_2|_2)  \\
 &\leq C \left[ \|w_1\| (|w_1|_2 + |w_2|_2) + \|w_2\| (|w_1|_2 + |w_2|_2) \right] \\
&\leq \frac{1}{2}\left( \|w_1\|^2 + \|w_2\|^2 \right) + C'\left( |w_1|_2^2 + |w_2|_2^2 \right).
\end{split}
\]
The third term is
\[
F_3=\left(
\begin{array}{c}
\lambda(U_1,V_1)w_1\\
\mu(U_1,V_1)w_2
\end{array}
\right),\quad\text{from which}\quad\int_\O F_3(w)\cdot w\,dx\leq C|w|_2^2
\]
(where we used again Remark \ref{rem:flux_control}). Finally, the last term is
\[
F_4=\left(
\begin{array}{c}
(\lambda(U_1,V_1)-\lambda(U_2,V_2))U_2\\
(\mu(U_1,V_1)-\mu(U_2,V_2))V_2
\end{array}
\right),
\]
which can be ruled out by using Lemma \ref{lem:cont_L2_2}. We obtain
\[
\begin{split}
\int_\O F_4(w)\cdot w\,dx  &\leq
\left(\|U_1(t)-U_2(t)\|+|V_1(t)-V_2(t)|_2\right)\int_\O
|U_2 w_1|\,dx +\\
&\qquad+C\left(\|V_1(t)-V_2(t)\|+|U_1(t)-U_2(t)|_2\right)\int_\O
|V_2 w_2|\,dx\\
&\leq \frac{1}{2} \left( \|w_1\|^2 + \|w_2\|^2 \right) + C'\left( |w_1|_2^2 + |w_2|_2^2 \right).
\end{split}
\]
Therefore $F=F_1+F_2+F_3+F_4$ satisfies \eqref{ineq_cont_L2_1}, and hence Lemma \ref{lem:cont_L2_1} yields the desired result.
\end{proof}

\begin{proof}[Proof of Proposition \ref{prop:flux_beta_finite}]
Properties (i) and (ii) have been proved in Lemma \ref{lemma:flux_sign_preserving_beta_finite} and Corollary \ref{coro:L^2_cont_1} respectively; let us prove (iii). This is a direct consequence of the estimate on the derivative of $J_\b$ expressed in Lemma \ref{lem:weissler_exist}. In fact the following holds
\[
\begin{split}
\dist\left((U(s),V(s)),(U(t),V(t))\right) & = \left| \int_s^t \partial_\tau (U(\tau),V(\tau)) d\tau \right|_2 \\
&\leq |t-s|^{1/2} \left( \int_s^t |S_\b(U(\tau),V(\tau))|_2^2 d\tau \right)^{1/2} \\
&=|t-s|^{1/2} |J_\b(U(s),V(s))-J_\b(U(t),V(t))|^{1/2}.
\qedhere\end{split}
\]
\end{proof}

We now turn to the construction of the flux $\eta_\infty$.

\begin{proof}[Proof of Lemma \ref{lem:lambda_mu_tilde}]
By definition, $\tS$ is the projection of the gradient of $J^*$ at $w$ on the tangential space of the manifold
$\left\{w\in\spc:\,(w^+,w^-)\in\Mah\right\}$ at $w$, thus
\[
\tS(w)=w+\mathcal{L}w^3-\tl\mathcal{L}w^+ + \tmu\mathcal{L}w^-,
\]
where the coefficients $\tl,\tmu$ satisfy $\int_\O w^+\tS(w)\,dx=\int_\O w^-\tS(w)\,dx=0$, that is
\[
\left(
\begin{array}{cc}
 \int_\O w^+\invD w^+\,dx & - \int_\O w^+\invD w^-\,dx\smallskip\\
  - \int_\O w^-\invD w^+\,dx & \int_\O w^-\invD w^-\,dx
\end{array}
\right)
\left(
\begin{array}{c}
 \tl\smallskip\\
 \tmu
\end{array}
\right)
=
\left(
\begin{array}{c}
 \int_\O \left(w+\invD w^3\right)w^+\,dx\smallskip\\
 -\int_\O \left(w+\invD w^3\right)w^-\,dx
\end{array}
\right).
\]
Denoting by $A$ the coefficient matrix, we compute\footnote{By using the identity $\int_\O f\invD g\,dx=\int_\O \nabla\invD f\cdot\nabla\invD g\,dx$.}
\[
\det A = \left(\int_\O|\nabla\invD w^+|^2\,dx\right)\left(\int_\O|\nabla\invD w^-|^2\,dx\right)-
\left(\int_\O\nabla\invD w^+\cdot\nabla\invD w^-\,dx\right)^2\geq0,
\]
by H\"older inequality, and $\det A=0$ if and only if $a\nabla\invD w^+ +
b\nabla\invD w^-\equiv0$, for some $a,b$ not both zero. But this would imply that the $\spc$--function
$\invD (aw^+ + b w^-)$ would have an identically zero gradient and therefore $aw^++bw^-\equiv0$, in contradiction with the fact that, by assumption, $|aw^+ + b w^-|_2^2\geq(a^2+b^2)R_1^2$. Thus the $L^2$--continuous function $\det A$ is strictly positive on the $L^2$--compact set $\left\{w:\,|w^\pm|_2\geq R_1,\,\|w\|\leq R_2\right\}$, i.e. it is larger than a strictly positive constant (only depending on $R_1,R_2$). This provides (existence, uniqueness and)
an explicit expression of $\tl(w)$ and $\tmu(w)$ for any $w$ satisfying the previous assumptions. The regularity of these functions descends from such explicit expressions, once one notices that they are both products of Lipschitz continuous functions (and therefore bounded when $\|w\|\leq R_2$). Just as an example, we prove the Lipschitz continuity of the term $\int_\O w^+\invD w^3\,dx$. We have\footnote{Remember that, by standard elliptic regularity results, both $\invD:L^2 \to L^2$ and $\invD:L^{6/5} \to L^6$ are continuous.}
\[
\begin{split}
\left|\int_\O w_1^+\invD w_1^3\,dx-\int_\O w_2^+\invD w_2^3\,dx\right|
 &\leq \int_\O\left|w_1^+-w_2^+\right|\invD w_1^3\,dx +
  \int_\O w_2^+\left|\invD (w_1^3- w_2^3)\right|\,dx\\
 &\leq C\left|w_1^+-w_2^+\right|_2\left|w_1^3\right|_2
   +\left|w_2^+\right|_{6/5}\left|\invD(w_1^3- w_2^3)\right|_6\\
  &\leq CR_2^3 \left|w_1-w_2\right|_2+C R_2 \left|w_1^3- w_2^3\right|_{6/5}\\
&\leq C R_2^3 \left|w_1-w_2\right|_2.
\end{split}
\]
All the other terms can be treated the same way.
\end{proof}

\begin{rem}\label{rem:lip_cont_s_tild}
By reasoning as in the end of the previous proof, it can be proved that, whenever $w_1,w_2$ belong to the set
\[
\left\{w\in\spc:\,|w^\pm|_2\geq R_1,\,\|w\|\leq R_2\right\},
\]
there exists a constant $L$, only depending on $R_1,R_2$, such that
\[
\left|\tS(w_1)-\tS(w_2)\right|_2\leq L\left|w_1-w_2\right|_2,
\]
\[
\left\|\tS(w_1)-\tS(w_2)\right\|\leq L\left\|w_1-w_2\right\|.
\]
\end{rem}

\begin{proof}[Proof of Lemma \ref{lem:flux_infty_exist}]
Let us fix $0<R_1<1$ and $R_2> 2(c_\infty+1)$. By Remark \ref{rem:lip_cont_s_tild} we have that $-\tS$, as a map from $\spc$ into itself, is $H^1_0$--Lipschitz continuous on the mentioned set, with Lipschitz constant only depending on $R_1,R_2$; we infer existence (and uniqueness) of a maximal solution of the Cauchy problem, defined on $[0,T_{\max})$. Moreover, for any $t\in(0,T_{\max})$, we have
\[
\frac{d}{dt}|W^\pm(t)|_2^2=\pm 2\int_\O W^\pm W_t\,dx = \mp 2\int_\O W^\pm\tS(W)\,dx=0
\]
(by Lemma \ref{lem:lambda_mu_tilde}), and
\[
\begin{split}
\frac{d}{dt}J_\infty(W^+(t),W^-(t)) &= \frac{d}{dt}\int_{\O} \left(\frac12|\nabla W|^2+\frac14 W^4\right)\,dx=
 \int_\O \left(-\D W + W^3\right)W_t\,dx =\\
&=\int_\O -\D\left(W + \invD W^3\right) W_t\,dx = \\
&= \int_\O \nabla \left(\tS(W) + \invD(\tl W^+ -\tmu W^-) \right)\cdot \nabla \left( -\tS(W)\right)\,dx\\
& = -\|\tS(W(t))\|^2.
\end{split}
\]
Thus, for any $t\in(0,T_{\max})$, we obtain $|W^\pm(t)|_2=1>R_1$ and $\|W(t)\|\leq 2 J_\infty(W^+(t),W^-(t))\leq
2J_\infty(u,v)<R_2$. In particular this implies $T_{\max}=+\infty$, concluding the proof of the lemma.
\end{proof}
\begin{proof}[Proof of Proposition \ref{prop:flux_beta_infinite}]
(i) Consider $(u_1,v_1),(u_2,v_2) \in \Mah_\infty^{c_\infty+1}$ and let $W_1(t),W_2(t)$ be the corresponding solutions of \eqref{eq:flux_infty}. We notice first of all that Remark \ref{rem:lip_cont_s_tild} applies, providing
the existence of $L=L(c_\infty)$ such that
\[
\frac{d}{dt}\left|W_1(t)-W_2(t)\right|_2^2\leq 2L\, \left|W_1(t)-W_2(t)\right|_2^2,
\]
which implies
\[
\left|W_1(t)-W_2(t)\right|_2^2\leq e^{2Lt}\left|W_1(0)-W_2(0)\right|_2^2.
\]
Therefore
\[
\begin{split}
\dist^2((W_1^+(1),W_1^-(1)),(W_2^+(1),W_2^-(1)))&\leq  |W_1(1)-W_2(1)|_2^2\\
&\leq e^{2L}\left|W_1(0)-W_2(0)\right|_2^2\\
&\leq 2e^{2L}(|u_1-v_1|_2^2+|u_2-v_2|_2^2). \qedhere
\end{split}
\]
(ii) Notice first that
\[
\dist^2((W^+(s),W^-(s)),(W^+(t),W^-(t)))\leq \left|W(s)-W(t)\right|_2^2.
\]
Now, Lemma \ref{lem:flux_infty_exist} allows to compute
\[
\begin{split}
|W(s)-W(t)|_2 & \leq C_S \|W(s)-W(t)\|= C_S \left\|\int_s^t \partial_\tau W(\tau) d\tau\right\| \\
&\leq C_S |t-s|^{1/2} \left( \int_s^t \|\tS(W(\tau))\|^2 d\tau \right)^{1/2} \\
&=C_S |t-s|^{1/2} |J_\infty(W^+(s),W^-(s))-J_\infty(W^+(t),W^-(t))|^{1/2},
\end{split}
\]
and the two inequalities together conclude the proof.
\end{proof}

\subsection*{Acknowledgements}

Work partially supported by MIUR, Project ``Metodi
Variazionali ed Equazioni Differenziali Non Lineari''.
The second author was supported by FCT (grant SFRH/BD/28964/2006).


\begin{thebibliography}{10}

\bibitem{aftahelf}
Amandine Aftalion and Bernard Helffer.
\newblock On mathematical models for {B}ose-{E}instein condensates in optical
  lattices.
\newblock {\em Rev. Math. Phys.}, 21(2):229--278, 2009.

\bibitem{braides}
Andrea Braides.
\newblock {\em {$\Gamma$}-convergence for beginners}, volume~22 of {\em Oxford
  Lecture Series in Mathematics and its Applications}.
\newblock Oxford University Press, Oxford, 2002.

\bibitem{caflin2}
Luis Caffarelli and Fanghua Lin.
\newblock Nonlocal heat flows preserving the {$L\sp 2$} energy.
\newblock {\em Discrete Contin. Dyn. Syst.}, 23(1-2):49--64, 2009.

\bibitem{clll}
S.M. Chang, C.S. Lin, T.C. Lin, and W.W. Lin.
\newblock Segregated nodal domains of two-dimensional multispecies
  {B}ose-{E}instein condensates.
\newblock {\em Phys. D}, 196(3-4):341--361, 2004.

\bibitem{dww}
E.N. Dancer, J.C. Wei, and T.~Weth.
\newblock A priori bounds versus multiple existence of positive solutions for a
  nonlinear {S}chr\"odinger system.
\newblock {\em preprint}, 2007.

\bibitem{jerrstern}
R.L. Jerrard and P.~Sternberg.
\newblock Critical points via $\gamma$--convergence: general theory and
  applications.
\newblock {\em preprint 2009}.

\bibitem{nr}
B.~Noris and M.~Ramos.
\newblock Existence and bounds of positive solutions for a nonlinear
  schr\"odinger system.
\newblock {\em Proc. AMS}, to appear.

\bibitem{nttv1}
B.~Noris, H.~Tavares, S.~Terracini, and G.~Verzini.
\newblock Uniform h\"older bounds for nonlinear schr\"odinger systems with
  strong competition.
\newblock {\em Comm. Pure Appl. Math.}, to appear.

\bibitem{sandserfaty}
Etienne Sandier and Sylvia Serfaty.
\newblock Gamma-convergence of gradient flows with applications to
  {G}inzburg-{L}andau.
\newblock {\em Comm. Pure Appl. Math.}, 57(12):1627--1672, 2004.

\bibitem{struwe}
Michael Struwe.
\newblock {\em Variational methods}.
\newblock Springer-Verlag, Berlin, 1990.
\newblock Applications to nonlinear partial differential equations and
  Hamiltonian systems.

\bibitem{TV}
S.~Terracini and G.~Verzini.
\newblock Multipulse phases in $k$--mixtures of bose--einstein condensates.
\newblock {\em Arch. Rational Mech. Anal.}, to appear.

\bibitem{ww}
Juncheng Wei and Tobias Weth.
\newblock Radial solutions and phase separation in a system of two coupled
  {S}chr\"odinger equations.
\newblock {\em Arch. Ration. Mech. Anal.}, 190(1):83--106, 2008.

\bibitem{weissler}
Fred~B. Weissler.
\newblock Local existence and nonexistence for semilinear parabolic equations
  in {$L\sp{p}$}.
\newblock {\em Indiana Univ. Math. J.}, 29(1):79--102, 1980.

\end{thebibliography}

\noindent\verb"benedettanoris@gmail.com"\\
\verb"susanna.terracini@unimib.it"\\
Dipartimento di Matematica e Applicazioni, Universit\`a degli Studi
di Milano-Bicocca, via Bicocca degli Arcimboldi 8, 20126 Milano,
Italy

\noindent \verb"htavares@ptmat.fc.ul.pt"\\
University of Lisbon, CMAF, Faculty of Science, Av. Prof. Gama Pinto
2, 1649-003 Lisboa, Portugal

\noindent \verb"gianmaria.verzini@polimi.it"\\
Dipartimento di Matematica, Politecnico di Milano, p.za Leonardo da
Vinci 32,  20133 Milano, Italy

\end{document}